\newtheorem{theorem}{Theorem}[section]
\newtheorem{proposition}[theorem]{Proposition}
\newtheorem{lemma}[theorem]{Lemma}
\newtheorem{conjecture}[theorem]{Conjecture}
\theoremstyle{definition}
\newtheorem{definition}[theorem]{Definition}
\newtheorem{remark}[theorem]{Remark}
\theoremstyle{property}
\newtheorem{Assumption}[theorem]{Assumption}
\DeclareFontFamily{OT1}{rsfs}{}
\DeclareFontShape{OT1}{rsfs}{n}{it}{<-> rsfs10}{}
\DeclareMathAlphabet{\curly}{OT1}{rsfs}{n}{it}
\newcommand{\dR}{\mathbf{R}}
\renewcommand\L{\mathcal L}
\renewcommand\O{\mathcal O}
\newcommand\PP{\mathbb P}
\newcommand\ccL{\mathcal L}
\newcommand\C{\mathbb C}
\newcommand\FF{\mathbb F}
\newcommand\Q{\mathbb Q}
\newcommand\R{\mathbb R}
\newcommand\Z{\mathbb Z}
\newcommand\cZ{\mathcal Z}
\newcommand\CM{\mathrm{CM}}
\newcommand\mov{\mathrm{mov}}
\newcommand\Exp{\mathrm{Exp}}
\newcommand\vir{\mathrm{vir}}
\newcommand\DT{\mathrm{DT}}
\newcommand\PT{\mathrm{PT}}
\newcommand\td{\mathrm{td}}
\newcommand\rk{\operatorname{rk}}
\newcommand\tr{\operatorname{tr}}
\newcommand\ch{\operatorname{ch}}
\newcommand\Hom{\operatorname{Hom}}
\renewcommand\hom{\mathcal{H}{\it{om}}}
\newcommand\Ext{\operatorname{Ext}}
\newcommand\Hilb{\operatorname{Hilb}}
\newcommand\mdot{{\scriptscriptstyle\bullet}}
\newcommand\INTO{\ar@{^{(}->}[r]}
\DeclareRobustCommand{\SkipTocEntry}[4]{}
\title[{$K$-theoretic DT/PT for toric Calabi-Yau 4-folds}]
{$K$-theoretic DT/PT correspondence for \\ toric Calabi-Yau 4-folds}
\date{}
\author{Yalong Cao}
\address{RIKEN Interdisciplinary Theoretical and Mathematical Sciences Program (iTHEMS), 2-1, Hirosawa, Wako-shi, Saitama, 351-0198, Japan}
\email{yalong.cao@riken.jp}
\author{Martijn Kool}
\address{Mathematical Institute, Utrecht University, P.O.~Box 80010 3508 TA Utrecht, The Netherlands}
\email{m.kool1@uu.nl}
\author{Sergej Monavari}
\address{Mathematical Institute, Utrecht University, P.O.~Box 80010 3508 TA Utrecht, The Netherlands}
\email{s.monavari@uu.nl}
\begin{document}

\maketitle
\begin{abstract}
Recently, Nekrasov discovered a new ``genus'' for Hilbert schemes of points on $\C^4$. 
We extend its definition to Hilbert schemes of curves and moduli spaces of stable pairs, and
conjecture a $K$-theoretic DT/PT correspondence for toric Calabi-Yau 4-folds. 
We develop a $K$-theoretic vertex formalism, which allows us to verify our conjecture in several cases.

Taking a certain limit of the equivariant parameters, we recover the cohomological DT/PT correspondence for toric Calabi-Yau 4-folds recently 
conjectured by the first two authors. 
Another limit gives a dimensional reduction to the $K$-theoretic DT/PT correspondence for toric 3-folds conjectured by Nekrasov-Okounkov. 

As an application of our techniques, we find a conjectural formula for the generating series of $K$-theoretic stable pair invariants of $\textrm{Tot}_{\mathbb{P}^1}(\mathcal{O}(-1) \oplus \mathcal{O}(-1) \oplus \O)$. Upon dimensional reduction to the resolved conifold, we recover a formula which was recently proved by Kononov-Okounkov-Osinenko.
\end{abstract}

\vspace{-0.1cm}

\tableofcontents

\section{Introduction}

Two recent developments are Donaldson-Thomas type invariants of Calabi-Yau 4-folds (e.g.~\cite{BJ, CL1, CL2, CGJ, CK1, CK2, CKM, CMT1, CMT2, CT1, CT2, CT3, CT4, Nek, NP, OT}) and $K$-theoretic virtual invariants introduced by Nekrasov-Okounkov (e.g.~\cite{NO, O, Nek, Afg, Arb, FMR, Tho}). 
Let $X$ be a complex smooth quasi-projective variety, $\beta \in H_2(X,\Z)$, and $n \in \Z$. We consider the following moduli spaces:
\begin{itemize}
\item $I:=I_n(X,\beta)$ denotes the Hilbert scheme of proper closed subschemes $Z \subseteq X$ of dimension $\leqslant 1$ satisfying $[Z] = \beta$ and $\chi(\O_Z)=n$,
\item $P:=P_n(X,\beta)$ is the moduli space of stable pairs $(F,s)$ on $X$, where $F$ is a pure 1-dimensional sheaf on $X$ with proper scheme theoretic support in class $\beta$, $\chi(F) = n$, and $s \in H^0(F)$ has 0-dimensional cokernel. 
\end{itemize} 

For proper Calabi-Yau 3-folds, both spaces have a symmetric perfect obstruction theory. The degrees of the virtual classes are known as (rank one) 
Donaldson-Thomas and Pandharipande-Thomas invariants. Their generating series are related by the famous DT/PT correspondence conjectured by Pandharipande-Thomas \cite{PT1} and proved by Bridgeland \cite{Bri} and Toda \cite{Tod}.

For proper Calabi-Yau 4-folds, $I$ and $P$ still have an obstruction theory. Denote the universal objects by $\cZ \subseteq I \times X$ and $\mathbb{I}^{\mdot} = \{\O_{P \times X} \rightarrow \FF\}$. Then
\begin{align*}
T_I^{\vir} = \dR \hom_{\pi_I}(I_{\mathcal{Z}}, I_{\mathcal{Z}})_0[1], \quad T_P^{\vir} = \dR\hom_{\pi_P}(\mathbb{I}^{\mdot}, \mathbb{I}^{\mdot})_0[1],
\end{align*}
where $(\cdot)_0$ denotes trace-free part and $\dR \hom_\pi := \dR \pi_* \circ \dR\hom$. These obstruction theories are \emph{not perfect}, so the machineries of Behrend-Fantechi \cite{BF} and Li-Tian \cite{LT} do not produce virtual classes on the moduli spaces. Nonetheless, there exist virtual classes $[I]^{\vir}_{o(\ccL)} \in H_{2n}(I,\mathbb{Z})$, $[P]^{\vir}_{o(\ccL)} \in H_{2n}(P,\mathbb{Z})$ (ref. \cite{CMT2, CK2}) in the sense of Borisov-Joyce \cite{BJ}, which involves derived algebraic geometry \cite{PTVV} and derived differential geometry. These virtual classes depend on the choice of an orientation $o(\ccL)$, i.e.~the choice of a square root of the isomorphism
$$
Q : \ccL \otimes \ccL \rightarrow \O, \quad \mathcal{L} := \det \dR \hom_{\pi}(\mathbb{E},\mathbb{E})
$$
induced by the Serre duality pairing (here $\mathbb{E}=I_{\cZ}$ or $\mathbb{I}^{\mdot}$ respectively).

\subsection{Nekrasov genus} 

In this paper, $X$ is a toric Calabi-Yau 4-fold\,\footnote{I.e.~a smooth quasi-projective toric 4-fold $X$ satisfying $K_X \cong \O_X$, $H^{>0}(\O_X) = 0$, and such that every cone of its fan is contained in a 4-dimensional cone.}. 
Since $X$ is non-proper, the moduli spaces $I, P$ are \emph{in general} non-proper and we define invariants by a localization formula. There are interesting cases for which $P$ is proper, e.g.~when 
$X = \mathrm{Tot}_{\PP^2}(\O(-1) \oplus \O(-2)), \mathrm{Tot}_{\PP^1 \times \PP^1}(\O(-1,-1) \oplus \O(-1,-1))$. Denote by $(\C^*)^4$ the dense open torus of $X$ and let $T \subseteq (\C^*)^4$ be the 3-dimensional subtorus preserving the Calabi-Yau volume form. Then 
$$
 I^T =  I^{(\C^*)^4},
$$
which consists of finitely many isolated reduced points \cite[Lem.~2.2]{CK2}. Roughly speaking, these are described by solid partitions (4D piles of boxes) corresponding to monomial ideals in each toric chart $U_\alpha \cong \C^4$ with infinite ``legs'' along the coordinate axes, which agree on overlaps $U_\alpha \cap U_\beta$. In general, the fixed locus $P^{(\C^*)^4}$ may not be isolated \cite{CK2}. Throughout this paper, whenever we consider a moduli space $P$ of stable pairs, we assume:

\begin{Assumption}\label{assumption}
$X$ is a toric Calabi-Yau 4-fold and $\beta \in H_2(X,\Z)$ such that $\bigcup_n P_n(X,\beta)^{(\C^*)^4}$ is at most 0-dimensional. 
\end{Assumption}
 
 When Assumption \ref{assumption} is satisfied, $P_n(X,\beta)^{T} = P_n(X,\beta)^{(\C^*)^4}$ for all $n$ and it consists of finitely many reduced points, which are combinatorially described in \cite[Sect.~2.2]{CK2}. 
 
 This assumption is equivalent to saying that in each toric chart $U_\alpha$ at most two infinite legs come together (Lemma \ref{fewlegs}). This is the case when $X$ is a local curve or local surface. If in each toric chart $U_\alpha$ at most three legs come together, $P^{(\C^*)^4}$ is isomorphic to a disjoint union of products of $\PP^1$'s (essentially by \cite{PT2}). This is the case when $X$ is a local threefold. In full generality, four infinite legs can come together in each toric chart $U_\alpha$. Then $P^{(\C^*)^4}$ is considerably more complicated; its connected components are cut out by incidence conditions from ambient spaces of the form $\mathrm{Gr}(1,2)^\ell \times \mathrm{Gr}(1,3)^m \times \mathrm{Gr}(2,3)^n$. In order to avoid moduli, we focus on the isolated case, though we expect the results of this paper can be generalized to the general setting.

At any fixed point $x=Z \in I^T$ or $x=[(F,s)] \in P^T$, $T$-equivariant Serre duality implies that the $T$-equivariant $K$-theory classes
$$
T_I^{\vir}|_x, \quad T_P^{\vir}|_x \in K_0^T(\mathrm{pt}) = \Z[t^{\pm 1}_1,t^{\pm 1}_2,t^{\pm 1}_3,t^{\pm 1}_4] / (t_1t_2t_3t_4-1)
$$
have square roots. Namely~there exist
$
\sqrt{T_I^{\vir}|_x}, \, \sqrt{T_P^{\vir}|_x} \in K_0^T(\mathrm{pt})
$
such that
\begin{align*}
T_I^{\vir}|_x = \sqrt{T_I^{\vir}|_x} + \overline{\sqrt{T_I^{\vir}|_x}}, 
\end{align*}
and similarly for $P$, where $\overline{(\cdot)}$ denotes the involution on $K_0^T(\mathrm{pt})$ induced by $\Z$-linearly extending the map $t_1^{w_1}t_2^{w_2}t_3^{w_3}t_4^{w_4} \mapsto t_1^{-w_1}t_2^{-w_2}t_3^{-w_3}t_4^{-w_4}$. These square roots are \emph{non-unique}. In what follows, we use the following notation
\begin{align*}
\Omega^{\vir}_I|_x :=  (T_I^{\vir}|_x)^{\vee}, \quad \quad K^{\vir}_I|_x :=  \det \Omega_I^{\vir}|_x, \quad \mathrm{Ob}_I|_x := h^1(T_I^{\vir}|_x).
\end{align*}
We denote the $T$-moving and $T$-fixed parts by
$$
N^{\vir}|_{x}= (T_I^{\vir}|_{x})^{\mov}, \quad (T_I^{\vir}|_{x})^{f}.
$$
We use similar notations for the stable pairs case. A choice of a square root of $T_I^{\vir}|_x$, $T_P^{\vir}|_x$ induces a square root $\sqrt{E}$ for each of the above complexes $E$.  

For any $T$-equivariant line bundle $L$ on $X$, we define 
\begin{equation} \label{taut}
L^{[n]} := \textbf{R}\pi_{I*} (\pi_X^* L \otimes \O_{\mathcal{Z}}), \quad \textbf{R}\pi_{P*} (\pi_X^* L \otimes \FF)
\end{equation}
on the moduli spaces $I$ and $P$. 
Here $\pi_X$, $\pi_I$ (resp.~$\pi_P$) are projections from $X\times I$ (resp.~$X\times P$) to the corresponding factor.
Moreover, for a $T$-equivariant vector bundle $E$ on any scheme $M$ with $T$-action, we define 
$$
\Lambda_\tau E = \sum_{i \geqslant 0} [\Lambda^i E] \, \tau^i \in K^0_T(M)[\tau], \quad \mathrm{Sym}_\tau E = \sum_{i \geqslant 0} [\mathrm{Sym}^i E] \, \tau^i  \in K^0_T(M)[[\tau]],
$$
where $K^0_T(M)$ denotes the $K$-group of $T$-equivariant locally free sheaves on $M$. This can be extended to $K^0_T(M)$ by $\Lambda_\tau([E]-[F]) := \Lambda_\tau(E) \cdot \mathrm{Sym}_{-\tau}(F)$ \cite[Sect.~4]{FG}.
Following Nekrasov \cite{Nek}, which deals with the case $I_n(\C^4,0)$, we define:
\begin{definition} \label{Nekgen}
We define the following ``Nekrasov genus'' of the moduli space $I := I_{n}(X,\beta)$. Consider an extra trivial $\C^*$-action on $X$ and let $\O \otimes y$ be the trivial line bundle with non-trivial $\C^*$-equivariant 
structure corresponding to primitive character $y$.
For any $T$-equivariant line bundle $L$ on $X$, we define
{\footnotesize{\begin{align*}
&I_{n,\beta}(L, y) := \chi\Big(I, \widehat{\O}^{\vir}_I \otimes \frac{\Lambda^{\mdot} (L^{[n]} \otimes y^{-1})} {(\det(     L^{[n]} \otimes y^{-1} ))^{\frac{1}{2}}  } \Big) 
:= \chi \Big(I^{T}, \frac{ \O^{\vir}_{I^T} \otimes \sqrt{K_I^{\vir}}^{\frac{1}{2}}|_{I^T} }{\Lambda^{\mdot} \sqrt{N^{\vir}}^{\vee}} \otimes \frac{\Lambda^{\mdot} (L^{[n]} \otimes y^{-1})} {(\det(     L^{[n]} \otimes y^{-1} ))^{\frac{1}{2}}  }  \Big) \\
&:=\sum_{Z \in I^{T}} (-1)^{o(\L)|_Z} e\left(\sqrt{\mathrm{Ob}_I|_Z}^f\right)  \frac{\ch\left(\sqrt{K_I^{\vir}|_{Z}}^{\frac{1}{2}}\right)}{\ch\left(\Lambda^{\mdot} \sqrt{N^{\vir}|_{Z}}^{\vee}\right)}   \frac{\ch(\Lambda^{\mdot} (L^{[n]}|_{Z} \otimes y^{-1}))}{\ch((\det(L^{[n]} |_{Z} \otimes y^{-1}))^{\frac{1}{2}})} \td\left(\sqrt{T_I^{\vir}|_{Z}}^{f}\right),
\end{align*}}}
${}$ \\
where $\Lambda^\mdot(\cdot) = \Lambda_{-1}(\cdot)$. Here the first two lines are suggestive notations and the third line is the actual definition. This definition depends on the choice of a sign $(-1)^{o(\ccL)|_Z}$ for each $Z \in I^T$. We suppress this dependence from the notation. All Chern characters $\ch(\cdot)$, the Euler class $e(\cdot)$, and Todd class $\td(\cdot)$ in this formula are $T \times \C^*$-equivariant ($T$ is the Calabi-Yau torus and $\C^*$ the trivial torus) and the invariant takes value in 
$\frac{\Q(t_1^{\frac{1}{2}},t_2^{\frac{1}{2}},t_3^{\frac{1}{2}},t_4^{\frac{1}{2}},y^{\frac{1}{2}})}{(t_1t_2t_3t_4-1)}$.
Different choices of square root of $T_I^{\vir}|_Z$ only change the contribution of $Z$ to the invariant by a sign, so this gets absorbed in the choice of sign $(-1)^{o(\ccL)|_Z}$.\footnote{When developing the vertex formalism in Section \ref{sec:taking roots}, we make an explicit choice of square root for each $T_I^{\vir}|_Z$ and $T_P^{\vir}|_Z$.}
We define $P_{n,\beta}(L, y)$ analogously replacing $I$ by $P$ and imposing Assumption \ref{assumption}. 
\end{definition}

\begin{remark}
When the the first version of this paper became public, the virtual structure sheaf and $K$-theoretic localization formula were not established yet in the setting of Calabi-Yau 4-folds. As a consequence, we \emph{defined} our invariants $I_{n,\beta}(L, y)$ (and $P_{n,\beta}(L, y)$) by the (expected) virtual localization formula as in Definition \ref{Nekgen}. Recently, the virtual structure sheaf and $K$-theoretic localization formula have been established by J.~Oh and R.~P.~Thomas \cite{OT} thereby vindicating the calculations in this paper. 
\end{remark}

\begin{remark}
For any $Z \in I^T := I_n(X,\beta)^T$, $T_I^{\vir}|_Z$ does not contain any $T$-fixed terms with positive coefficient (Lemma \ref{TfixlocusDT}). Therefore
\begin{align*}
I_{n,\beta}(L, y) =\sum_{Z \in I^{T}} (-1)^{o(\mathcal{L})|_Z} \frac{\ch\left(\sqrt{K_I^{\vir}|_{Z}}^{\frac{1}{2}}\right)}{\ch\left(\Lambda^{\mdot} \sqrt{T_I^{\vir}|_{Z}}^{\vee}\right)}   \frac{\ch(\Lambda^{\mdot} (L^{[n]}|_{Z} \otimes y^{-1}))}{\ch((\det(L^{[n]} |_{Z} \otimes y^{-1}))^{\frac{1}{2}})}.
\end{align*}
When $T_I^{\vir}|_Z$ does not contain $T$-fixed terms with negative coefficient, this equality is clear since $T_I^{\vir}|_Z = N^{\vir}|_Z$ and $(T_I^{\vir}|_Z)^f=0$. When $T_I^{\vir}|_Z$ contains a $T$-fixed term with negative coefficient, both LHS and RHS are zero (since $e\big(\sqrt{\mathrm{Ob}_I |_Z}^f\big) = 0$). A similar statement holds in the stable pairs case (where we require Assumption \ref{assumption} and use Lemma \ref{PTfixedlocus}).
\end{remark}

\subsection{$K$-theoretic DT/PT correspondence}  

We show in Section \ref{vertex} that the invariants $I_{n,\beta}(L, y)$ and $P_{n,\beta}(L, y)$ can be calculated by a $K$-theoretic vertex formalism. The case $I_{n,0}(L, y)$ was originally established by Nekrasov \cite{Nek} and Nekrasov-Piazzalunga \cite{NP}, who also deal with the higher rank case. 
The case $I_{n,\beta}(L, y)$ is recently independently established in \cite{NP2}, who also deal with ideal sheaves of surfaces and the higher rank case (see Subsection \ref{otherwork} below). 
Our focus is on the $K$-theoretic DT/PT correspondence for toric Calabi-Yau 4-folds. In Section \ref{vertex}, we define the $K$-theoretic DT/PT 4-fold vertex\footnote{A priori the powers of $t_1,t_2,t_3,t_4$ in $\mathsf{V}_{\lambda\mu\nu\rho}^{\DT}(t,y,q)$, $\mathsf{V}_{\lambda\mu\nu\rho}^{\PT}(t,y,q)$ are half-integers. We prove in Proposition \ref{integerpowers} that they are always integers.} 
$$
\mathsf{V}_{\lambda\mu\nu\rho}^{\DT}(t,y,q) , \quad \mathsf{V}_{\lambda\mu\nu\rho}^{\PT}(t,y,q) \in \frac{\Q(t_1,t_2,t_3,t_4,y^{\frac{1}{2}})}{(t_1t_2t_3t_4-1)}(\!(q)\!),
$$
for any finite plane partitions (3D partitions) $\lambda, \mu, \nu, \rho$. In the stable pairs case, we require that at most two of $\lambda, \mu, \nu, \rho$ are non-empty (which follows from Assumption \ref{assumption} by Lemma \ref{fewlegs}). Roughly speaking, these are the generating series of $I_{n,\beta}(L, y)$, $P_{n,\beta}(L, y)$ in the case $X = \C^4$, $L = \O_{\C^4}$, and the underlying Cohen-Macaulay support curve is fixed and described by finite asymptotic plane partitions $\lambda, \mu, \nu, \rho$ (see Definition \ref{vertexdef}). The series $\mathsf{V}_{\lambda\mu\nu\rho}^{\DT}$, $\mathsf{V}_{\lambda\mu\nu\rho}^{\PT}$ depend on the choice of a sign at each $T$-fixed point. 

Before we phrase our DT/PT vertex correspondence, we discuss a beautiful conjecture by Nekrasov for $\mathsf{V}_{\varnothing\varnothing\varnothing\varnothing}^{\DT}$ \cite{Nek, NP}. We recall the definition of the plethystic exponential. For any formal power series $f(p_1, \ldots, p_r; q_1, \ldots, q_s)$ in $\Q(p_1, \ldots, p_r)[\![q_1, \ldots, q_s]\!]$,
its plethystic exponential is defined by
\begin{align*} 
\Exp(f(p_1, \ldots, p_r;q_1, \ldots, q_s)) &:= \exp\Big( \sum_{n=1}^{\infty} \frac{1}{n} f(p_1^n, \ldots, p_r^n;q_1^n, \ldots, q_s^n) \Big)
\end{align*}
viewed as an element of $\Q(p_1, \ldots, p_r)[\![q_1, \ldots, q_s]\!]$. Following Nekrasov \cite{Nek}, for any formal variable $x$, we define
\begin{equation*} 
[x] := x^{\frac{1}{2}} - x^{-\frac{1}{2}}.
\end{equation*}
\begin{conjecture}[Nekrasov] \label{Nekconj}
There exist unique choices of signs such that
\begin{align*}
\mathsf{V}_{\varnothing\varnothing\varnothing\varnothing}^{\DT}(t,y,q) = \Exp(\mathcal{F}(t,y;q)), \quad \mathcal{F}(t,y;q):= \frac{[t_1t_2][t_1t_3][t_2t_3][y]}{[t_1][t_2][t_3][t_4][y^{\frac{1}{2}}q][y^{\frac{1}{2}}q^{-1}]}, 
\end{align*}
where $\mathcal{F}(t,y;q) \in \frac{\Q(t_1,t_2,t_3,t_4,y^{\frac{1}{2}},q)}{(t_1t_2t_3t_4-1)}$ is expanded as a formal power series in $q$.
\end{conjecture}
See \cite{Nek} for the existence part. Here we conjecture the uniqueness part. 
We propose the following $K$-theoretic DT/PT 4-fold vertex correspondence:
\begin{conjecture}\label{K-conj intro}
For any finite plane partitions $\lambda, \mu, \nu, \rho$, at most two of which are non-empty, there are choices of signs such that 
$$
\mathsf{V}_{\lambda\mu\nu\rho}^{\DT}(t,y,q) = \mathsf{V}_{\lambda\mu\nu\rho}^{\PT}(t,y,q) \, \mathsf{V}_{\varnothing\varnothing\varnothing\varnothing}^{\DT}(t,y,q).
$$
Suppose we choose the signs for $\mathsf{V}_{\varnothing\varnothing\varnothing\varnothing}^{\DT}(t,y,q)$ equal to the unique signs in Nekrasov's conjecture \ref{Nekconj}. Then, at each order in $q$, the choice of signs for which LHS and RHS agree is unique up to an overall sign.
\end{conjecture}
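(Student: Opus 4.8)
The plan is to reduce Conjecture~\ref{K-conj intro} to a purely local identity of $\C^4$ vertices and then to prove that identity by combining a combinatorial decomposition of the torus-fixed loci with an additivity property of the (square-rooted) virtual tangent spaces. By the toric gluing established in Section~\ref{vertex}, the invariants $I_{n,\beta}(L,y)$ and $P_{n,\beta}(L,y)$ of an arbitrary toric Calabi--Yau $4$-fold are assembled from edge and vertex contributions, the local contribution at each maximal cone being $\mathsf{V}^{\DT}_{\lambda\mu\nu\rho}$, resp.~$\mathsf{V}^{\PT}_{\lambda\mu\nu\rho}$, so it suffices to prove
\[
\mathsf{V}_{\lambda\mu\nu\rho}^{\DT}(t,y,q) = \mathsf{V}_{\lambda\mu\nu\rho}^{\PT}(t,y,q)\,\mathsf{V}_{\varnothing\varnothing\varnothing\varnothing}^{\DT}(t,y,q)
\]
for any $\lambda,\mu,\nu,\rho$ with at most two non-empty. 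Since there is no Hall-algebra or wall-crossing technology available for Calabi--Yau $4$-folds, I would not aim for a conceptual proof in the style of Bridgeland and Toda; instead the target is an order-by-order argument in $q$ built on the structural input below, supplemented by closed-form verification in those cases that dimensionally reduce to known $3$-fold statements.

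\textbf{Combinatorial decomposition and additivity of the integrand.} A $T$-fixed ideal sheaf in $I_n(\C^4,\beta)$ is a monomial ideal whose solid partition is asymptotic to $\lambda,\mu,\nu,\rho$; it factors canonically through its Cohen--Macaulay hull, with finite punctual remainder near the vertex. Under Assumption~\ref{assumption} (Lemma~\ref{fewlegs}) the stable-pair fixed points are isolated and consist of that Cohen--Macaulay curve decorated by a bounded $0$-dimensional cokernel. The combinatorial core is that the DT fixed points are obtained from the PT ones by further decorating with an arbitrary finite solid partition at the vertex, so that the DT generating series factors, fixed point by fixed point, as the PT series times $\mathsf{V}^{\DT}_{\varnothing\varnothing\varnothing\varnothing}$. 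To promote this to the $K$-theoretic invariant I would show that, on the level of $T$-characters, the vertex character at a matched DT fixed point $Z$ decomposes — modulo cross-terms that re-absorb — as the vertex character at the corresponding PT fixed point $Z'$ plus that of the decorating box configuration $\pi^{\mathrm{box}}$, hence, using the explicit square roots of Section~\ref{sec:taking roots}, $\sqrt{T_I^{\vir}|_Z} = \sqrt{T_P^{\vir}|_{Z'}} + \sqrt{T_{I_n(\C^4,0)}^{\vir}|_{\pi^{\mathrm{box}}}}$ up to the antidiagonal. Since $L=\O$ in the vertex, the tautological insertion $\Lambda^{\mdot}(L^{[n]}\otimes y^{-1})/(\det(L^{[n]}\otimes y^{-1}))^{\frac12}$ is multiplicative under the decomposition — this is the source of the factor $[y^{\frac12}q][y^{\frac12}q^{-1}]$ appearing in $\mathcal{F}$ — and $\widehat{\O}^{\vir}$ together with the localization factor $\Lambda^{\mdot}\sqrt{N^{\vir}}^{\vee}$ is multiplicative over products. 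Summing over the product of fixed loci then yields the product of generating series, which is the claimed identity up to signs.

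\textbf{Signs and uniqueness.} The virtual structure sheaf depends on a choice of orientation, i.e.~a sign at each fixed point. Taking Nekrasov's canonical signs for $\mathsf{V}^{\DT}_{\varnothing\varnothing\varnothing\varnothing}$ (Conjecture~\ref{Nekconj}), one must produce, for each $\lambda,\mu,\nu,\rho$, signs for $\mathsf{V}^{\DT}_{\lambda\mu\nu\rho}$ and $\mathsf{V}^{\PT}_{\lambda\mu\nu\rho}$ that are multiplicative under the decomposition above; existence follows from the additivity of $\sqrt{T^{\vir}}$ and the compatibility of orientations with direct sums. For the uniqueness up to an overall sign I would argue by induction on the $q$-order: at the minimal order at which a stable-pair fixed point $Z'$ first contributes, its integrand is a nonzero element of $\Q(t_1,t_2,t_3,t_4,y^{\frac12})/(t_1t_2t_3t_4-1)$ whose pole structure in the $t_i$ and $y$ cannot be reproduced by any signed combination of the integrands of the remaining fixed points of that order; this pins the sign of $Z'$ relative to the others, and the induction propagates the constraint to all orders. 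Running the same bookkeeping on the DT side shows that once the signs of $\mathsf{V}^{\DT}_{\varnothing\varnothing\varnothing\varnothing}$ are fixed the identity determines those of $\mathsf{V}^{\DT}_{\lambda\mu\nu\rho}$ and $\mathsf{V}^{\PT}_{\lambda\mu\nu\rho}$ up to a single global sign.

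\textbf{The main difficulty.} The genuinely hard step is the character additivity for general asymptotics: the square-root operation on $K$-theory classes is non-canonical, and the combinatorics of solid partitions lacks the product structure enjoyed by plane partitions, so I do not expect a closed-form proof of the decomposition $\sqrt{T_I^{\vir}|_Z} = \sqrt{T_P^{\vir}|_{Z'}} + \sqrt{T_{I_n(\C^4,0)}^{\vir}|_{\pi^{\mathrm{box}}}}$ — nor of the global sign compatibility — valid simultaneously for all $\lambda,\mu,\nu,\rho$. The realistic deliverable is therefore a verification of the vertex identity modulo $q^N$ for each $N$ and for small $\lambda,\mu,\nu,\rho$, supplemented by closed-form proofs in the cases that reduce, via the equivariant limits discussed in the introduction, to the cohomological DT/PT correspondence and to the Nekrasov--Okounkov $K$-theoretic DT/PT correspondence for $3$-folds (the latter established in cases by Kononov--Okounkov--Osinenko), which serve as independent checks on the sign bookkeeping.
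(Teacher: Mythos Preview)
The statement is a \emph{conjecture}; the paper does not prove it. What the paper actually supplies is computational verification in a finite list of small cases (Proposition~\ref{verif}: $|\lambda|+|\mu|+|\nu|+|\rho|\leqslant 4$ and few embedded boxes, checked order-by-order in $q$), together with an expected closed sign rule (Remark~\ref{sign expec}) and consequences drawn under the assumption that the conjecture holds. Your final paragraph lands in the same place --- verify modulo $q^N$ for small legs --- and that part is consistent with what the paper does.

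The substantive gap is in your middle section. You assert that the $T$-fixed points on the DT side are obtained from those on the PT side by ``further decorating with an arbitrary finite solid partition at the vertex,'' so that the factorization holds ``fixed point by fixed point,'' and then that the square-rooted virtual tangent character splits additively as $\sqrt{T_I^{\vir}|_Z}=\sqrt{T_P^{\vir}|_{Z'}}+\sqrt{T^{\vir}_{I_n(\C^4,0)}|_{\pi^{\mathrm{box}}}}$. Neither claim is correct, already in the $3$-fold case. A curve-like solid partition with prescribed asymptotics is \emph{not} the data of a stable-pair box configuration together with an independent point-like solid partition: the embedded structure interacts with the legs, there is no such bijection of fixed loci, and the quadratic term $\overline{P}_{123}Z_\alpha\overline{Z}_\alpha$ in $\mathsf{v}_\alpha$ produces genuine cross-terms between the Cohen--Macaulay piece and the embedded piece that do not ``re-absorb'' into either factor. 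If such a pointwise additive decomposition existed, the $3$-fold DT/PT vertex correspondence would be trivial rather than requiring Hall-algebra or wall-crossing methods; the same obstruction is present here. The identity of Conjecture~\ref{K-conj intro} is a statement about generating series after summing over all fixed points with correctly chosen signs, and it genuinely does not reduce to a term-by-term matching.

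Your uniqueness argument (pole-structure separation of fixed-point integrands at each $q$-order) is plausible as a heuristic, but note that the paper does not attempt a general proof of uniqueness either: uniqueness is asserted as part of the conjecture and checked in the same finite list of cases.
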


We verify this conjecture in various cases for which $|\lambda| + |\mu| + |\nu| + |\rho| \leqslant 4$ and the number of embedded boxes is $\leqslant 3$ (for the precise statement, see Proposition \ref{verif}. This conjecture and the vertex formalism imply the following:
\begin{theorem} \label{globaltoricKDTPT}
Assume Conjecture \ref{K-conj intro} holds. Let $X$ be a toric Calabi-Yau 4-fold and $\beta \in H_2(X,\Z)$ such that $\bigcup_n P_n(X,\beta)^{(\C^*)^4}$ is at most 0-dimensional. Let $L$ be a $T$-equivariant line bundle on $X$. Then there exist choices of signs such that
$$\frac{\sum_{n} I_{n,\beta}(L, y)\, q^n}{\sum_{n} I_{n,0}(L, y)\, q^n}=\sum_{n} P_{n,\beta}(L, y)\, q^n.$$
\end{theorem}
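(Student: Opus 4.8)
The plan is to deduce the identity from the $K$-theoretic vertex formalism established in Section~\ref{vertex} by a gluing argument, mimicking the way the degree-zero series factors out of a toric Donaldson--Thomas partition function.

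First I would recall the shape of that formalism. Write $\{p_\alpha\}$ for the $T$-fixed points of $X$ (one per maximal cone, hence finitely many), with chart $U_\alpha\cong\C^4$ and tangent weights $t^{(\alpha)}=(t^{(\alpha)}_1,\dots,t^{(\alpha)}_4)$, and $\{C_e\}$ for the $T$-invariant curves; since $Z$ (resp.\ the support of $F$) must be proper, only the compact edges $C_e\cong\PP^1$ occur. A $T$-fixed point of $\bigsqcup_n I_n(X,\beta)$ is encoded by a finite plane partition $\pi_e$ along each compact edge $e$ --- the transverse monomial ideal cut out of the Cohen--Macaulay support curve --- together with a solid partition at each $p_\alpha$ whose four legs are the adjacent $\pi_e$'s, subject to $\sum_e|\pi_e|\,[C_e]=\beta$; there are only finitely many such edge assignments $\gamma$, and the remaining ``finite box'' data is packaged into a variable $q$ recording the holomorphic Euler characteristic. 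The formalism of Section~\ref{vertex} then yields
\begin{align*}
\sum_n I_{n,\beta}(L,y)\,q^n=\sum_{\gamma}\ \prod_{e}\mathsf{E}_e(\gamma)\ \prod_{\alpha}\mathsf{V}^{\DT}_{\lambda_\alpha\mu_\alpha\nu_\alpha\rho_\alpha}\!\big(t^{(\alpha)},y^{(\alpha)},q\big),
\end{align*}
where $(\lambda_\alpha,\mu_\alpha,\nu_\alpha,\rho_\alpha)$ are the four legs at $p_\alpha$ prescribed by $\gamma$, and $y^{(\alpha)}$ is $y$ multiplied by the $T$-character of $L|_{p_\alpha}$ --- the only way the line bundle $L$ enters a vertex, since locally $L^{[n]}$ is that character tensored with the structure sheaf. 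The \emph{same} formula holds for $\sum_n P_{n,\beta}(L,y)\,q^n$ with $\mathsf{V}^{\DT}$ replaced by $\mathsf{V}^{\PT}$ and with the \emph{same} edge factors $\mathsf{E}_e(\gamma)$, because an ideal sheaf and a stable pair restricted to a neighbourhood of the interior of $C_e$ carry identical data. Taking $\beta=0$ there are no edges and only $0$-dimensional $T$-fixed subschemes supported on $\{p_\alpha\}$, so
\begin{align*}
\sum_n I_{n,0}(L,y)\,q^n=\prod_{\alpha}\mathsf{V}^{\DT}_{\varnothing\varnothing\varnothing\varnothing}\!\big(t^{(\alpha)},y^{(\alpha)},q\big),
\end{align*}
a power series with constant term $1$, so the quotient in the theorem makes sense.

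Next I would feed Conjecture~\ref{K-conj intro} into each vertex of the $\beta$-series. By Assumption~\ref{assumption} and Lemma~\ref{fewlegs}, at each $p_\alpha$ at most two of $\lambda_\alpha,\mu_\alpha,\nu_\alpha,\rho_\alpha$ are non-empty, so the conjecture applies after substituting $y\rightsquigarrow y^{(\alpha)}$ (it is an identity of rational functions, stable under this substitution) and gives $\mathsf{V}^{\DT}_{\lambda_\alpha\mu_\alpha\nu_\alpha\rho_\alpha}=\mathsf{V}^{\PT}_{\lambda_\alpha\mu_\alpha\nu_\alpha\rho_\alpha}\cdot\mathsf{V}^{\DT}_{\varnothing\varnothing\varnothing\varnothing}$ at $(t^{(\alpha)},y^{(\alpha)},q)$. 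Substituting, and using that $\prod_{\alpha}\mathsf{V}^{\DT}_{\varnothing\varnothing\varnothing\varnothing}(t^{(\alpha)},y^{(\alpha)},q)$ does not depend on the edge assignment $\gamma$,
\begin{align*}
\sum_n I_{n,\beta}(L,y)\,q^n&=\sum_{\gamma}\prod_{e}\mathsf{E}_e(\gamma)\prod_{\alpha}\Big(\mathsf{V}^{\PT}_{\lambda_\alpha\mu_\alpha\nu_\alpha\rho_\alpha}\!\cdot\mathsf{V}^{\DT}_{\varnothing\varnothing\varnothing\varnothing}\Big)\!\big(t^{(\alpha)},y^{(\alpha)},q\big)\\
&=\Big(\prod_{\alpha}\mathsf{V}^{\DT}_{\varnothing\varnothing\varnothing\varnothing}\!\big(t^{(\alpha)},y^{(\alpha)},q\big)\Big)\sum_{\gamma}\prod_{e}\mathsf{E}_e(\gamma)\prod_{\alpha}\mathsf{V}^{\PT}_{\lambda_\alpha\mu_\alpha\nu_\alpha\rho_\alpha}\!\big(t^{(\alpha)},y^{(\alpha)},q\big).
\end{align*}
By the two displays above the first factor is $\sum_n I_{n,0}(L,y)\,q^n$ and the remaining sum is $\sum_n P_{n,\beta}(L,y)\,q^n$; dividing gives the theorem.

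It remains to arrange the signs. The three series each carry a sign $(-1)^{o(\ccL)|_x}$ at every $T$-fixed point $x$, which factors through the local data at the vertices $p_\alpha$ occurring in $x$. I would fix them by taking, for an empty vertex, the unique signs of Conjecture~\ref{Nekconj}, and for each other local leg-configuration the signs for which Conjecture~\ref{K-conj intro} holds; the leftover per-vertex sign ambiguity (``unique up to an overall sign at each order in $q$'') is absorbed into the signs on the $P$-side, consistently because the contribution of $x$ is a product over its vertices. The step I expect to require the most care --- and which is genuinely the content of Section~\ref{vertex} rather than of this deduction --- is the gluing formula with edge factors \emph{common} to the two theories: one must check that passing from ideal sheaves to stable pairs changes nothing along the edges, so that after the vertex correspondence the \emph{entire} excess of the DT series over the PT series is the edge-independent product $\prod_{\alpha}\mathsf{V}^{\DT}_{\varnothing\varnothing\varnothing\varnothing}$. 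Concretely, the delicate bookkeeping is that the line-bundle twist $y\mapsto y^{(\alpha)}$ and the difference between the $q$-gradings by $\chi(\O_Z)$ and by $\chi(F)$ enter identically on both sides.
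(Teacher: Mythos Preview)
Your proposal is correct and follows essentially the same approach as the paper's own proof: express both generating series via the vertex formalism of Theorem~\ref{vertexthm}, use that the edge contributions coincide on the DT and PT sides, apply Conjecture~\ref{K-conj intro} vertex by vertex (invoking Lemma~\ref{fewlegs} to ensure the two-leg hypothesis), and factor out the $\gamma$-independent product $\prod_\alpha \mathsf{V}^{\DT}_{\varnothing\varnothing\varnothing\varnothing}$, which is exactly $\sum_n I_{n,0}(L,y)\,q^n$. The paper carries this out explicitly for $X=\mathrm{Tot}_{\PP^2}(\O(-1)\oplus\O(-2))$ and says the general case is similar; your write-up is in fact more careful than the paper about the sign choices and the role of the substitution $y\mapsto y^{(\alpha)}$.
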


One may wonder whether there are other $K$-theoretic insertions for which a DT/PT correspondence similar to Conjecture \ref{K-conj intro} holds. The most natural candidates are virtual holomorphic Euler characteristics $\chi(I, \widehat{\O}^{\vir}_I)$, $\chi(P, \widehat{\O}^{\vir}_P)$, or replacing $L$ in Definition \ref{Nekgen} by a higher rank vector bundle (or even $K$-theory classes of negative rank). However, we have \emph{not} found any other $K$-theoretic insertions that work and we believe that the insertion of Definition \ref{Nekgen} is special (see Remark \ref{rmk on other insertions} for the precise statement).

In Remark \ref{sign expec}, we present expected closed formulae for the unique signs (up to overall sign) of Conjecture \ref{K-conj intro}, which work for all the verifications done in this paper. This generalizes the sign formula obtained by Nekrasov-Piazzalunga, from physics methods, for Hilbert schemes of points on $\C^4$ \cite[(2.60)]{NP}.

We now discuss three limits, which were treated in the case of $I_n(\C^4,0)$ in \cite[Sect.~5.1--5.2]{Nek} (though we do not need the ``perturbative term'' of loc.~cit.).

\subsection{Dimensional reduction to 3-folds} 

Let $D$ be a smooth toric 3-fold\,\footnote{More precisely, a smooth quasi-projective toric 3-fold such that every cone of its fan is contained in a 3-dimensional cone.} and let $\beta \in H_2(D,\Z)$. Consider the following generating functions
$$
\sum_{n} \chi(I_n(D,\beta), \widehat{\O}^{\vir}_I) \, q^n, \quad \sum_{n} \chi(P_n(D,\beta), \widehat{\O}^{\vir}_P) \, q^n,
$$
where $\widehat{\O}^{\vir}_I = \O^{\vir}_I \otimes (K_I^{\vir})^{\frac{1}{2}}$, $\widehat{\O}^{\vir}_P = \O^{\vir}_P \otimes (K_P^{\vir})^{\frac{1}{2}}$ are the twisted virtual structure sheaves of $I_n(D,\beta)$, $P_n(D,\beta)$ introduced in \cite{NO}\,\footnote{In the 3-fold case, the invariants $\chi(I_n(D,\beta), \widehat{\O}^{\vir}_I)$, $\chi(P_n(D,\beta), \widehat{\O}^{\vir}_P)$ do not depend on the choice of square root $(K_I^{\vir})^{\frac{1}{2}}$, $(K_P^{\vir})^{\frac{1}{2}}$. This is because different choices of square roots have the same first Chern class (modulo torsion). See also \cite[Section 2.5]{Arb}.}.

The calculation of the $K$-theoretic DT/PT invariants of toric 3-folds is governed by the $K$-theoretic 3-fold DT/PT vertex \cite{NO, O, Arb}
$$
\mathsf{V}_{\lambda\mu\nu}^{\textrm{3D},\DT}(t,q), \quad \mathsf{V}_{\lambda\mu\nu}^{\textrm{3D},\PT}(t,q) \in \Q(t_1,t_2,t_3,(t_1t_2t_3)^{\frac{1}{2}})(\!(q)\!),
$$
where $\lambda, \mu, \nu$ are line partitions (2D partitions) determining the underlying $(\C^*)^3$-fixed Cohen-Macaulay curve and $t_1,t_2,t_3$ are the characters of the standard torus action on $\C^3$.

In the next theorem, $\lambda,\mu,\nu$ are line partitions in the $(x_2,x_3)$, $(x_1,x_3)$, $(x_1,x_2)$-planes respectively. Then $\lambda, \mu, \nu$ can be seen as plane partitions in $(x_2,x_3,x_4)$, $(x_1,x_3,x_4)$, $(x_1,x_2,x_4)$-space, respectively, by inclusion $\{x_4=0\} \subseteq \C^3$. 

Any plane partition $\lambda, \mu, \nu, \rho$ determine a $(\C^*)^4$-fixed Cohen-Macaulay curve on $\C^4$ with asymptotic profiles $\lambda, \mu, \nu, \rho$. The ideal sheaf of such a curve corresponds to a monomial ideal, which is described by a solid partition denoted by $\pi_{\CM}(\lambda,\mu,\nu,\rho)$ (this is explained in detail in Section \ref{fixlocus}). The renormalized volume of this solid partition is denoted by $|\pi_{\CM}(\lambda,\mu,\nu,\rho)|$ (Definition \ref{solid}).

\begin{theorem} \label{dimred intro}
Let $\lambda,\mu,\nu$ be any line partitions in the $(x_2,x_3)$, $(x_1,x_3)$, $(x_1,x_2)$-planes respectively. For any $T$-fixed subscheme $Z \subseteq \C^4$ with underlying maximal Cohen-Macaulay curve $C$ determined by $\lambda,\mu,\nu, \varnothing$, we choose its sign in Definition \ref{Nekgen} equal to $(-1)^{|\pi_{\CM}(\lambda,\mu,\nu,\varnothing)| + \chi(I_C / I_Z)}$, where $\chi(I_C / I_Z)$ equals the number of embedded points of $Z$.
For any $T$-fixed stable pair $(F,s)$ on $\C^4$ with underlying Cohen-Macaulay curve determined by $\lambda,\mu,\varnothing, \varnothing$, we choose its sign in Definition \ref{Nekgen} equal to $(-1)^{|\pi_{\CM}(\lambda,\mu,\varnothing,\varnothing)| + \chi(Q)}$, where $\chi(Q)$ denotes the length of the cokernel of $s$.
Then
\begin{align}
\begin{split} \label{dimredeqns}
\mathsf{V}_{\lambda\mu\nu\varnothing}^{\DT}(t,y,q)|_{y=t_4} = \mathsf{V}_{\lambda\mu\nu}^{\mathrm{3D},\DT}(t,-q), \quad \mathsf{V}_{\lambda\mu\varnothing\varnothing}^{\PT}(t,y,q)|_{y=t_4} = \mathsf{V}_{\lambda\mu\varnothing}^{\mathrm{3D},\PT}(t,-q).
\end{split}
\end{align}
In particular, Conjecture \ref{K-conj intro} and compatibility of signs imply\,\footnote{Compatibility of signs means that there exist choices of signs in Conjecture \ref{K-conj intro} compatible with the choices of signs stated in this theorem. For all cases where we checked Conjecture \ref{K-conj intro} (listed in Proposition \ref{verif}), the sign formulae in Remark \ref{sign expec} satisfy this compatibility.}
$$
 \mathsf{V}_{\lambda\mu\varnothing}^{\mathrm{3D},\DT}(t,q)  =  \mathsf{V}_{\lambda\mu\varnothing}^{\mathrm{3D},\PT}(t,q)    \, \mathsf{V}_{\varnothing\varnothing\varnothing}^{\mathrm{3D},\DT}(t,q). 
$$
\end{theorem}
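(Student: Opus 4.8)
The plan is to establish the two identities in \eqref{dimredeqns} by a fixed-point-by-fixed-point comparison, and then to deduce the displayed $3$-fold DT/PT identity as a formal consequence. Both sides of each identity are, by Definition \ref{vertexdef} (resp.\ by the definition of the $K$-theoretic $3$-fold vertex in \cite{NO, O, Arb}), generating series in $q$ whose coefficients are finite sums of explicit $K$-theoretic weights over the relevant fixed loci. After setting $L = \O_{\C^4}$ and $y = t_4$, I would reduce the DT identity to two sub-goals: (i) a bijection between the $(\C^*)^4$-fixed subschemes with asymptotics $\lambda,\mu,\nu,\varnothing$ that contribute a \emph{nonzero} term to $\mathsf{V}^{\DT}_{\lambda\mu\nu\varnothing}(t,t_4,q)$ and the $(\C^*)^3$-fixed points entering $\mathsf{V}^{\mathrm{3D},\DT}_{\lambda\mu\nu}$; and (ii) an identity of local weights under this bijection, with the discrepancy absorbed into the prescribed sign and the substitution $q \mapsto -q$.

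For sub-goal (i) the key point is a vanishing. The $T$-fixed subschemes are solid partitions $\pi$ whose box sets are order ideals (downsets) in $\Z_{\geqslant 0}^4$; since $\rho = \varnothing$ and the legs $\lambda,\mu,\nu$ are flat (supported in $\{x_4=0\}$ under the stated inclusion), any box of $\pi$ with $x_4$-coordinate $\geqslant 1$ forces the box $(0,0,0,1)$ to lie in $\pi$. With $L = \O_{\C^4}$ the class $L^{[n]}|_Z$ is the renormalized character $\sum_{b \in Z} t^b$ of $H^0(\O_Z)$, where $t^b := t_1^{b_1}t_2^{b_2}t_3^{b_3}t_4^{b_4}$, so using $\Lambda^{\mdot} = \Lambda_{-1}$ and $(1-x)x^{-\frac12} = -[x]$ the insertion at $y=t_4$ becomes
$$
\frac{\ch\big(\Lambda^{\mdot}(L^{[n]}|_Z \otimes y^{-1})\big)}{\ch\big((\det(L^{[n]}|_Z \otimes y^{-1}))^{\frac{1}{2}}\big)}\Bigg|_{y=t_4} = \prod_{b \in Z}\big(-[t^b t_4^{-1}]\big).
$$
The box $(0,0,0,1)$ contributes the factor $-[t_4 t_4^{-1}] = -[1] = 0$, so every solid partition that is not flat in the $x_4$-direction contributes $0$. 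Hence only the flat solid partitions survive, and these are exactly the plane partitions in $\{x_4=0\}$ with legs $\lambda,\mu,\nu$, i.e.\ the $(\C^*)^3$-fixed points of the $3$-fold vertex; this yields the bijection.

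For sub-goal (ii) I would compute the surviving weight using the explicit square root $\sqrt{T^{\vir}_{\C^4}|_Z}$ constructed in Section \ref{sec:taking roots} together with the Calabi-Yau relation $t_1t_2t_3t_4 = 1$. For a flat $Z = Z'\times\{0\}$ I would grade $T^{\vir}_{\C^4}|_Z$ by $t_4$-weight and show that, after multiplying the symmetrized $4$-fold factor $\ch(\sqrt{K_I^{\vir}|_Z}^{\frac12})/\ch(\Lambda^{\mdot}\sqrt{T^{\vir}_I|_Z}^{\vee})$ by the surviving insertion $\prod_{b\in Z}(-[t^b t_4^{-1}])$, the result collapses, term by term in the $t_4$-grading, to the $(t_1t_2t_3)^{\frac12}$-twisted $3$-fold weight attached to $T^{\vir}_{\C^3}|_{Z'}$, up to an explicit sign. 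I expect this algebraic collapse — converting the $\widehat A$-type $4$-fold contribution into the $3$-fold one by cancelling the extra $t_4$-direction factors against the $y$-insertion — to be the main obstacle, as it requires the precise form of the chosen square root and a careful weight-by-weight cancellation. The remaining sign and variable bookkeeping is then direct: the insertion carries an overall sign $(-1)^{\#\{b\in Z\}}$ which, combined with the prescribed orientation sign $(-1)^{|\pi_{\CM}(\lambda,\mu,\nu,\varnothing)| + \chi(I_C/I_Z)}$ from Definition \ref{Nekgen} and the grading $q^{n}$ with $n=\chi(\O_Z)$, turns each term of $\mathsf{V}^{\DT}_{\lambda\mu\nu\varnothing}(t,t_4,q)$ into the corresponding term of $\mathsf{V}^{\mathrm{3D},\DT}_{\lambda\mu\nu}(t,-q)$; this is precisely where $q \mapsto -q$ originates and where the stated sign is forced. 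The stable pairs identity is handled identically: flatness in $x_4$ is again forced by the downset structure (now with $\nu=\rho=\varnothing$), the cokernel length $\chi(Q)$ replaces $\chi(I_C/I_Z)$, and the $3$-fold PT weight of \cite{NO, O, Arb} is recovered.

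Finally, the ``in particular'' statement follows formally. Applying Conjecture \ref{K-conj intro} with legs $\lambda,\mu,\varnothing,\varnothing$ gives $\mathsf{V}^{\DT}_{\lambda\mu\varnothing\varnothing} = \mathsf{V}^{\PT}_{\lambda\mu\varnothing\varnothing}\,\mathsf{V}^{\DT}_{\varnothing\varnothing\varnothing\varnothing}$; setting $y=t_4$ and applying \eqref{dimredeqns} to each factor (with signs compatible as guaranteed by the stated footnote) yields $\mathsf{V}^{\mathrm{3D},\DT}_{\lambda\mu\varnothing}(t,-q) = \mathsf{V}^{\mathrm{3D},\PT}_{\lambda\mu\varnothing}(t,-q)\,\mathsf{V}^{\mathrm{3D},\DT}_{\varnothing\varnothing\varnothing}(t,-q)$, and the invertible substitution $-q \mapsto q$ gives the claimed $3$-fold DT/PT identity.
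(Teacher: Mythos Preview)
Your overall strategy matches the paper's: show that non-flat fixed points contribute zero after setting $y=t_4$, and that flat ones recover the $3$-fold weight. The vanishing argument via the box $(0,0,0,1)$ is exactly the paper's mechanism, phrased slightly differently. However, two points deserve attention.

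First, your product $\prod_{b\in Z}(-[t^b t_4^{-1}])$ is only literally defined when $Z$ is zero-dimensional; with legs, $H^0(\O_Z)$ has infinitely many weights and the product is formal. The paper avoids this by having already absorbed the insertion into the vertex in Section~\ref{sec:Kinsert}: the $y$-dependent part of the Laurent polynomial $\widetilde{\mathsf{v}}_{Z_\alpha}$ is precisely $-y\overline{W}_\alpha$, where $W_\alpha$ is the \emph{finite} ``renormalized'' part of $Z_\alpha$ (so $Z_\alpha = \sum_i \frac{Z_{\alpha\beta_i}}{1-t_i} + W_\alpha$). Your box $(0,0,0,1)$ then appears as the term $+t_4$ in $W_\alpha$, contributing $-y\,t_4^{-1}|_{y=t_4}=-1$ to $\widetilde{\mathsf{v}}_{Z_\alpha}$; since no negative term of $W_\alpha$ can equal $t_4$ (all such terms come from the CM curve in $\{x_4=0\}$ and involve only nonnegative powers of $t_1,t_2,t_3$), this $-1$ is uncancelled and forces $[-\widetilde{\mathsf{v}}_{Z_\alpha}]|_{y=t_4}=0$. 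This is your argument made rigorous (Proposition~\ref{dimredprop}, second part).

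Second, and more importantly, your sub-goal (ii) --- the ``algebraic collapse'' you flag as the main obstacle --- evaporates once you work with $\widetilde{\mathsf{v}}_{Z_\alpha}$ rather than separating the square root of $T^{\vir}$ from the insertion. For $Z_\alpha$ flat in $\{x_4=0\}$ one has $Z_{\alpha\beta_4}=0$ and $Z_\alpha$ involves only $t_1,t_2,t_3$; substituting $y=t_4=(t_1t_2t_3)^{-1}$ into the explicit formula \eqref{defvef2} and using $\overline{P}_{123}=-P_{123}/(t_1t_2t_3)$ gives, on the nose, the MNOP/PT2 expression for $\mathsf{V}^{\mathrm{3D}}_{Z_\alpha}$. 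No $t_4$-grading or term-by-term cancellation is needed: the choice of square root in \eqref{defvef1} was designed precisely so that this comparison is a one-line substitution (cf.\ Remark~\ref{asymmetry}). So the ``main obstacle'' you anticipate is an artifact of not using the paper's packaging; once you adopt it, the proof of \eqref{dimredeqns} is immediate, and the sign/$-q$ bookkeeping and the final ``in particular'' deduction go through exactly as you describe.
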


\begin{remark} \label{compatdimredsgns}
In all the cases for which we checked Conjecture \ref{K-conj intro} (see Proposition \ref{verif}), we verified that the compatible choice of signs mentioned in Theorem \ref{dimred intro} exists. This explains our sign choice for the $T$-fixed points which are scheme theoretically supported on $\{x_4=0\}$.
\end{remark}

\begin{theorem} \label{dimredcor}
Assume Conjecture \ref{K-conj intro} and compatibility of signs. Let $D$ be a smooth toric 3-fold and $\beta \in H_2(D,\Z)$ such that all $(\C^*)^3$-fixed points of $\bigcup_n I_n(D,\beta)$, $\bigcup_n P_n(D,\beta)$ have at most two legs in each maximal $(\C^*)^3$-invariant affine open subset of $D$, e.g.~$D$ is a local toric curve or local toric surface. Then the $K$-theoretic DT/PT correspondence \cite[Eqn.~(16)]{NO} holds:
$$
\frac{\sum_{n} \chi(I_n(D,\beta), \widehat{\O}^{\vir}_I) \, q^n}{\sum_{n} \chi(I_n(D,0), \widehat{\O}^{\vir}_I) \, q^n} = \sum_{n} \chi(P_n(D,\beta), \widehat{\O}^{\vir}_P) \, q^n.
$$
\end{theorem}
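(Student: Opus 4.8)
The plan is to reduce the global $3$-fold statement to the local vertex identity $\mathsf{V}_{\lambda\mu\varnothing}^{\mathrm{3D},\DT}(t,q) = \mathsf{V}_{\lambda\mu\varnothing}^{\mathrm{3D},\PT}(t,q)\,\mathsf{V}_{\varnothing\varnothing\varnothing}^{\mathrm{3D},\DT}(t,q)$, which is already the last displayed equation of Theorem \ref{dimred intro} under the hypotheses assumed here (Conjecture \ref{K-conj intro} plus compatibility of signs, and the assumption that at most two legs come together). So the real content is a gluing argument. First I would set up the $(\C^*)^3$-localization on $I_n(D,\beta)$ and $P_n(D,\beta)$: since every maximal invariant affine open of $D$ is $\cong \C^3$ and the fixed loci are isolated reduced points under the two-legs hypothesis (local curve or local surface), the generating series $\sum_n \chi(I_n(D,\beta),\widehat{\O}^{\vir}_I)q^n$ factors as a product over the vertices (maximal invariant affines) of local DT vertex contributions $\mathsf{V}^{\mathrm{3D},\DT}_{\lambda\mu\nu}$, times edge contributions indexed by the invariant $\PP^1$'s of $D$, exactly as in the cohomological/$K$-theoretic topological vertex formalism of Maulik--Nekrasov--Okounkov--Pandharipande and its stable-pairs analogue (Pandharipande--Thomas, Nekrasov--Okounkov). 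The same edge factors appear in the PT series. This step is essentially a citation to the $K$-theoretic $3$-fold vertex/edge formalism, but I would state it carefully as a Proposition, because I need the edge terms to be \emph{identical} on both sides and independent of the embedded-point/cokernel data.

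Next I would observe that the dimensional-reduction identities \eqref{dimredeqns} of Theorem \ref{dimred intro} express each $3$-fold vertex as a specialization $y=t_4$ (with $q\mapsto -q$) of the corresponding $4$-fold vertex, and that Conjecture \ref{K-conj intro} gives $\mathsf{V}_{\lambda\mu\varnothing\varnothing}^{\DT} = \mathsf{V}_{\lambda\mu\varnothing\varnothing}^{\PT}\,\mathsf{V}_{\varnothing\varnothing\varnothing\varnothing}^{\DT}$ at the $4$-fold level; specializing $y=t_4$ and substituting $q\mapsto -q$ — using compatibility of signs so that the sign choices on both sides match up consistently across all vertices of $D$ simultaneously — yields the $3$-fold vertex DT/PT identity with the \emph{same} signs at every vertex. (The crucial point is that the $-q$ in one factor versus $-q$ in two factors on the RHS produces a single overall $-q$, consistent with $\sum P_n q^n$ having no sign twist; I would check this bookkeeping explicitly, since $\mathsf{V}^{\mathrm{3D},\DT}_{\lambda\mu\nu}$ and $\mathsf{V}^{\mathrm{3D},\DT}_{\varnothing\varnothing\varnothing}$ each get $-q$ but their ratio against $\mathsf{V}^{\mathrm{3D},\PT}$ must reproduce the untwisted $q$ on the PT side — this is already implicit in the statement of Theorem \ref{dimred intro} and I would just make it precise.)

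Finally I would assemble the global identity: multiply the vertex DT/PT identity over all vertices $v$ of $D$ and reinstate the edge contributions. On the DT side the numerator is $\prod_v \mathsf{V}^{\mathrm{3D},\DT}_{\lambda_v\mu_v\nu_v}\cdot\prod_e(\text{edge})$ and the denominator $\sum_n\chi(I_n(D,0),\widehat{\O}^{\vir}_I)q^n$ is exactly $\prod_v \mathsf{V}^{\mathrm{3D},\DT}_{\varnothing\varnothing\varnothing}$ (degree-zero invariants localize to the vertices only, with no curve classes and hence no edges). Dividing, the $\mathsf{V}^{\mathrm{3D},\DT}_{\varnothing\varnothing\varnothing}$ factors cancel against the corresponding factor in each vertex identity, leaving $\prod_v \mathsf{V}^{\mathrm{3D},\PT}_{\lambda_v\mu_v\nu_v}\cdot\prod_e(\text{edge}) = \sum_n \chi(P_n(D,\beta),\widehat{\O}^{\vir}_P)q^n$, which is the claim. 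I expect the main obstacle to be the \textbf{sign compatibility across vertices}: Theorem \ref{dimred intro} fixes the sign at a $T$-fixed locus on $\C^4$ supported on $\{x_4=0\}$ in terms of $|\pi_{\CM}|$ and the number of embedded points/length of cokernel, and I need to know these local choices glue to a \emph{single} global orientation $o(\ccL)$ on $I_n(D,\beta)$ and $P_n(D,\beta)$ — and that the same global orientation works simultaneously in numerator, denominator, and the PT series — which is precisely the ``compatibility of signs'' hypothesis; verifying it is only done case-by-case in this paper (Remark \ref{sign expec}, Proposition \ref{verif}), so in the conditional statement I would simply invoke it, but I would flag that making it unconditional requires a general orientation-gluing result in the style of Cao--Gross--Joyce.
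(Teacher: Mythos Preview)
Your proposal is correct and follows essentially the same route as the paper: recall the $K$-theoretic 3-fold vertex formalism (expressing $\chi(I_n(D,\beta),\widehat{\O}^{\vir}_I)$ and its PT analogue as products of $[-\mathsf{V}^{\mathrm{3D}}_{Z_\alpha}]$ and $[-\mathsf{E}^{\mathrm{3D}}_{Z_{\alpha\beta}}]$ over fixed points), note that the DT and PT edge terms coincide, apply the 3-fold vertex identity of Theorem~\ref{dimred intro}, and glue as in the proof of Theorem~\ref{globaltoricKDTPT}. Your extra care with the $-q$ bookkeeping and the explicit flagging of sign compatibility as the delicate point are accurate and match the paper's treatment (which simply invokes Theorem~\ref{dimred intro} and the compatibility hypothesis).
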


\begin{remark}
The usual DT/PT correspondence on toric 3-folds \cite{PT2} is a special case of the $K$-theoretic version of Nekrasov-Okounkov \cite[Eqn.~(16)]{NO}.
To the authors' knowledge, the latter is still an open conjecture.
\end{remark}

\subsection{Cohomological limit I}

Let $t_i = e^{b \lambda_i}$, for all $i=1,2,3,4$, and $y = e^{b m}$. We impose the Calabi-Yau relation $t_1t_2t_3t_4=1$, which translates into $\lambda_1+\lambda_2+\lambda_3+\lambda_4=0$. In Section \ref{coho limi 1}, we study the limit $b \rightarrow 0$.
Let $X$ be a Calabi-Yau 4-fold, $\beta \in H_2(X,\Z)$, and $L$ a $T$-equivariant line bundle on $X$. Define the following invariants 
\begin{align}
\begin{split} \label{defcohoinvI}
I_{n,\beta}^{\mathrm{coho}}(L,m) := \sum_{Z \in I_n(X,\beta)^T} &(-1)^{o(\L)|_Z} \frac{\sqrt{(-1)^{\frac{1}{2}\mathrm{ext}^{2}(I_Z,I_Z)}e\big(\Ext^{2}(I_Z,I_Z)\big)}}{e\big(\Ext^{1}(I_Z,I_Z)\big)} \\
&\cdot e(R\Gamma(X, L\otimes \O_Z)^\vee \otimes e^m),
\end{split}
\end{align}
where $\mathrm{ext}^2(I_Z,I_Z) = \dim \Ext^2(I_Z,I_Z)$. The expression under the square root is a square by $T$-equivariant Serre duality. As in Definition \ref{Nekgen}, for a fixed $Z$, two choices of square root differ by a sign and this indeterminacy is absorbed by the choice of orientation $(-1)^{o(\L)|_Z}$.
These invariants take values in 
$$
\frac{\Q(\lambda_1,\lambda_2,\lambda_3,\lambda_4,m)}{(\lambda_1+\lambda_2+\lambda_3+\lambda_4)},
$$
where $\lambda_i := c_1(t_i)$, $m := c_1(e^m)$ denote the $T \times \C^*$-equivariant parameters. Here $\C^*$ corresponds to a trivial torus action with equivariant parameter $e^m$. We similarly define invariants $P_{n,\beta}^{\mathrm{coho}}(L,m)$ replacing $I_n(X,\beta)$ by $P_n(X,\beta)$ and $R\Gamma(X, L\otimes \O_Z)$ by $R\Gamma(X, L\otimes F)$, in which case we also require Assumption \ref{assumption} holds.
\begin{theorem} \label{DT/PT tauto}
Let $X$ be a toric Calabi-Yau 4-fold, $\beta \in H_2(X,\Z)$, and let $L$ be a $T$-equivariant line bundle on $X$. Then
\begin{align*}
\lim_{b \rightarrow 0} \Big( \sum_{n}  I_{n,\beta}(L,y) \, q^n \Big) \Big|_{t_i = e^{b \lambda_i},y = e^{bm}}   &= \sum_{n} I_{n,\beta}^{\mathrm{coho}}(L,m) \, q^n, \\
\lim_{b \rightarrow 0} \Big( \sum_{n}  P_{n,\beta}(L,y) \, q^n \Big) \Big|_{t_i = e^{b \lambda_i},y=e^{bm}} &= \sum_{n} P_{n,\beta}^{\mathrm{coho}}(L,m) \, q^n,
\end{align*}
where the choice of signs on RHS is determined by the choice of signs on LHS. For the second equality, we assume $\bigcup_n P_n(X,\beta)^{(\C^*)^4}$ is at most 0-dimensional. Hence, Conjecture \ref{K-conj intro} implies that there exist choices of signs such that 
$$
\frac{\sum_{n} I_{n,\beta}^{\mathrm{coho}}(L,m) \, q^n}{\sum_{n} I_{n,0}^{\mathrm{coho}}(L,m) \, q^n}
=\sum_{n} P_{n,\beta}^{\mathrm{coho}}(L,m) \, q^n.
$$
\end{theorem}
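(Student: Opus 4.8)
The plan is to prove the two limit identities (for $I$ and for $P$) by localization and a term‑by‑term $b\to 0$ limit, and then to deduce the cohomological DT/PT correspondence by dividing generating series and invoking Theorem~\ref{globaltoricKDTPT}. Since $X$ is toric, $\bigcup_n I_n(X,\beta)^{T}=\bigcup_n I_n(X,\beta)^{(\C^*)^4}$ is a finite set of reduced points, so by Definition~\ref{Nekgen} the series $\sum_n I_{n,\beta}(L,y)q^n$ is a finite sum over these $Z$ of fixed‑point contributions, and by \eqref{defcohoinvI} so is $\sum_n I^{\mathrm{coho}}_{n,\beta}(L,m)q^n$, over the \emph{same} index set. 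Because $\lim_{b\to 0}$ and the substitution $t_i=e^{b\lambda_i},\,y=e^{bm}$ commute with finite sums, I would reduce to showing that for each $Z$ the substituted $K$‑theoretic contribution tends to the cohomological contribution; the sign $(-1)^{o(\L)|_Z}$ is carried along, and the residual sign ambiguity from the choices of square roots (which changes a single contribution by a sign on each side) is absorbed into it, as in the remarks following Definition~\ref{Nekgen} and \eqref{defcohoinvI}. The analytic engine is: if $\alpha=\sum_j t^{w_j}-\sum_k t^{v_k}\in K_0^T(\pt)$ (with the $\C^*$‑weight of $y$ allowed) has no trivial constituent, then under $t_i=e^{b\lambda_i},\,y=e^{bm}$ one has $\ch(t^w)=\exp(b\langle w,\lambda\rangle)$ with $\langle w,\lambda\rangle:=\sum_i w_i\lambda_i+w_y m$, and so by multiplicativity of $\Lambda^\mdot$
\begin{align*}
\ch\bigl(\Lambda^\mdot\alpha^\vee\bigr)=b^{\rk\alpha}\,e(\alpha)\,\bigl(1+O(b)\bigr),\qquad \ch\bigl((\det\alpha)^{\pm 1/2}\bigr)=1+O(b),
\end{align*}
while $\td(\alpha)=1+O(b)$ and $\ch(\Lambda^\mdot\alpha^\vee)=e(\alpha)\in\{0,1\}$ when $\alpha$ is a genuine trivial representation; here $e(\cdot)$ is the localized equivariant Euler class in the parameters $\lambda_i,m$.

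Now fix $Z$. By $T$‑equivariant Serre duality $T_I^\vir|_Z=\Ext^1(I_Z,I_Z)-\Ext^2(I_Z,I_Z)+\Ext^3(I_Z,I_Z)$ (trace‑free parts), with $\Ext^3\cong\overline{\Ext^1}$ and $\Ext^2$ self‑dual, and the chosen square root is $\sqrt{T_I^\vir|_Z}=\Ext^1(I_Z,I_Z)-\tfrac12\Ext^2(I_Z,I_Z)$, where $\tfrac12\Ext^2$ denotes a maximal isotropic summand with $\Ext^2=\tfrac12\Ext^2+\overline{\tfrac12\Ext^2}$. By Lemma~\ref{TfixlocusDT}, $\Ext^1$ and $\Ext^3$ have no $T$‑fixed part, so $\sqrt{N^\vir}=\Ext^1-(\tfrac12\Ext^2)^{\mov}$, $\sqrt{T_I^\vir|_Z}^{f}=-(\tfrac12\Ext^2)^{f}$, and $\sqrt{\mathrm{Ob}_I|_Z}^{f}=(\tfrac12\Ext^2)^{f}$. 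If $\Ext^2(I_Z,I_Z)$ has nonzero $T$‑fixed part, then $e(\sqrt{\mathrm{Ob}_I|_Z}^{f})=0$ and simultaneously the cohomological contribution vanishes, its numerator containing the equivariant Euler class of the trivial representation $\Ext^2(I_Z,I_Z)^{f}$, so both are $0$. Otherwise $(T_I^\vir|_Z)^{f}=0$, $\td(\sqrt{T_I^\vir|_Z}^{f})=1$, and the engine gives $\ch(\Lambda^\mdot\sqrt{N^\vir}^\vee)=b^{\mathrm{ext}^1-\frac12\mathrm{ext}^2}\,e(\Ext^1)/e(\tfrac12\Ext^2)\,(1+O(b))$, $\ch(\sqrt{K_I^\vir|_Z}^{1/2})=1+O(b)$, and, since $L^{[n]}|_Z=R\Gamma(X,L\otimes\O_Z)$,
\begin{align*}
\frac{\ch\bigl(\Lambda^\mdot(L^{[n]}|_Z\otimes y^{-1})\bigr)}{\ch\bigl((\det(L^{[n]}|_Z\otimes y^{-1}))^{1/2}\bigr)}=b^{\,\rk L^{[n]}|_Z}\,e\bigl(R\Gamma(X,L\otimes\O_Z)^\vee\otimes e^m\bigr)\,\bigl(1+O(b)\bigr).
\end{align*}
Using $e(\tfrac12\Ext^2)^2=(-1)^{\frac12\mathrm{ext}^2}e(\Ext^2)$ (the real structure), the product of all these factors equals $b^{\,\rk L^{[n]}|_Z-(\mathrm{ext}^1-\frac12\mathrm{ext}^2)}$ times the cohomological contribution of $Z$ in \eqref{defcohoinvI}. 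Finally $\mathrm{ext}^1-\tfrac12\mathrm{ext}^2=n$ (half the virtual rank of $T_I^\vir|_Z$), and for $\beta=0$ also $\rk L^{[n]}|_Z=\mathrm{length}(\O_Z)=n$, so the power of $b$ disappears and the limit is exactly the cohomological contribution; the case $\beta\neq 0$ is discussed below. The stable‑pairs case runs identically, using Lemma~\ref{PTfixedlocus}, Assumption~\ref{assumption}, and $F$ in place of $\O_Z$.

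For the last display: $\sum_n I_{n,0}(L,y)q^n$ has constant term $I_{0,0}(L,y)=1$ (the fixed point $\O_X$ contributes $1$), hence is invertible, and so is its limit $\sum_n I^{\mathrm{coho}}_{n,0}(L,m)q^n$. Assuming Conjecture~\ref{K-conj intro}, Theorem~\ref{globaltoricKDTPT} gives the factorization $\sum_n I_{n,\beta}(L,y)q^n=\bigl(\sum_n I_{n,0}(L,y)q^n\bigr)\bigl(\sum_n P_{n,\beta}(L,y)q^n\bigr)$; applying $\lim_{b\to 0}$ to both sides and using the two limit identities together with continuity of multiplication and of inversion of invertible power series yields $\sum_n I^{\mathrm{coho}}_{n,\beta}(L,m)q^n=\bigl(\sum_n I^{\mathrm{coho}}_{n,0}(L,m)q^n\bigr)\bigl(\sum_n P^{\mathrm{coho}}_{n,\beta}(L,m)q^n\bigr)$, which is the asserted cohomological DT/PT correspondence.

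I expect the main obstacle to be the matching in the second paragraph for $T$‑fixed loci carrying infinite legs, i.e.\ when $\beta\neq 0$: there $L^{[n]}|_Z$ and $\sqrt{N^\vir}$ are defined only as regularized $T$‑characters rather than honest finite‑dimensional representations, so one must check both that the $b\to 0$ expansion is compatible with the regularization and that the leftover power of $b$ (equivalently, that the regularized ranks of $L^{[n]}|_Z$ and $\sqrt{N^\vir}$ match, the legless instance of which is the identity $\rk L^{[n]}|_Z=n=\mathrm{ext}^1-\tfrac12\mathrm{ext}^2$) causes no trouble. I would handle this by applying the engine to finite truncations, controlling the error uniformly in the box count, together with the edge‑correction bookkeeping of the toric gluing formula. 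The remaining friction is purely sign‑theoretic — tracking the half‑integer weights (Proposition~\ref{integerpowers}) and the orientations $(-1)^{o(\L)|_Z}$ against the non‑uniqueness of the square roots — and is of the kind already isolated in Definition~\ref{Nekgen}.
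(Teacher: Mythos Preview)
Your overall strategy matches the paper's: expand each fixed-point contribution in powers of $b$, identify the leading coefficient as the cohomological contribution, and show the net power of $b$ is zero; then deduce the DT/PT identity from Theorem~\ref{globaltoricKDTPT}. For $\beta=0$ your argument is essentially complete and coincides with the paper's.

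The gap is exactly where you flag it: for $\beta\neq 0$ the classes $L^{[n]}|_Z$ and $\sqrt{T_I^{\vir}|_Z}$ are not finite $T$-representations, so the identity $\rk L^{[n]}|_Z=\mathrm{ext}^1-\tfrac12\mathrm{ext}^2$ has no direct meaning and your ``engine'' does not apply termwise. Your suggested fix via truncations is not carried out, and controlling the error uniformly in the box count is not straightforward. The paper resolves this not by regularization but by first passing through the vertex formalism (Theorem~\ref{vertexthm}): the redistribution in \eqref{defvef1}--\eqref{defvef2} converts every local contribution into a genuine Laurent polynomial $\widetilde{\mathsf{v}}_{Z_\alpha}$, $\widetilde{\mathsf{e}}_{Z_{\alpha\beta}}$, to which your expansion applies verbatim. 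The vanishing of the $b$-exponent is then the content of Proposition~\ref{coholimitkey} (that $\widetilde{\mathsf{v}}_{Z_\alpha}$ and $\widetilde{\mathsf{e}}_{Z_{\alpha\beta}}$ have rank zero), proved by an explicit substitution after cancelling poles. This is precisely the ``edge-correction bookkeeping'' you allude to, but it is the heart of the argument rather than an afterthought: without it, neither side of your rank identity is finite. Once you route the computation through the vertex/edge Laurent polynomials, the rest of your write-up goes through unchanged, and the second display follows from Theorem~\ref{globaltoricKDTPT} exactly as you say.
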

This theorem provides motivation for conjecturing the following new cohomological DT/PT correspondence for smooth projective Calabi-Yau 4-folds:
\begin{conjecture}
Let $X$ be a smooth projective Calabi-Yau 4-fold and $\beta\in H_2(X,\mathbb{Z})$. For any line bundle $L$ on $X$, there exist choices of orientations such that
$$\frac{\sum_{n}  \int_{[I_n(X,\beta)]^{\vir}}e(L^{[n]}) \, q^n}{\sum_n \int_{[I_n(X,0)]^{\vir}}e(L^{[n]}) \, q^n}= \sum_n \int_{[P_n(X,\beta)]^{\vir}}e(L^{[n]}) \, q^n.$$
\end{conjecture}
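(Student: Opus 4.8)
A projective Calabi--Yau $4$-fold $X$ carries no torus action in general, so the vertex formalism of this paper is unavailable and one cannot argue on fixed loci. The plan is instead to exhibit the conjectured identity as the shadow of a wall-crossing in $D^b(X)$, following the Calabi--Yau $3$-fold proofs of Bridgeland and Toda but carried out with the Donaldson--Thomas type virtual classes for Calabi--Yau $4$-folds (Borisov--Joyce, Oh--Thomas). Both an ideal sheaf $I_Z$ with $Z \in I_n(X,\beta)$ and a stable pair $(F,s)$, viewed as a two-term complex $[\O_X \to F] \in P_n(X,\beta)$, are objects of a common heart of $D^b(X)$ with equal Chern character, interchanged by varying a stability parameter; the quotient on the left-hand side is the combinatorial manifestation of the fact that the DT moduli differs from the PT moduli only by ``floating'' $0$-dimensional subschemes, which are accounted for by $I_n(X,0) = \Hilb^n(X)$. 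Concretely one would: (i) equip the relevant moduli stacks of objects, including the intermediate ones crossed along the wall, with virtual classes and orientations; (ii) invoke a Calabi--Yau $4$-fold wall-crossing formula, in the spirit of Joyce's universal wall-crossing and its recent $4$-fold incarnations, to express $\sum_n \int_{[P_n(X,\beta)]^{\vir}} e(L^{[n]})\, q^n$ in terms of the $\beta$ and the degree-$0$ series of $I$; and (iii) identify the correction factor it produces with $\sum_n \int_{[I_n(X,0)]^{\vir}} e(L^{[n]})\, q^n$.

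The new ingredient relative to the bare DT/PT correspondence is the tautological insertion $e(L^{[n]})$. Since $L^{[n]} = \mathbf{R}\pi_{I*}(\pi_X^* L \otimes \O_{\cZ})$ is built from the universal object, its Euler class is a universal cohomology class of the type that descends along the morphisms relating the moduli spaces. I would show that $e(L^{[n]})$ is \emph{multiplicative} with respect to the natural direct-sum and extension operations on these stacks --- so that, at the level of the relevant Hall-algebra or vertex-algebra structure, the insertion behaves like a group-like/exponential element --- which is exactly what forces the generating series to factor as in the statement, with the degree-$0$ series recording the contribution of the floating points together with their $L^{[n]}$-weight.

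An alternative, and possibly more tractable, route is reduction to the toric case by degeneration: degenerate $X$ to a normal-crossing union of relative quasi-projective pieces modeled on toric Calabi--Yau geometries, establish a degeneration/gluing formula for the $4$-fold DT and PT invariants carrying the insertion $e(L^{[n]})$, and apply Theorem \ref{globaltoricKDTPT} (equivalently, its cohomological limit via Theorem \ref{DT/PT tauto}) on each building block. Throughout, the orientations $o(\ccL)$ must be propagated coherently across the degeneration using the orientability results for Donaldson--Thomas $4$-fold moduli spaces; the fact that the statement asks only for \emph{existence} of a compatible sign choice makes this feasible in principle, but it is genuinely delicate since the signs are precisely the quantity left undetermined.

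I expect step (ii) to be the main obstacle: a Calabi--Yau $4$-fold wall-crossing formula that is simultaneously rigorous and refined enough to carry a tautological insertion is not yet available --- the universal wall-crossing machinery in this setting still requires the relevant vertex/Lie algebra input and a treatment of strictly semistable objects. As a first, unconditional sanity check I would prove the toric statement (Theorem \ref{globaltoricKDTPT}) as a template, then attempt the degeneration route for local curves and local surfaces, where at most two legs collide, the fixed loci are isolated, and Assumption \ref{assumption} together with the analysis behind Definition \ref{Nekgen} applies cleanly; only afterwards would I attack the general projective statement by wall-crossing. Secondary difficulties are proving that the generating series are well-defined formal power series in $q$ (a virtual-dimension and boundedness argument) and matching the orientations on the two sides.
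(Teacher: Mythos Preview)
The statement you are addressing is a \emph{Conjecture} in the paper, not a theorem, and the paper does not prove it. The authors state it immediately after Theorem~\ref{DT/PT tauto} with the explicit remark that the theorem ``provides motivation for conjecturing the following new cohomological DT/PT correspondence for smooth projective Calabi-Yau 4-folds.'' That is the full extent of what the paper offers: the toric computation shows that the identity holds equivariantly on toric Calabi--Yau 4-folds (conditional on Conjecture~\ref{K-conj intro}), and by analogy the authors propose the same shape of identity in the compact projective setting. There is no proof to compare your proposal against.

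Your proposal is therefore not comparable to anything in the paper; it is a speculative roadmap for attacking an open problem. You are right to flag step (ii) --- a Calabi--Yau 4-fold wall-crossing formula carrying tautological insertions --- as the essential missing ingredient, and to note that the degeneration route would require a gluing formula for Borisov--Joyce/Oh--Thomas classes that does not yet exist. Both routes you sketch are reasonable research directions, but neither constitutes a proof, and you correctly present them as such. The only concrete evidence in the paper is the toric verification via Theorem~\ref{DT/PT tauto}, which you already cite.
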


\subsection{Cohomological limit II} 

Let $t_i = e^{b \lambda_i}$, $y = e^{b m}$, $Q = m q$, where we again impose the Calabi-Yau relation $t_1t_2t_3t_4=1$. In Section \ref{coho limi 2}, we consider the limit $b \rightarrow 0, m \rightarrow \infty$. In \cite{CK2}, the two first-named authors studied the following cohomological invariants 
\begin{align}
\begin{split} \label{defcohoinvII}
I_{n,\beta}^{\mathrm{coho}} := \sum_{Z \in I_n(X,\beta)^T}
&(-1)^{o(\L)|_Z} \frac{\sqrt{(-1)^{\frac{1}{2}\mathrm{ext}^{2}(I_Z,I_Z)}e\big(\Ext^{2}(I_Z,I_Z)\big)}}{e\big(\Ext^{1}(I_Z,I_Z)\big)}, 
\end{split}
\end{align}
and similar invariants $P_{n,\beta}^{\mathrm{coho}}$, where we replace $I_{n}(X,\beta)$ by $P_n(X,\beta)$ and impose Assumption \ref{assumption}.
In \cite{CK2}, a vertex formalism for these invariants was established giving rise to the cohomological DT/PT vertex
$$
\mathsf{V}_{\lambda\mu\nu\rho}^{\mathrm{coho}, \DT}(Q), \quad \mathsf{V}_{\lambda\mu\nu\rho}^{\mathrm{coho}, \PT}(Q) \in \frac{\Q(\lambda_1,\lambda_2,\lambda_3,\lambda_4)}{(\lambda_1+\lambda_2+\lambda_3+\lambda_4)}(\!(Q)\!),
$$
for any finite plane partitions $\lambda, \mu, \nu, \rho$. As above, in the stable pairs case we assume at most two of these partitions are non-empty. 

The cohomological DT/PT 4-fold vertex correspondence \cite{CK2} states:
\begin{conjecture}[Cao-Kool] \label{conjCK2}
For any finite plane partitions $\lambda, \mu, \nu, \rho$, at most two of which are non-empty, there are choices of signs such that
$$
\mathsf{V}_{\lambda\mu\nu\rho}^{\mathrm{coho}, \DT}(Q) = \mathsf{V}_{\lambda\mu\nu\rho}^{\mathrm{coho}, \PT}(Q) \, \mathsf{V}_{\varnothing\varnothing\varnothing\varnothing}^{\mathrm{coho},\DT}(Q). 
$$ 
\end{conjecture}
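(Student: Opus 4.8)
The plan is to derive Conjecture \ref{conjCK2} from the $K$-theoretic vertex correspondence (Conjecture \ref{K-conj intro}) by degenerating the latter in the cohomological limit II of Section \ref{coho limi 2}. Set $t_i = e^{b\lambda_i}$ --- so that $t_1t_2t_3t_4=1$ becomes $\lambda_1+\lambda_2+\lambda_3+\lambda_4=0$ --- together with $y=e^{bm}$ and $Q=mq$, and take first $b\to 0$ (cohomological limit I) and then $m\to\infty$. Because Conjecture \ref{conjCK2} is a multiplicative identity in $\frac{\Q(\lambda_1,\lambda_2,\lambda_3,\lambda_4)}{(\lambda_1+\lambda_2+\lambda_3+\lambda_4)}(\!(Q)\!)$, and because at each fixed order in $Q$ every vertex series involved is a \emph{finite} sum over isolated $(\C^*)^4$-fixed points --- on the $\PT$ side this uses the hypothesis that at most two of $\lambda,\mu,\nu,\rho$ are non-empty, so Lemma \ref{fewlegs} and Assumption \ref{assumption} apply and the fixed locus stays $0$-dimensional --- it suffices to show that each of the three $K$-theoretic vertices in Conjecture \ref{K-conj intro} converges, under this iterated limit, to the corresponding cohomological vertex of \cite{CK2}.

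\textbf{The core degeneration.} The key claim is that, with the substitution above,
\[
\lim_{m\to\infty}\ \lim_{b\to 0}\ \mathsf{V}^{\DT}_{\lambda\mu\nu\rho}(t,y,q) \;=\; \mathsf{V}^{\mathrm{coho},\DT}_{\lambda\mu\nu\rho}(Q),
\]
and likewise for $\mathsf{V}^{\PT}_{\lambda\mu\nu\rho}$ (at most two partitions non-empty) and for $\mathsf{V}^{\DT}_{\varnothing\varnothing\varnothing\varnothing}$. I would prove this term-by-term over the $(\C^*)^4$-fixed points: solid partitions $\pi$ with asymptotics $\lambda,\mu,\nu,\rho$ on the $\DT$ side, isolated fixed stable pairs on the $\PT$ side. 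For a single fixed point the summand of Definition \ref{Nekgen} (with $L=\O_{\C^4}$) factors as a virtual tangent factor $\frac{\ch(\sqrt{K^{\vir}}^{\frac12})}{\ch(\Lambda^{\mdot}\sqrt{T^{\vir}}^\vee)}\td(\sqrt{T^{\vir}}^f)e(\sqrt{\mathrm{Ob}}^f)$ and a tautological factor $\frac{\ch(\Lambda^{\mdot}(\O^{[n]}\otimes y^{-1}))}{\ch((\det(\O^{[n]}\otimes y^{-1}))^{\frac12})}$, both built from the symbols $[x]=x^{\frac12}-x^{-\frac12}$. Under $t_i=e^{b\lambda_i}$, $y=e^{bm}$ one has $[x]=2\sinh\!\big(\tfrac b2\,(\text{weight})\big)=b\cdot(\text{weight})+O(b^3)$; the virtual tangent factor is $\sim b^{-\vd}$ with leading coefficient $\frac{\sqrt{(-1)^{\frac12\mathrm{ext}^{2}(I_Z,I_Z)}e(\mathrm{Ext}^{2}(I_Z,I_Z))}}{e(\mathrm{Ext}^{1}(I_Z,I_Z))}$, while the tautological factor is $\sim b^{+\vd}$ with leading coefficient $e\big(R\Gamma(\C^4,\O_Z)^\vee\otimes e^m\big)$, where $\vd=\mathrm{ext}^1-\tfrac12\mathrm{ext}^2$ is the virtual dimension, which equals $n=\rk(\O^{[n]}|_Z)$. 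The powers of $b$ cancel, so $\lim_{b\to 0}$ of the summand is exactly the summand of the cohomological-limit-I invariant \eqref{defcohoinvI}; this is the vertex-level version of Theorem \ref{DT/PT tauto}. Letting $m\to\infty$, the factor $e\big(R\Gamma(\C^4,\O_Z)^\vee\otimes e^m\big)=\prod_j(m-\tilde\ell_j)$ is $m^{n}(1+O(1/m))$, so substituting $q=Q/m$ promotes $q^{n}$ to $Q^{n}$ and only the top-order-in-$m$ part survives; what remains is $\frac{\sqrt{(-1)^{\frac12\mathrm{ext}^2}e(\mathrm{Ext}^2)}}{e(\mathrm{Ext}^1)}\,Q^{|\pi|}$, precisely the summand of $\mathsf{V}^{\mathrm{coho},\DT}_{\lambda\mu\nu\rho}(Q)$ built from \eqref{defcohoinvII}, the exponent being the renormalized volume $|\pi|$. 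The identical computation on the $\PT$ side (isolated fixed locus, $\rk(\FF^{[n]}|_x)=\chi(F)=n=\vd$) gives $\mathsf{V}^{\mathrm{coho},\PT}_{\lambda\mu\nu\rho}(Q)$, and interchanging the finite sum with the two limits is harmless order-by-order.

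\textbf{Signs.} Each of the three $K$-theoretic vertices carries one sign per fixed point, and so does each cohomological vertex (through the orientation $(-1)^{o(\L)|_Z}$ of \eqref{defcohoinvII}). A choice of square root of $T^{\vir}|_Z$ induces one of every complex above and contributes the \emph{same} sign before and after the limit, so any sign choice realizing Conjecture \ref{K-conj intro} --- with $\mathsf{V}^{\DT}_{\varnothing\varnothing\varnothing\varnothing}$ normalized as in Conjecture \ref{Nekconj} --- descends to a sign choice realizing Conjecture \ref{conjCK2}. Combined with the explicit sign formulae of Remark \ref{sign expec} and the compatibilities already recorded in Theorem \ref{dimred intro} and Remark \ref{compatdimredsgns}, one concludes that Conjecture \ref{K-conj intro} together with sign compatibility implies Conjecture \ref{conjCK2}; the global statement then follows from the cohomological gluing formula, paralleling how Theorem \ref{globaltoricKDTPT} follows from Conjecture \ref{K-conj intro}.

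\textbf{Main obstacle.} There are two difficulties of different character. Conceptually, this route proves Conjecture \ref{conjCK2} only \emph{conditionally}: Conjecture \ref{K-conj intro} is itself open (verified only in the low-degree cases of Proposition \ref{verif}), and an unconditional proof would require genuinely new input --- a wall-crossing or motivic argument for moduli of pairs on Calabi--Yau $4$-folds in the spirit of Bridgeland's and Toda's proofs in dimension three, which is not currently available, or a direct combinatorial identification of the two vertex power series. Technically, the heart of the conditional argument is controlling the \emph{iterated} limit $b\to 0$ then $m\to\infty$ with $Q=mq$: one must confirm that the leading-order-in-$b$ terms of each summand do not cancel beyond the expected $b^{-\vd}\cdot b^{+\vd}$ cancellation, that the surviving leading coefficients genuinely reassemble into the \cite{CK2} vertex rather than some other series, and --- on the $\PT$ side --- that the degeneration respects the length-of-cokernel bookkeeping (the ``$\chi(Q)$'' of Theorem \ref{dimred intro}) and the edge/vertex gluing. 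I expect the iterated-limit analysis to be the real technical content, and the absence of CY$4$ wall-crossing to be the genuine barrier to an unconditional theorem.
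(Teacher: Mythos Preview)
Your strategy—derive Conjecture \ref{conjCK2} from Conjecture \ref{K-conj intro} by passing to cohomological limit II—is exactly the paper's route (second part of Theorem \ref{coholimit intro}). The paper proceeds via Proposition \ref{doublelimit}, which shows termwise that $\lim_{b\to 0,\,m\to\infty}\big([-\widetilde{\mathsf{v}}_{Z_\alpha}]\,q^{|Z_\alpha|}\big) = e(-\mathsf{V}^{\mathrm{coho}}_{Z_\alpha})\,Q^{|Z_\alpha|}$, and then applies this to each of the three vertices in Conjecture \ref{K-conj intro}; signs carry through verbatim.

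The one technical difference worth noting is that the paper works throughout with the \emph{absorbed} vertex $\widetilde{\mathsf{v}}_{Z_\alpha}$ of \eqref{defvef2}, which is a Laurent polynomial even when $Z_\alpha$ has legs. Your decomposition into a separate ``virtual tangent factor'' and ``tautological factor'' is fine in the point-like case but runs into trouble when $\lambda,\mu,\nu,\rho$ are not all empty: on a single chart $U_\alpha\cong\C^4$ with non-empty asymptotics, $R\Gamma(\C^4,\O_{Z_\alpha})$ is infinite-dimensional, so neither factor is separately well-defined, the product $\prod_j(m-\tilde\ell_j)$ does not make sense, and the equality ``$\vd=n=\rk(\O^{[n]}|_Z)$'' is not meaningful at vertex level (the correct local count is the renormalized volume $|Z_\alpha|$, not the global $n=\chi(\O_Z)$). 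The paper sidesteps this: Proposition \ref{coholimitkey} shows directly that the Laurent polynomial $\widetilde{\mathsf{v}}_{Z_\alpha}$ has rank zero and that its $y$-terms are exactly $-y\overline{W}_\alpha$ with $\rk W_\alpha=|Z_\alpha|$; this is what makes the $b\to 0$ limit exist (Proposition \ref{limitexists}) and the subsequent $m\to\infty$ limit produce $Q^{|Z_\alpha|}$. Your ``Main obstacle'' paragraph correctly flags the iterated-limit analysis as the real work; the paper's resolution is precisely this redistribution and the rank-zero lemma. Your observation that the argument is only conditional on Conjecture \ref{K-conj intro} is also exactly how the paper frames it.
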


\begin{theorem} \label{coholimit intro}
Let $X$ be a toric Calabi-Yau 4-fold and $\beta \in H_2(X,\Z)$. Then
\begin{align*}
\lim_{b \rightarrow 0 \atop m \rightarrow \infty}\Big(\sum_{n}  I_{n,\beta}(\O_X,e^{bm})\, q^n\Big)\Big|_{t_i = e^{b \lambda_i}, Q=qm} &= \sum_{n} I_{n,\beta}^{\mathrm{coho}} \, Q^n, \\
\lim_{b \rightarrow 0 \atop m \rightarrow \infty}\Big(\sum_{n}  P_{n,\beta}(\O_X,e^{bm})\, q^n\Big)\Big|_{t_i = e^{b \lambda_i}, Q=qm} &= \sum_{n} P_{n,\beta}^{\mathrm{coho}} \, Q^n,
\end{align*}
where the choice of signs on RHS is determined by the choice of signs on LHS. For the second equality, we assume $\bigcup_n P_n(X,\beta)^{(\C^*)^4}$ is at most 0-dimensional. Moreover, Conjecture \ref{K-conj intro} implies Conjecture \ref{conjCK2}.
\end{theorem}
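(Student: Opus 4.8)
The plan is to split the theorem into its two parts — the two displayed limit identities, which are unconditional, and the implication ``Conjecture~\ref{K-conj intro} $\Rightarrow$ Conjecture~\ref{conjCK2}'' — and in both cases to reduce to a limit computation, the second time at the level of the $K$-theoretic vertex.

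For the two limit identities I would first invoke Theorem~\ref{DT/PT tauto} with $L=\O_X$, which already gives
\[
\lim_{b\to 0}\Big(\sum_n I_{n,\beta}(\O_X,e^{bm})\,q^n\Big)\Big|_{t_i=e^{b\lambda_i},\,y=e^{bm}}=\sum_n I^{\mathrm{coho}}_{n,\beta}(\O_X,m)\,q^n,
\]
and likewise for $P$ under Assumption~\ref{assumption}. It then remains to carry out the extra limit $m\to\infty$ after the substitution $Q=qm$. For fixed $n$ and $Z\in I_n(X,\beta)^T$, the corresponding summand of $I^{\mathrm{coho}}_{n,\beta}(\O_X,m)$ contains the factor $e\big(R\Gamma(X,\O_Z)^\vee\otimes e^m\big)$; since $Z$ is proper with $\chi(\O_Z)=n$, this is a ratio of equivariant Euler classes which behaves like a monic degree-$n$ polynomial in $m$ as $m\to\infty$. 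Hence $m^{-n}I^{\mathrm{coho}}_{n,\beta}(\O_X,m)\to I^{\mathrm{coho}}_{n,\beta}$, and since $Q=qm$ sends $q^n$ to $m^{-n}Q^n$ and the limit is taken coefficientwise in $Q$, the first identity follows; the $P$-case is identical, using $\chi(F)=n$.

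For the implication I would establish the vertex-level limits
\[
\lim_{b\to 0,\,m\to\infty}\mathsf{V}^{\DT}_{\lambda\mu\nu\rho}(t,y,q)\big|_{t_i=e^{b\lambda_i},\,Q=qm}=\mathsf{V}^{\mathrm{coho},\DT}_{\lambda\mu\nu\rho}(Q),\qquad
\lim_{b\to 0,\,m\to\infty}\mathsf{V}^{\PT}_{\lambda\mu\varnothing\varnothing}(t,y,q)\big|_{t_i=e^{b\lambda_i},\,Q=qm}=\mathsf{V}^{\mathrm{coho},\PT}_{\lambda\mu\varnothing\varnothing}(Q)
\]
for compatible choices of signs, and then apply the operation $\lim_{b\to0,m\to\infty}(\cdot)|_{t_i=e^{b\lambda_i},Q=qm}$ to the identity $\mathsf{V}^{\DT}_{\lambda\mu\nu\rho}=\mathsf{V}^{\PT}_{\lambda\mu\nu\rho}\,\mathsf{V}^{\DT}_{\varnothing\varnothing\varnothing\varnothing}$ furnished by Conjecture~\ref{K-conj intro}: the rescaling $q=Q/m$ is multiplicative, and since all three vertex series have $q$-constant term $1$ and the limit is coefficientwise in $Q$, the limit of the product equals the product of the limits, which gives Conjecture~\ref{conjCK2}. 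The vertex-level limits themselves I would prove termwise over solid partitions $\pi$ with the prescribed asymptotics: writing the $\pi$-summand of $\mathsf{V}^{\DT}_{\lambda\mu\nu\rho}$ as $q^{|\pi|}$ times $\ch(\sqrt{K^{\vir}_\pi}^{1/2})/\ch(\Lambda^\mdot\sqrt{T^{\vir}_\pi}^\vee)$ times $\ch(\Lambda^\mdot(\O^{[n]}_\pi\otimes y^{-1}))/\ch(\det(\O^{[n]}_\pi\otimes y^{-1})^{1/2})$, substituting $t_i=e^{b\lambda_i}$, $y=e^{bm}$, and using the elementary expansions $\ch(\Lambda^\mdot W^\vee)|_{t_i=e^{b\lambda_i}}=b^{\rk(W)}e(W)(1+O(b))$ for $W$ without trivial summand and $\ch$ of a half-power of a line bundle $\to 1$, the virtual factor becomes $b^{-n}$ times the cohomological weight of $\pi$ in $\mathsf{V}^{\mathrm{coho},\DT}_{\lambda\mu\nu\rho}$ while the tautological factor becomes $b^{\,n}\,e\big((\O^{[n]}_\pi)^\vee\otimes e^m\big)(1+O(b))$. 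The powers of $b$ cancel exactly because $\rk(\sqrt{T^{\vir}_\pi})=\rk(\O^{[n]}_\pi)=|\pi|=n$ equals the virtual dimension at $\pi$ (the $T$-fixed-term subtleties of the Remark after Definition~\ref{Nekgen} occur only when everything is zero). Letting $b\to0$ thus identifies the $\pi$-summand with $q^{|\pi|}\,e\big((\O^{[n]}_\pi)^\vee\otimes e^m\big)$ times the cohomological weight of $\pi$; then $Q=qm$ and $m\to\infty$ replaces $q^{|\pi|}\,e\big((\O^{[n]}_\pi)^\vee\otimes e^m\big)$ by $Q^{|\pi|}$, since $e\big((\O^{[n]}_\pi)^\vee\otimes e^m\big)$ is monic of degree $|\pi|$ in $m$ (again because its rank is the renormalized volume $|\pi|$), leaving precisely the $\pi$-summand of $\mathsf{V}^{\mathrm{coho},\DT}_{\lambda\mu\nu\rho}(Q)$. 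Summing over $\pi$ gives the DT vertex limit; the PT vertex is handled the same way with $\chi(F)=n$ and at most two non-empty asymptotics.

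I expect the main obstacle to be the vertex-level limit: one has to carry out the $b\to 0$ asymptotics of the $K$-theoretic vertex with its half-integer powers of the $t_i$ and its non-unique square-root classes, verify that the powers of $b$ cancel via the rank identity $\rk(\sqrt{T^{\vir}_\pi})=\rk(\O^{[n]}_\pi)=|\pi|$, and — because the vertex is defined on the non-proper $\C^4$ — check that the renormalizations of $T^{\vir}_\pi$ and $\O^{[n]}_\pi$ make the relevant ranks equal to the renormalized volume. Keeping the signs $(-1)^{o(\ccL)}$, the explicit square-root choices of Section~\ref{sec:taking roots}, and the vertex signs of Conjectures~\ref{K-conj intro} and~\ref{conjCK2} matched throughout is the remaining bookkeeping, which is precisely the ``compatibility of signs'' already used in Theorems~\ref{dimred intro} and~\ref{DT/PT tauto}.
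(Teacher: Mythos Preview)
Your Part~1 argument is correct and in fact slightly more economical than the paper's: you recycle Theorem~\ref{DT/PT tauto} to take the $b\to 0$ limit globally, then observe that $m^{-n}e\big(R\Gamma(X,\O_Z)^\vee\otimes e^m\big)\to 1$ termwise (since $R\Gamma(X,\O_Z)$ is a finite virtual $T$-representation of rank $n$). The paper instead proves both limit identities directly from the vertex formalism (Theorem~\ref{vertexthm}) via Proposition~\ref{doublelimit}, which computes the double limit of each $[-\widetilde{\mathsf v}_{Z_\alpha}]\,q^{|Z_\alpha|}$ and $[-\widetilde{\mathsf e}_{Z_{\alpha\beta}}]\,q^{f(\alpha,\beta)}$. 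Your shortcut avoids redoing the vertex calculation for Part~1, though the paper needs the vertex-level limit for Part~2 anyway, so little is saved overall.

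For Part~2 your strategy is exactly the paper's: establish the vertex-level limits $\mathsf V^{\DT/\PT}_{\lambda\mu\nu\rho}\to\mathsf V^{\mathrm{coho},\DT/\PT}_{\lambda\mu\nu\rho}$ and then apply the limit to the identity of Conjecture~\ref{K-conj intro}. However, your sketch of the vertex-level limit has a real gap. You propose to split the $\pi$-summand into a ``virtual factor'' $\ch(\sqrt{K_\pi^{\vir}}^{1/2})/\ch(\Lambda^{\mdot}\sqrt{T_\pi^{\vir}}^\vee)$ and a ``tautological factor'' $\ch(\Lambda^{\mdot}(\O^{[n]}_\pi\otimes y^{-1}))/\ch(\det(\O^{[n]}_\pi\otimes y^{-1})^{1/2})$ and argue that their $b$-powers cancel because both have rank $|\pi|$. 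For curve-like $\pi$ on $\C^4$ this splitting is not available: $Z_\pi$ is an infinite Laurent series, so ``$\O^{[n]}_\pi$'' has no finite rank and the tautological factor is an infinite product. The paper's fix is precisely the redistribution of Section~\ref{sec:Kinsert}: one works with the \emph{combined} Laurent polynomial $\widetilde{\mathsf v}_\pi$, shows it has rank zero (Proposition~\ref{coholimitkey}), and then isolates the $y$-terms as $-y\overline{W}_\alpha$ with $\mathrm{rk}\,W_\alpha=|Z_\alpha|$ to carry out the $m\to\infty$ step (Proposition~\ref{doublelimit}). You correctly flag this as the main obstacle, but what you write before the obstacle paragraph does not actually get around it; the redistribution is not optional bookkeeping but the mechanism that makes both factors finite simultaneously.

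Two minor corrections: $e\big(R\Gamma(X,\O_Z)^\vee\otimes e^m\big)$ is in general a \emph{ratio} of monic polynomials with net degree $n$, not a polynomial; and ``all three vertex series have $q$-constant term $1$'' is false for nonempty asymptotics (the series starts at $q^{|\pi_{\CM}|}$ with a nontrivial coefficient). Neither affects the argument---coefficientwise limits of formal Laurent series in $Q$ commute with products regardless---but the stated reasons should be adjusted.
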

We summarise the above three limits in the following figure.
\begin{figure}[htb] 
\xymatrix@!0@R=20pt@C=25.5pt{
 & &&&& & &&   \fbox{$K$-theoretic DT/PT on toric CY 4-fold $X$ }\ar[ddddllll]_(0.5){ \mathrm{Thm}.\, \ref{dimredcor} \quad } 
\ar[ddddrrrr]^(0.6){\begin{subarray}{c} \quad t_i=e^{b\lambda_i},\,\,y=e^{bm},\,\,b\to 0 \\ \\ \mathrm{Thm}.\,\ref{DT/PT tauto} \end{subarray}} \quad \quad &&&&&&&&&&&   \\  \\  \\ \\
&&&&  \fbox{\parbox{109pt}{$K$-theoretic DT/PT \\ on toric 3-fold $D$ \small{\cite{NO}}}}\ar[dddd]_(0.47){\,\,t_i=e^{b\lambda_i},\,\, b\to 0 \,\,}  && &&&& &&
\fbox{\parbox{120pt}{Cohomological DT/PT \\ with insertions on $X$  }} 
\ar[dddd]^(0.46){\begin{subarray}{c} V=\O_X, \,\, Q=qm \\ \\ m\to \infty \\ \\ \mathrm{Thm}.\, \ref{coholimit intro} \end{subarray}}  
\ar[dddllllll]^(0.73){\quad \begin{subarray}{c}\mathrm{Rmk.\, \ref{cohoarrow}}\end{subarray}} 
\\   \\  \\ && &&&&&& \\
&&&& \fbox{\parbox{120pt}{Cohomological DT/PT \\ on toric 3-fold $D$ \small{\cite{PT2}}}}  &&&&& &&&
\fbox{\parbox{120pt}{Cohomological DT/PT \\ without insertions on $X$ \small{\cite{CK2}}}}
 \\   
}
\caption{Limits of $K$-theoretic DT/PT on toric CY 4-folds}
\label{dc1fig1}
\end{figure}
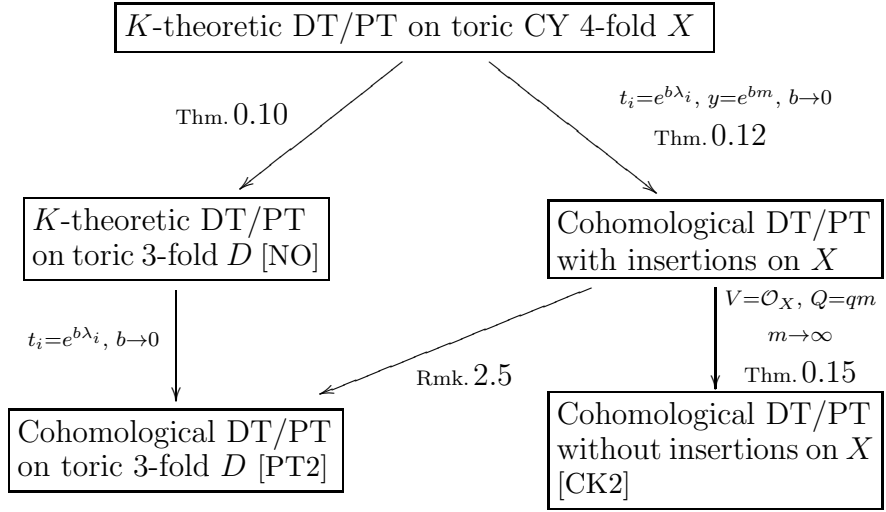

\subsection{Application: local resolved conifold}

In order to illustrate the 4-fold vertex formalism and the three limits, we present a new conjectural formula, which can be seen as a curve analogue of Nekrasov's conjecture. Let $X = D \times \mathbb{C}$, where $D = \mathrm{Tot}_{\mathbb{P}^1}(\O(-1) \oplus \O(-1))$ is the resolved conifold. Consider the generating series of $K$-theoretic stable pair invariants of $X$:  
$$
\cZ_X(y,q,Q) := \sum_{n,d} P_{n,d[\PP^1]}(\O, y)\, q^n Q^d.  
$$
\begin{conjecture} \label{localrescon}
Let $X = \mathrm{Tot}_{\PP^1}(\O(-1) \oplus \O(-1) \oplus \O)$. Then there exist unique choices of signs such that
$$
\cZ_X(y,q,Q) = \mathrm{Exp}\Big( \mathcal{F}(t,y;q,Q) \Big), \quad \mathcal{F}(t,y;q,Q) := \frac{Q\,[y]}{[t_4][y^{\frac{1}{2}} q] [y^{\frac{1}{2}} q^{-1}]},
$$
where $t_4^{-1}$ denotes the torus weight of $\O$ over $\PP^1$ and $\mathcal{F}(t,y;q,Q) \in \Q(t_4^{\frac{1}{2}},y^{\frac{1}{2}},q,Q)$ is expanded as a formal power series in $q$ and $Q$.
\end{conjecture}

This conjecture is verified modulo (more or less) $Q^5 q^6$ using the vertex formalism. 
See Proposition \ref{verif localrescon} for the precise statement. 
Applying dimensional reduction, Conjecture \ref{localrescon} implies a formula for the 
$K$-theoretic stable pair invariants of the resolved conifold $D$ recently proved by Kononov-Okounkov-Osinenko \cite{KOO}. Applying the preferred limits discussed by Arbesfeld \cite{Arb} to the formula of Kononov-Okounkov-Osinenko yields an expression obtained using the refined topological vertex by Iqbal-Koz{\c{c}}az-Vafa \cite{IKV}. Applying cohomological limit II yields a formula, which was recently conjectured in \cite{CK2}. See Appendix \ref{app:localrescon} for the details.

\subsection{Relations with other works} \label{otherwork}
This paper is a continuation of our previous work \cite{CK2}, where we introduced the DT/PT correspondence (with primary insertions) for both compact and toric Calabi-Yau 4-folds. In the compact case, ``DT=PT'' due to insertions. In loc.~cit.~we used toric calculations to support this result and found the cohomological DT/PT 4-fold vertex correspondence (Conjecture \ref{conjCK2}), which surprisingly has the same shape as the DT/PT correspondence for Calabi-Yau 3-folds \cite{PT1}. This motivated us to enhance Conjecture \ref{conjCK2} to a $K$-theoretic version using Nekrasov's insertion (Definition \ref{Nekgen}), which specializes to (i) the cohomological DT/PT correspondence for toric Calabi-Yau 4-folds,  (ii) the $K$-theoretic DT/PT correspondence for toric 3-folds \cite{NO, PT2}. 

During the writing of this paper, Piazzalunga announced\footnote{``Gauge theory and virtual invariants'', Trinity College Dublin, May 13--17, 2019.} his joint work with Nekrasov \cite{NP2}, which establishes the (more general) $K$-theoretic DT vertex for D0-D2-D4-D6-D8 bound states and realizes the DT generating series of a toric Calabi-Yau 4-fold $X$ as the partition function of a certain super-Yang-Mills theory with matter (and gauge group $\mathrm{U}(1|1$)) on $X$.

\subsection{Acknowledgements} 

This paper has a strong intellectual debt to the remarkable work of Nekrasov \cite{Nek} and Nekrasov-Piazzalunga \cite{NP} without which this paper would not exist. 
We are very grateful to Nikita Nekrasov and Nicol\`{o} Piazzalunga for correspondence and helpful explanations of \cite{Nek, NP}.
We also warmly thank Noah Arbesfeld for many useful discussions on $K$-theoretic invariants.
Y.C. is partially supported by the World Premier International Research Center Initiative (WPI), MEXT, Japan, 
the JSPS KAKENHI Grant Number JP19K23397 and Royal Society Newton International Fellowships Alumni 2019.
M.K. is supported by NWO grant VI.Vidi.192.012.
S.M. is supported by NWO grant TOP2.17.004.

\section{$K$-theoretic vertex formalism} \label{vertex}

\subsection{Fixed loci} \label{fixlocus}

This subsection is a recap of \cite[Sect.~2.1, 2.2]{CK2}. Proofs can be found in loc.~cit. Let $X$ be a toric Calabi-Yau 4-fold. Let $\Delta(X)$ be the polytope corresponding to $X$ and denote the collection of its vertices by $V(X)$ and edges by $E(X)$. The elements $\alpha \in V(X)$ correspond to the $(\C^*)^4$-fixed points $p_\alpha \in X$. Each such fixed point lies in a maximal $(\C^*)^4$-invariant affine open subset $\C^4 \cong U_\alpha \subseteq X$. The elements $\alpha\beta \in E(X)$ (connecting vertices $\alpha$, $\beta$) correspond to the $(\C^*)^4$-invariant lines $\PP^1 \cong L_{\alpha\beta} \subseteq X$ with normal bundle
\begin{align}
\begin{split} \label{normal}
&N_{L_{\alpha\beta}/X} \cong \O_{\PP^1}(m_{\alpha\beta}) \oplus  \O_{\PP^1}(m_{\alpha\beta}') \oplus  \O_{\PP^1}(m_{\alpha\beta}''), \\
&m_{\alpha\beta} + m_{\alpha\beta}' + m_{\alpha\beta}'' = -2, 
\end{split}
\end{align}
where the second equality follows from the Calabi-Yau condition.

The action of the dense open torus $(\C^*)^4$ and its Calabi-Yau subtorus $T \subseteq (\C^*)^4$ (the subtorus preserving the Calabi-Yau volume form) both lift to the Hilbert scheme $I:=I_n(X,\beta)$. The following result is proved in \cite[Lem.~2.2]{CK2}:
\begin{lemma} \label{TfixlocusDT}
The scheme $I^T=I^{(\mathbb{C}^*)^4}$ consists of finitely many reduced points.
\end{lemma}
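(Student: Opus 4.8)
The plan is to analyze the fixed locus $I^T$ chart by chart using the toric structure of $X$, reducing everything to the affine model $\C^4$, and then to show that fixed points are isolated and reduced by an explicit deformation-theoretic computation.

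\textbf{Step 1: Reduction to monomial ideals in affine charts.} First I would recall that since $X$ is covered by the maximal $(\C^*)^4$-invariant affine opens $U_\alpha \cong \C^4$, a $T$-fixed closed subscheme $Z \subseteq X$ restricts on each $U_\alpha$ to a $(\C^*)^4$-invariant subscheme (here one uses that $T$ and $(\C^*)^4$ have the same invariant subschemes, because $T$ is the Calabi-Yau subtorus of codimension one and a $T$-fixed ideal is automatically graded for the full torus — this is the content of $I^T = I^{(\C^*)^4}$). A $(\C^*)^4$-invariant ideal in $\C[x_1,x_2,x_3,x_4]$ is a monomial ideal, and the condition that $Z$ has dimension $\leqslant 1$ with proper support in class $\beta$ and fixed Euler characteristic forces the associated solid partition to have only finitely many boxes off the coordinate axes, with prescribed infinite ``legs'' (line partitions $\lambda_{\alpha\beta}$) along the axes determined by $\beta$. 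The gluing conditions on overlaps $U_\alpha \cap U_\beta$ match the legs, so the combinatorial data is: a finite collection of solid partitions with compatible asymptotics. Since $\beta$ and $n$ are fixed, there are only finitely many such configurations, hence $I^T$ is a finite set of points.

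\textbf{Step 2: Reducedness via vanishing of the $T$-fixed tangent space.} To show each such point $Z$ is reduced, it suffices to show the Zariski tangent space to $I^T$ at $Z$, namely the $T$-fixed part of $T_Z I = \Hom_X(I_Z, \O_Z)$, vanishes. By the affine cover and a Mayer--Vietoris / Čech argument, this reduces to showing that for each chart the $(\C^*)^4$-fixed part of $\Hom_{\C^4}(I_{Z_\alpha}, \O_{Z_\alpha})$ (suitably interpreted with the infinite legs, i.e.\ the ``renormalized'' or equivariant Ext with supports) has no weight-zero piece, and that the gluing contributes nothing extra. The key input is a weight computation: one writes the $T$-character of $\chi_{\C^4}(I_Z, I_Z) = \sum (-1)^i \Ext^i$ using the standard vertex/box-counting formula (the $P_Z$, $Q_Z$ generating functions of the monomial ideal), and observes that the Calabi-Yau relation $t_1 t_2 t_3 t_4 = 1$ together with the structure of this character forces every term, and in particular the tangent contribution $\Hom$, to carry only nonzero $T$-weights. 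This is exactly the local statement underlying Lemma \ref{TfixlocusDT}, and it is the type of argument already used in the 3-fold case; the 4-fold version is where the Calabi-Yau condition does real work.

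\textbf{Main obstacle.} The hard part will be the bookkeeping for the infinite legs: unlike the compact-support (points) case, the ideals $I_{Z_\alpha}$ are not finite colength, so $\Hom$ and $\Ext$ groups must be handled as equivariant $K$-theory classes / formal characters rather than honest finite-dimensional spaces, and one must check that the ``renormalized'' tangent character is a genuine (Laurent) polynomial whose weight-zero coefficient vanishes, and that passing from the affine charts to global $X$ via the gluing sequence does not create a weight-zero class (this uses $H^{>0}(\O_X) = 0$ and properness of the support). Once the character computation is set up correctly and the Calabi-Yau relation is used to cancel the potentially weight-zero terms, finiteness and reducedness both follow; I would cite \cite[Lem.~2.2]{CK2} for the detailed verification.
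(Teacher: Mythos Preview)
The paper does not give its own proof of this lemma; it simply cites \cite[Lem.~2.2]{CK2}, which is exactly the reference you invoke at the end of your sketch. Your two-step outline (monomial ideals chart-by-chart giving finiteness, then vanishing of the weight-zero part of the tangent character giving reducedness) is indeed the structure of the argument in that reference, so your proposal is consistent with what the paper does.

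One small correction worth flagging: in Step~1 your justification that a $T$-fixed ideal is ``automatically graded for the full torus'' because $T$ has codimension one in $(\C^*)^4$ is not correct as stated --- without the dimension constraint this fails (e.g.\ the $T$-invariant ideal $(1+x_1x_2x_3x_4)$ is not monomial). The actual argument uses that $\dim Z\leqslant 1$ forces the $T$-invariant support of $Z$ into the coordinate axes, so $s=x_1x_2x_3x_4$ is nilpotent on $Z$; then each $T$-weight component $I_\chi$, being a $\C[s]$-submodule of a rank-one free $\C[s]$-module whose generator must divide a power of $s$, is generated by a single monomial. Since you defer the details to \cite{CK2} anyway, this is a minor imprecision rather than a genuine gap.
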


We characterize the elements $I^{T}$ by collections of solid partitions. 
\begin{definition} \label{solid}
A solid partition $\pi$ is a sequence $\pi = \big\{\pi_{ijk} \in \Z_{ \geqslant 0} \cup \{\infty\} \big\}_{i,j,k \geqslant 1}$ satisfying:
$$
\pi_{ijk} \geqslant \pi_{i+1,j,k}, \qquad \pi_{ijk} \geqslant \pi_{i,j+1,k}, \qquad \pi_{ijk} \geqslant \pi_{i,j,k+1} \qquad \forall \,\, i,j,k \geqslant 1.
$$
This extends the notions of \emph{plane partitions} $\lambda = \{ \lambda_{ij}\}_{i,j \geqslant 1}$ (which we visualize as a pile of boxes in $\R^3$ where $\lambda_{ij}$ is the height along the $x_3$-axis) and \emph{line partitions} $\lambda = \{ \lambda_{i}\}_{i \geqslant 1}$ (which we visualize as a pile of squares in $\R^2$ where $\lambda_i$ is the height along the $x_2$-axis). Given a solid partition $\pi = \{\pi_{ijk}\}_{i,j,k \geqslant 1}$, there exist unique plane partitions $\lambda, \mu, \nu, \rho$ such that
\begin{align*}
&\pi_{ijk} = \lambda_{jk}, \qquad \forall \,\, i \gg 0,\,\, j,k \geqslant 1 \\
&\pi_{ijk} = \mu_{ik}, \qquad \forall \,\, j \gg 0, \,\, i,k \geqslant 1 \\
&\pi_{ijk} = \nu_{ij}, \,\qquad \forall \,\, k \gg 0,\,\,  i,j \geqslant 1 \\
&\pi_{ijk} = \infty \Leftrightarrow \,\, k = \rho_{ij}, \quad \forall \,\, i,j,k \geqslant 1.
\end{align*}
We refer to $\lambda, \mu, \nu, \rho$ as the \emph{asymptotic plane partitions} associated to $\pi$ in directions $1,2,3,4$ respectively. We call $\pi$ \emph{point-like}, when $\lambda = \mu = \nu = \rho =  \varnothing$. Then the \emph{size} of $\pi$ is defined by
$$
|\pi| := \sum_{i,j,k \geqslant 1} \pi_{ijk}.
$$
We call $\pi$ \emph{curve-like} when $\lambda, \mu, \nu, \rho$ have finite size $|\lambda|, |\mu|, |\nu|, |\rho|$ (not all zero). Similar to \cite{MNOP}, when $\pi$ is curve-like, we define its \emph{renormalized volume} by
$$
|\pi| := \sum_{(i,j,k,l) \in \mathbb{Z}_{\geqslant 1}^4 \atop l \leqslant \pi_{ijk}} \big( 1 - \#\{\textrm{legs containing } (i,j,k,l) \} \big).
$$
\end{definition}

Let $Z \in I^{T}$. Suppose $\C^4 \cong U_{\alpha} \subseteq X$ is a maximal $(\C^*)^4$-invariant affine open subset. There are coordinates $(x_1,x_2,x_3,x_4)$ on $U_{\alpha}$ such that
\begin{equation} \label{standardaction} 
t \cdot x_i = t_i x_i, \quad \textrm{for all } i=1,2,3,4 \, \textrm{ and } \, t = (t_1,t_2,t_3,t_4) \in (\C^*)^4. 
\end{equation} 
The restriction $Z|_{U_{\alpha}}$ is given by a $(\C^*)^4$-invariant ideal $I_Z|_{U_{\alpha}} \subseteq \C[x_1,x_2,x_3,x_4]$. Solid partitions $\pi$ which are point- or curve-like are in bijective correspondence to $(\C^*)^4$-invariant ideals $I_{Z_\pi} \subseteq \C[x_1,x_2,x_3,x_4]$ cutting out subschemes $Z_\pi \subseteq \C^4$ of dimension $\leqslant 1$ via the following formula
\begin{equation} \label{partitionideal}
I_{Z_\pi} = \left( x_1^{i-1} x_2^{j-1} x_3^{k-1} x_4^{\pi_{ijk}} \, : \, i,j,k \geqslant 1 \textrm{ \ such that \ } \pi_{ijk} < \infty   \right).
\end{equation}
Hence $Z \in I^{T}$ determines a collection of (point- or curve-like) solid partitions $\{\pi^{(\alpha)}\}_{\alpha = 1}^{e(X)}$, where $e(X)$ is the topological Euler characteristic of $X$, i.e.~the number of $(\C^*)^4$-fixed points of $X$. Let $\alpha\beta \in E(X)$ and consider the corresponding $(\C^*)^4$-invariant line $L_{\alpha\beta} \cong \PP^1$. Suppose this line given by $\{x_2=x_3=x_4=0\}$ in both charts $U_\alpha$, $U_\beta$. Let $\lambda^{(\alpha)}$, $\lambda^{(\beta)}$ be the asymptotic plane partitions of $\pi^{(\alpha)}$, $\pi^{(\beta)}$ along the $x_1$-axes in both charts. Then
\begin{equation} \label{glue}
\lambda^{(\alpha)}_{ij} = \lambda^{(\beta)}_{ij} =: \lambda_{ij}^{(\alpha\beta)} \qquad \forall \,\, i,j \geqslant 1. 
\end{equation}
A collection of point- or curve-like solid partitions $\{\pi^{(\alpha)}\}_{\alpha = 1}^{e(X)}$ satisfying \eqref{glue}, for all $\alpha, \beta = 1, \ldots, e(X)$, is said to satisfy the \emph{gluing condition}. We obtain a bijective correspondence  
\begin{align*}
&\left\{ {\boldsymbol{\pi}} = \{\pi^{(\alpha)}\}_{\alpha = 1}^{e(X)} \, : \,  \pi^{(\alpha)} \textrm{ \, point- or curve-like and satisfying \eqref{glue}} \right\} \\
&\qquad \qquad \qquad \stackrel{1-1}{\longleftrightarrow}  \\  
&Z_{{\boldsymbol{\pi}}} \in \bigcup_{\beta \in H_2(X,\Z), \, n \in \Z} I_n(X,\beta)^{T}.
\end{align*}

Suppose $\beta \neq 0$ is effective. Then for any $Z \in I^T = I_n(X,\beta)^{T}$, there exists a maximal Cohen-Macaulay subscheme $C \subseteq Z$ such that the cokernel
$$
0 \rightarrow I_Z \rightarrow I_C \rightarrow I_C / I_Z \rightarrow 0
$$
is 0-dimensional. The restriction $C|_{U_\alpha}$ is empty or corresponds to a curve-like solid partition $\pi$ with asymptotics $\lambda, \mu, \nu, \rho$. Since $C|_{U_\alpha}$ has no embedded points, the solid partition $\pi$ is entirely determined by the asymptotics $\lambda, \mu ,\nu, \rho$ as follows
\begin{equation} \label{CMsolid}
\pi_{ijk} = \left\{\begin{array}{cc} \infty & \textrm{if \, } 1 \leqslant k \leqslant \rho_{ij} \\ \max\{ \lambda_{jk}, \mu_{ik}, \nu_{ij} \}  & \textrm{otherwise.}  \end{array} \right.
\end{equation}

Using similar notation to \cite{MNOP}, for any plane partition of finite size, and $m,m',m'' \in \Z$, we define
$$
f_{m,m',m''}(\lambda) := \sum_{i,j \geqslant 1} \sum_{k=1}^{\lambda_{ij}} (1-m(i-1) - m'(j-1) - m''(k-1)).
$$
For any $\alpha\beta \in E(X)$ and finite plane partition $\lambda$, we define
\begin{equation} \label{fab}
f(\alpha,\beta) := f_{m_{\alpha\beta},m_{\alpha\beta}',m_{\alpha\beta}''}(\lambda),
\end{equation}
where $m_{\alpha\beta}, m_{\alpha\beta}', m_{\alpha\beta}''$ were defined in \eqref{normal}.
\begin{lemma} \label{chi}
Let $X$ be a toric Calabi-Yau 4-fold and let $Z \subseteq X$ be a $(\C^*)^4$-invariant closed subscheme of dimension $\leqslant 1$. Then
$$
\chi(\O_Z)  = \sum_{\alpha \in V(X)} |\pi^{(\alpha)}| + \sum_{\alpha\beta \in E(X)} f(\alpha,\beta).
$$
\end{lemma}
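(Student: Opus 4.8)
The plan is to reduce the Euler characteristic of $\O_Z$ to local data at the torus-fixed points and along the invariant lines, by a \v{C}ech-type (or Mayer--Vietoris) argument adapted to the non-compact toric situation. First I would cover $X$ by the maximal $(\C^*)^4$-invariant affine opens $U_\alpha \cong \C^4$, $\alpha \in V(X)$, noting that the double overlaps $U_\alpha \cap U_\beta$ are nonempty precisely when $\alpha\beta \in E(X)$ (and are isomorphic to $\C^3 \times \C^*$, a neighbourhood of the invariant line $L_{\alpha\beta}$), while triple overlaps vanish because every cone of the fan lies in a $4$-dimensional cone. Since $Z$ has dimension $\leqslant 1$ and is $(\C^*)^4$-invariant, $\O_Z$ is supported on the union of fixed points and invariant lines, so $\chi(\O_Z) = \sum_\alpha \chi(\O_{Z|_{U_\alpha}}) - \sum_{\alpha\beta} \chi(\O_{Z|_{U_\alpha \cap U_\beta}})$, where each term is computed $T$-equivariantly via the weight decomposition of the (finite-dimensional in each weight space) cohomology. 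The key is that these individual Euler characteristics are \emph{finite}: on $U_\alpha$, $Z|_{U_\alpha}$ is cut out by the partition ideal \eqref{partitionideal}, so $H^0(\O_{Z|_{U_\alpha}}) = \C[x_1,x_2,x_3,x_4]/I_{Z_{\pi^{(\alpha)}}}$ has a combinatorial basis indexed by the boxes of $\pi^{(\alpha)}$, and the renormalized volume is exactly the regularized count of these boxes.

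Next I would carry out the local computation. For the vertex term: the coordinate ring of $Z_\pi$ over $\C^4$ is spanned by monomials $x_1^{a}x_2^{b}x_3^{c}x_4^{d}$ with $(a+1,b+1,c+1) $ ``under'' $\pi$ and $d < \pi_{a+1,b+1,c+1}$; when $\pi$ is point-like this is a finite set of size $|\pi|$, and $\chi(\O_{Z_\pi}) = |\pi|$ follows immediately. When $\pi$ is curve-like the ring is infinite, but the ``legs'' are precisely the contributions that get subtracted off by the edge terms, and the leftover finite discrepancy is the renormalized volume $|\pi^{(\alpha)}|$ of Definition \ref{solid}; I would make this precise by writing $H^0(\O_{Z_\pi})$ as a sum over the three (or four, counting the $\rho$-legs) asymptotic directions plus a finite correction, matching the $N \gg 0$ truncation in the definition. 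For the edge term: along $L_{\alpha\beta} \cong \PP^1$ with normal bundle $\O(m)\oplus\O(m')\oplus\O(m'')$ as in \eqref{normal}, the part of $Z$ supported there is a thickening of $\PP^1$ whose cross-section is the plane partition $\lambda^{(\alpha\beta)}$; by the standard \v{C}ech computation on $\PP^1$ (exactly as in \cite{MNOP}, Lemma 5), $\chi$ of such a thickening equals $f_{m,m',m''}(\lambda^{(\alpha\beta)}) = f(\alpha,\beta)$. Finally, assembling $\chi(\O_Z) = \sum_\alpha(\text{vertex}) - \sum_{\alpha\beta}(\text{overlap correction})$ and bookkeeping which leg of which vertex is cancelled by which edge (the gluing condition \eqref{glue} guarantees the asymptotics match, so the subtraction is well-defined and leaves precisely $\sum_\alpha |\pi^{(\alpha)}| + \sum_{\alpha\beta} f(\alpha,\beta)$) yields the claimed formula.

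The main obstacle I anticipate is the bookkeeping in the curve-like case: one must be careful that the ``infinite leg'' contributions on $U_\alpha$ are each counted once in a vertex term and subtracted once in the adjacent edge term, with no double-counting and no omission, and that the renormalization (the $-(|\lambda|+|\mu|+|\nu|)\cdot N$ in the definition of $|\pi^{(\alpha)}|$ and the analogous normalization implicit in $f(\alpha,\beta)$) is consistent across the gluing. A clean way to handle this is to fix a large box $[1,N]^4$ in each chart, compute everything truncated, and then let $N \to \infty$, observing that all divergences cancel in pairs between a vertex and its incident edges because of \eqref{glue}; this is precisely the strategy of \cite{MNOP} in the $3$-fold case, and the $4$-fold version is a direct (if slightly more involved) analogue since the $x_4$-direction plays the distinguished role and the three transverse directions behave as in the threefold computation. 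Since this lemma is quoted from \cite[Sect.~2.1]{CK2}, I would in practice cite loc.~cit.~for the details and only sketch the above reduction.
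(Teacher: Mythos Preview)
Your proposal is correct and aligns with the paper's treatment: the paper does not give a proof in the text but refers to \cite[Sect.~2.1, 2.2]{CK2}, exactly as you anticipate in your final sentence, and your sketch (\v{C}ech decomposition over the maximal invariant affines, local identification of the vertex contribution with renormalized volume, and the edge contribution with $f(\alpha,\beta)$ via the standard $\PP^1$ computation as in \cite{MNOP}) is the content of the argument in loc.~cit. One small correction: the sentence ``triple overlaps vanish because every cone of the fan lies in a $4$-dimensional cone'' is not quite right---the triple overlaps $U_{\alpha\beta\gamma}$ can be nonempty (e.g.\ for $\mathrm{Tot}_{\PP^2}(\O(-1)\oplus\O(-2))$); what vanishes is $\O_Z|_{U_{\alpha\beta\gamma}}$, since a $(\C^*)^4$-invariant closed subscheme of dimension $\leqslant 1$ is supported on the closure of the $1$-skeleton and hence misses these higher-dimensional orbit intersections, which is precisely the reason you give in the following sentence.
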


The action of $(\C^*)^4$ on $X$ also lifts to the moduli space $P:=P_n(X,\beta)$ of stable pairs. Similar to \cite{PT2}, we give a description of the fixed locus $P^{(\C^*)^4}$. 

For any stable pair $(F,s)$ on $X$, the scheme-theoretic support $C_F := \mathrm{supp}(F)$ is a Cohen-Macaulay curve \cite[Lem.~1.6]{PT1}. Stable pairs with Cohen-Macaulay support curve $C$ can be described as follows \cite[Prop.~1.8]{PT1}: \\

\noindent Let $\mathfrak{m} \subseteq \O_C$ be the ideal of a finite union of closed points on $C$. A stable pair $(F,s)$ on $X$ such that $C_F = C$ and $\mathrm{supp}(Q)_{\mathrm{red}} \subseteq \mathrm{supp}(\O_C / \mathfrak{m})$ is equivalent to a subsheaf of $\varinjlim \hom(\mathfrak{m}^r, \O_C) / \O_C$. \\

This uses the natural inclusions
\begin{align*}
\hom(\mathfrak{m}^r, \O_C) &\hookrightarrow \hom(\mathfrak{m}^{r+1}, \O_C) \\
\O_C &\hookrightarrow \hom(\mathfrak{m}^r, \O_C)
\end{align*}
induced by $\mathfrak{m}^{r+1} \subseteq \mathfrak{m}^{r} \subseteq \O_C$. 

Suppose $[(F,s)] \in P^{(\C^*)^4}$, then $C_F$ is $(\C^*)^4$-fixed and determines $\{\pi^{(\alpha)}\}_{\alpha \in V(X)}$ with each $\pi^{(\alpha)}$ empty or a curve-like solid partition. Consider a maximal $(\C^*)^4$-invariant affine open subset $\C^4 \cong U_\alpha \subseteq X$. Denote the asymptotic plane partitions of $\pi := \pi^{(\alpha)}$ in directions $1,2,3,4$ by $\lambda, \mu, \nu, \rho$. As in \eqref{partitionideal}, these correspond to $(\C^*)^4$-invariant ideals
\begin{align*}
I_{Z_\lambda} &\subseteq \C[x_2,x_3,x_4], \\
I_{Z_\mu} &\subseteq \C[x_1,x_3,x_4], \\
I_{Z_\nu} &\subseteq \C[x_1,x_2,x_4], \\
I_{Z_\rho} &\subseteq \C[x_1,x_2,x_3].
\end{align*}
Define the following $\C[x_1,x_2,x_3,x_4]$-modules
\begin{align*}
M_1 &:= \C[x_1,x_1^{-1}] \otimes_{\C} \C[x_2,x_3,x_4] / I_{Z_\lambda}, \\
M_2 &:= \C[x_2,x_2^{-1}] \otimes_{\C} \C[x_1,x_3,x_4] / I_{Z_\mu}, \\
M_3 &:= \C[x_3,x_3^{-1}] \otimes_{\C} \C[x_1,x_2,x_4] / I_{Z_\nu}, \\
M_4 &:= \C[x_4,x_4^{-1}] \otimes_{\C} \C[x_1,x_2,x_3] / I_{Z_\rho}.
\end{align*}
Then \cite[Sect.~2.4]{PT2} gives 
$$
\varinjlim \hom(\mathfrak{m}^r, \O_{C|_{U_\alpha}})  \cong \bigoplus_{i=1}^4 M_i =: M,
$$
where $\mathfrak{m} = ( x_1, x_2, x_3, x_4 ) \subseteq \C[x_1,x_2,x_3,x_4]$. Each module $M_i$ comes from a ring, so it has a unit 1, which is homogeneous of degree $(0,0,0,0)$ with respect to the character group $X((\C^*)^4) = \Z^4$. We consider the quotient
\begin{equation} \label{Mmod}
M / \langle (1,1,1,1) \rangle.
\end{equation}
Then $(\C^*)^4$-equivariant stable pairs on $U_\alpha \cong \C^4$ correspond to $(\C^*)^4$-invariant $\C[x_1,x_2,x_3,x_4]$-submodules of \eqref{Mmod}. \\

\noindent \textbf{Combinatorial description of $M / \langle (1,1,1,1) \rangle$} Denote the character group of $(\C^*)^4$ by $X((\C^*)^4) = \Z^4$. For each module $M_i$, the weights $w \in \Z^4$ of its non-zero eigenspaces determine an infinite ``leg'' $\mathrm{Leg}_i \subseteq \Z^4$ along the $x_i$-axis. For each weight $w \in \Z^4$, introduce four independent vectors $\boldsymbol{1}_w$, $\boldsymbol{2}_w$, $\boldsymbol{3}_w$, $\boldsymbol{4}_w$. Then the $\C[x_1,x_2,x_3,x_4]$-module structure on $M / \langle (1,1,1,1) \rangle$ is determined by
$$
x_j \cdot \boldsymbol{i}_{w} = \boldsymbol{i}_{w+e_j},
$$
where $i,j=1, 2, 3, 4$ and $e_1, e_2, e_3, e_4$ are the standard basis vectors of $\Z^4$. Similar to the 3-fold case \cite[Sect.~2.5]{PT2}, we define regions
$$
\mathrm{I}^+ \cup \mathrm{II} \cup \mathrm{III} \cup \mathrm{IV} \cup \mathrm{I}^- = \bigcup_{i=1}^4 \mathrm{Leg}_i  \subseteq \Z^4, \quad\textrm{where}
$$
\begin{itemize}
\item $\mathrm{I}^+$ consists of the weights $w \in \Z^4$ with all coordinates non-negative \emph{and} which lie in precisely one leg. If $w \in \mathrm{I}^+$, then the corresponding weight space of $M / \langle (1,1,1,1) \rangle$ is 0-dimensional.
\item $\mathrm{I}^-$ consists of all weights $w \in \Z^4$ with at least one negative coordinate. If $w \in \mathrm{I}^-$ is supported in $\mathrm{Leg}_i$, then the corresponding weight space of $M / \langle (1,1,1,1) \rangle$ is 1-dimensional
$$
\C \cong \C \cdot \boldsymbol{i}_w \subseteq M / \langle (1,1,1,1) \rangle.
$$
\item $\mathrm{II}$ consists of all weights $w \in \Z^4$, which lie in precisely two legs. If $w \in \mathrm{II}$ is supported in $\mathrm{Leg}_i$ and $\mathrm{Leg}_j$, then the corresponding weight space of $M / \langle (1,1,1,1) \rangle$ is 1-dimensional
$$
\C \cong \C \cdot \boldsymbol{i}_w \oplus \C \cdot \boldsymbol{j}_w / \C \cdot (\boldsymbol{i}_w + \boldsymbol{j}_w) \subseteq M / \langle (1,1,1,1) \rangle.
$$
\item $\mathrm{III}$ consists of all weights $w \in \Z^4$, which lie in precisely three legs. If $w \in \mathrm{III}$ is supported in $\mathrm{Leg}_i$, $\mathrm{Leg}_j$, and $\mathrm{Leg}_k$, then the corresponding weight space of $M / \langle (1,1,1,1) \rangle$ is 2-dimensional
$$
\C^2 \cong \C \cdot \boldsymbol{i}_w \oplus \C \cdot \boldsymbol{j}_w \oplus \C \cdot \boldsymbol{k}_w / \C \cdot (\boldsymbol{i}_w + \boldsymbol{j}_w + \boldsymbol{k}_w) \subseteq M / \langle (1,1,1,1) \rangle.
$$
\item $\mathrm{IV}$ consists of all weights $w \in \Z^4$, which lie in all four legs. If $w \in \mathrm{IV}$, then the corresponding weight space of $M / \langle (1,1,1,1) \rangle$ is 3-dimensional
$$
\C^3 \cong \C \cdot \boldsymbol{1}_w \oplus \C \cdot \boldsymbol{2}_w \oplus \C \cdot \boldsymbol{3}_w \oplus \C \cdot \boldsymbol{4}_w / \C \cdot (\boldsymbol{1}_w + \boldsymbol{2}_w + \boldsymbol{3}_w + \boldsymbol{4}_w) \subseteq M / \langle (1,1,1,1) \rangle.
$$
\end{itemize}

\noindent \textbf{Box configurations}.  A box configuration is a finite collection of weights $B \subseteq \mathrm{II} \cup \mathrm{III} \cup \mathrm{IV} \cup \mathrm{I}^-$ satisfying the following property: \\

\noindent if $w = (w_1,w_2,w_3,w_4) \in \mathrm{II} \cup \mathrm{III} \cup \mathrm{IV} \cup \mathrm{I}^-$ and one of $(w_1-1,w_2,w_3,w_4)$, $(w_1,w_2-1,w_3,w_4)$, $(w_1,w_2,w_3-1,w_4)$, or $(w_1,w_2,w_3,w_4-1)$ lies in $B$ then $w \in B$. \\

A box configuration determines a $(\C^*)^4$-invariant submodule of $M / \langle (1,1,1,1) \rangle$ and therefore a $(\C^*)^4$-invariant stable pair on $U_\alpha \cong \C^4$ with cokernel of length
$$
\#  (B \cap \mathrm{II}) + 2 \cdot \#  (B \cap \mathrm{III}) + 3 \cdot \#  (B \cap \mathrm{IV}) + \#  (B \cap \mathrm{I}^-).
$$
The box configurations defined in this section do \emph{not} describe all $(\C^*)^4$-invariant submodules of $M / \langle (1,1,1,1) \rangle$. In this paper, we always work with Assumption \ref{assumption} from the introduction, i.e.~$\bigcup_n P_n(X,\beta)^{(\C^*)^4}$ is at most 0-dimensional. Then the restriction of any $T$-fixed stable pair $(F,s)$ on $X$ to any chart $U_\alpha$ has a Cohen-Macaulay support curve with at most two asymptotic plane partitions and is described by a box configuration as above. See \cite[Prop.~2.5, 2.6]{CK2}:

\begin{lemma} \label{fewlegs}
Suppose $\bigcup _n P_n(X,\beta)^{(\C^*)^4}$ is at most 0-dimensional. Then for any $[(F,s)] \in P_n(X,\beta)^{(\C^*)^4}$ and any $\alpha \in V(X)$, the Cohen-Macaulay curve $C_F |_{U_\alpha}$ has at most two asymptotic plane partitions.
\end{lemma}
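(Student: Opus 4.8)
The plan is to prove the contrapositive: if for some $T$-fixed stable pair $[(F,s)] \in P_n(X,\beta)^{(\C^*)^4}$ and some chart $U_\alpha \cong \C^4$ the Cohen--Macaulay curve $C := C_F|_{U_\alpha}$ has at least three non-empty asymptotic plane partitions, then $\bigcup_n P_n(X,\beta)^{(\C^*)^4}$ is positive-dimensional, contradicting the hypothesis. By the combinatorial description recalled above, $(\C^*)^4$-fixed stable pairs on $U_\alpha$ with support curve $C$ are exactly the $(\C^*)^4$-invariant $\C[x_1,\dots,x_4]$-submodules $N \subseteq M/\langle(1,1,1,1)\rangle$, with $\dim_\C N$ equal to the length of the cokernel $Q$. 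Replacing the chart-$U_\alpha$ datum of $(F,s)$ by such an $N$ of finite colength, while keeping the data on all other charts, glues to a genuine $(\C^*)^4$-fixed stable pair on $X$ with unchanged support curve $C_F$ — hence unchanged class $\beta$ — since a finite-colength $N$ does not alter the asymptotics along the edges.

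First I would exhibit a weight with a large weight space. If $\lambda,\mu,\nu$ are among the non-empty asymptotic partitions, then, each being non-empty, the origin $0 \in \Z^4$ lies in $\mathrm{Leg}_1 \cap \mathrm{Leg}_2 \cap \mathrm{Leg}_3$, hence in region $\mathrm{III}$ (or $\mathrm{IV}$, if also $\rho \neq \varnothing$), so $V_0 := (M/\langle(1,1,1,1)\rangle)_0$ has $\dim_\C V_0 \geqslant 2$. Since $0$ is the minimum of $\Z^4_{\geqslant 0}$, no positive-degree element of $\C[x_1,\dots,x_4]$ maps anything into $V_0$, so every line $\C v \subseteq V_0$ generates a $(\C^*)^4$-invariant submodule $N_v := \C[x_1,\dots,x_4]\cdot v$ with $(N_v)_0 = \C v$; thus distinct lines give distinct submodules, hence pairwise non-isomorphic $T$-fixed stable pairs, all of class $\beta$.

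It remains to see $\dim_\C N_v < \infty$ and to organize the $N_v$ into an algebraic family. Since $\lambda,\mu,\nu,\rho$ are \emph{finite} plane partitions, each leg $\mathrm{Leg}_i$ is all of $\Z$ in the $i$-th coordinate times a finite staircase region in the remaining three; hence every pairwise intersection $\mathrm{Leg}_i \cap \mathrm{Leg}_j$, and a fortiori each of the regions $\mathrm{II}, \mathrm{III}, \mathrm{IV}$, is a finite subset of $\Z^4$. Therefore all but finitely many weights in $\Z^4_{\geqslant 0}$ lie in $\mathrm{I}^+$, where the weight space of $M/\langle(1,1,1,1)\rangle$ vanishes; as $N_v$ is supported in $\Z^4_{\geqslant 0}$, it is finite-dimensional. (In the minimal case $\lambda=\mu=\nu$ each a single box and $\rho=\varnothing$ one even gets $x_j v = 0$ for all $j$, so $N_v = \C v$ and the colength equals $1$ for every $v$.) Forming over $\PP(V_0)$ the universal submodule generated by the tautological sub-line-bundle under the $\C[x_1,\dots,x_4]$-action, and restricting to the dense open $U \subseteq \PP(V_0)$ on which the colength $\dim_\C N_v$ attains its minimum — so the family is flat there, and one may take $U = \PP(V_0)$ in the minimal case — one obtains an injective morphism $U \to P_{n_0}(X,\beta)^{(\C^*)^4}$ for the relevant $n_0$, with $\dim U \geqslant 1$. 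This contradiction proves the lemma. (Compare \cite[Prop.~2.5, 2.6]{CK2}.)

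The main obstacle is this middle step: upgrading the pencil $\PP(V_0)$ of lines to an honest positive-dimensional family of stable pairs. One must check simultaneously that each $N_v$ has finite colength (the finiteness-of-legs argument), that the members are genuinely non-isomorphic as pairs (immediate from $(N_v)_0 = \C v$ together with the equivalence of \cite{PT1}), and that the construction is algebraic so that ``positive-dimensional'' is meaningful scheme-theoretically — the last point requiring the passage to the open locus of constant colength. Everything else is bookkeeping with the regions $\mathrm{I}^{\pm}, \mathrm{II}, \mathrm{III}, \mathrm{IV}$ already set up above.
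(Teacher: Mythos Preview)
The paper does not prove this lemma here; it defers to \cite[Prop.~2.5, 2.6]{CK2}. Your argument is the expected one and is correct in outline: when at least three legs meet at $p_\alpha$, the weight-$0$ piece $V_0$ of $M/\langle(1,1,1,1)\rangle$ has dimension $\geqslant 2$, each line in $V_0$ generates a finite-dimensional $(\C^*)^4$-invariant cyclic submodule, and since the cokernel of any $T$-fixed stable pair is supported at the fixed points $p_\gamma$ (none of which lie in an overlap $U_{\alpha\beta}$), your gluing step is valid and produces a positive-dimensional family of $T$-fixed stable pairs in class $\beta$.

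One slip to correct: the function $[v] \mapsto \dim_\C N_v$ is \emph{lower} semicontinuous on $\PP(V_0)$, so the dense open locus on which it is constant is where $\dim_\C N_v$ is \emph{maximal}, not minimal. Indeed, for each weight $w \in \Z^4_{\geqslant 0}$ the condition $x^w \cdot v \neq 0$ is the complement of a linear subspace of $V_0$, hence open; intersecting over the finitely many relevant $w$ gives the open dense locus where $\dim_\C N_v$ is maximal, and over that locus the family of cyclic submodules has constant rank and is flat. In your single-box example the length is identically $1$, so the issue does not arise there; in general just replace ``minimum'' by ``maximum''.
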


\begin{lemma}  \label{PTfixedlocus}
Suppose $\bigcup_n P_n(X,\beta)^{(\C^*)^4}$ is at most 0-dimensional. Then, for any $n \in \Z$, $P_n(X,\beta)^{T} = P_n(X,\beta)^{(\C^*)^4}$ consists of finitely many reduced points.
\end{lemma}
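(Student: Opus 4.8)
The plan is to reduce everything, chart by chart, to the combinatorial classification of $T$-invariant stable pairs (as in \cite[Sect.~2.2]{CK2}, following \cite[Sect.~2.4]{PT2}), and then to extract the equality $P_n(X,\beta)^T = P_n(X,\beta)^{(\C^*)^4}$ at the very end from finiteness. Since $P_n(X,\beta)^{(\C^*)^4} \subseteq P_n(X,\beta)^T$ is automatic, it suffices to show that $\bigcup_n P_n(X,\beta)^T$ is a finite union of reduced points: the residual torus $(\C^*)^4/T \cong \C^*$ is connected and acts on this $0$-dimensional scheme, hence acts trivially, so every $T$-fixed point is in fact $(\C^*)^4$-fixed.

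\emph{Finiteness.} Let $[(F,s)] \in P_n(X,\beta)^T$ with Cohen--Macaulay support curve $C = C_F$, a $T$-invariant curve. Every irreducible component of $C_{\mathrm{red}}$ is an irreducible $T$-invariant curve, hence the closure of a $1$-dimensional $T$-orbit; in every maximal invariant chart $U_\alpha \cong \C^4$ the only $1$-dimensional $T$-orbits are the punctured coordinate axes (on each axis even the Calabi--Yau subtorus acts with its full one-dimensional weight), so $C_{\mathrm{red}}$ is $(\C^*)^4$-invariant, and then one checks that a $T$-homogeneous Cohen--Macaulay ideal supported on a union of coordinate axes in $\C^4$ is necessarily monomial. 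Hence $C$ is $(\C^*)^4$-fixed, one of finitely many curves of class $\beta$, described in each chart by a tuple of asymptotic plane partitions $\lambda,\mu,\nu,\rho$ as in Section \ref{fixlocus}. For fixed such a $C$, stable pairs with support $C$ correspond, chart by chart and glued along \eqref{glue}, to finite-dimensional $\C[x_1,\dots,x_4]$-submodules $N \subseteq M/\langle(1,1,1,1)\rangle$, and the $T$-fixed ones to the $T$-invariant such $N$. Assumption \ref{assumption} enters here through Lemma \ref{fewlegs}: in each chart at most two of $\lambda,\mu,\nu,\rho$ are non-empty, so the regions $\mathrm{III}$ and $\mathrm{IV}$ are empty and every $(\C^*)^4$-weight space of $M/\langle(1,1,1,1)\rangle$ is at most one-dimensional. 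The key point is then a rigidity statement: under this hypothesis every finite-dimensional $T$-invariant submodule $N$ is spanned by $(\C^*)^4$-weight vectors, hence comes from a box configuration $B$ in the sense of Section \ref{fixlocus} and is in particular $(\C^*)^4$-invariant. Granting this, for each of the finitely many curves $C$ there are only finitely many box configurations $B$ with $\#(B\cap\mathrm{II}) + \#(B\cap\mathrm{I}^-) = n - \chi(\O_C)$, so $\bigcup_n P_n(X,\beta)^T$ is finite.

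\emph{Rigidity and reducedness.} For the rigidity claim one decomposes a $T$-invariant $N$ into its $T$-weight spaces, each of which lies in a finite direct sum of the at-most-one-dimensional $(\C^*)^4$-weight spaces over a coset $w + (1,1,1,1)\Z$, and argues by descending induction on the weights: if $v = \sum_{w'} c_{w'}\,\boldsymbol{i}_{w'} \in N$ had two or more non-zero terms, then --- because at most two legs meet --- there are monomials $x_1^{a_1}\cdots x_4^{a_4}$ which kill the terms of $v$ at distinct weights while forcing the corresponding pure weight vectors into $N$, and recombining yields $\boldsymbol{i}_{w'} \in N$ for every $w'$ with $c_{w'}\ne 0$, so $N$ is monomial. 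Finally, reducedness of the (now finitely many, $(\C^*)^4$-fixed) points follows from checking that the $(\C^*)^4$-fixed part of the Zariski tangent space $\Ext^1(\II^{\mdot},\II^{\mdot})_0$ vanishes at each such point --- equivalently, that the box configuration $B$ admits no equivariant deformations --- a chart-local computation with $M/\langle(1,1,1,1)\rangle$; all of this is carried out in \cite[Prop.~2.5, 2.6]{CK2}.

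\emph{Main obstacle.} The delicate step is the rigidity claim, i.e.\ excluding ``diagonal'' $T$-invariant submodules. This is genuinely subtle because, even when only two legs meet, a $T$-weight space of $M/\langle(1,1,1,1)\rangle$ can be several-dimensional: a coset $w + (1,1,1,1)\Z$ may hit the same leg in two distinct lattice points both lying in region $\mathrm{I}^-$. One must therefore really exploit closure of $N$ under multiplication by $x_1,\dots,x_4$ together with $\dim_\C N < \infty$ to force monomiality. This is precisely the $4$-fold analogue of the fact that ``at most two legs $\Rightarrow$ isolated $T$-fixed points'' in the $3$-fold theory of \cite{PT2}, and it breaks down once three or four legs are allowed, where $P^{(\C^*)^4}$ acquires positive-dimensional components.
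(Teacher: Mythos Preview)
Your approach is correct and is essentially the argument of \cite[Prop.~2.5, 2.6]{CK2}, which is precisely what the paper cites in lieu of a proof. One small point worth making explicit: Lemma~\ref{fewlegs} as stated applies to $(\C^*)^4$-fixed stable pairs, whereas you invoke it for the Cohen--Macaulay support $C$ of a merely $T$-fixed pair; the bridge is that $(\O_C,1)$ is itself a $(\C^*)^4$-fixed stable pair in class $\beta$, so Lemma~\ref{fewlegs} applies to it and hence constrains $C$. With that, your outline---$C$ is $(\C^*)^4$-fixed, at most two legs per chart, rigidity of $T$-invariant finite submodules in $M/\langle(1,1,1,1)\rangle$ forces monomiality, finitely many box configurations, reducedness from vanishing of the fixed part of $\Ext^1$---is exactly the route taken in \cite{CK2}, and you correctly identify the rigidity step as the crux.
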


\subsection{$K$-theory class of obstruction theory} 

Let $X$ be a toric Calabi-Yau 4-fold and consider the cover $\{U_\alpha\}_{\alpha \in V(X)}$ by maximal $(\C^*)^4$-invariant affine open subsets. We discuss the DT and PT case simultaneously. Let $E = I_Z$, with $Z\in I^T = I_n(X,\beta)^T$, or $E = I^\mdot$, with $[I^\mdot = \{\O_X \rightarrow F\} ] \in P^T = P_n(X,\beta)^T$. In the stable pairs case, we impose Assumption \ref{assumption} of the introduction so $P^T = P^{(\C^*)^4}$ is at most 0-dimensional by Proposition \ref{PTfixedlocus}. We are interested in the class
$$
-\dR\Hom(E,E)_0 \in K_0^{(\mathbb{C}^*)^4}(\mathrm{pt}).
$$
Note that in this section, we work with the \emph{full} torus $(\mathbb{C}^*)^4$. In the next section, we will restrict to the Calabi-Yau torus $T \subseteq (\C^*)^4$ when taking square roots.
This class can be computed by a \v{C}ech calculation introduced for smooth toric 3-folds in \cite{MNOP, PT2}. In the case of toric 4-folds, the calculation was done in \cite[Sect.~2.4]{CK2}. We briefly recall the results from loc.~cit.

Consider the exact triangle
\begin{equation*} 
E \rightarrow \O_X \rightarrow E',
\end{equation*}
where $E' = \O_Z$ when $E = I_Z$, and $E' = F$ when $E = I^\mdot$. In both cases, $E'$ is 1-dimensional. Define $U_{\alpha\beta} := U_\alpha \cap U_\beta$,\, $U_{\alpha\beta\gamma} := U_\alpha \cap U_\beta \cap U_\gamma$, etc., and let $E_\alpha := E|_{U_{\alpha}}$, $E_{\alpha\beta} := E|_{U_{\alpha\beta}}$ etc. 
The local-to-global spectral sequence, calculation of sheaf cohomology with respect to the \v{C}ech cover $\{U_\alpha\}_{\alpha \in V(X)}$, and the fact that $E'$ is 1-dimensional give

\begin{align*}
-\dR\mathrm{Hom}_X(E,E)_0 &= - \sum_{\alpha \in V(X)} \dR\mathrm{Hom}_{U_\alpha}(E_\alpha,E_\alpha)_0 + \sum_{\alpha\beta \in E(X)} \dR\mathrm{Hom}_{U_{\alpha\beta}}(E_{\alpha\beta},E_{\alpha\beta})_0.
\end{align*}

On $U_\alpha \cong \mathbb{C}^4$, we use coordinates $x_1,x_2,x_3, x_4$ such that the $(\mathbb{C}^*)^4$-action is 
$$
t \cdot x_i = t_i x_i, \quad \textrm{for all } i=1,2,3,4 \, \textrm{ and } \, t=(t_1,t_2,t_3,t_4) \in (\C^*)^4. 
$$
Denote the $T$-character of $E'|_{U_\alpha}$ by
$$
Z_\alpha := \tr_{E'|_{U_\alpha}}.
$$
In the case $E' = \O_Z$, the scheme $Z|_{U_\alpha}$ corresponds to a solid partition $\pi^{(\alpha)}$ as described in Section \ref{fixlocus} and
\begin{equation} \label{Zalpha}
Z_\alpha = \sum_{i,j,k\geqslant1} \sum_{l=1}^{\pi_{ijk}^{(\alpha)}} t_1^{i-1} t_2^{j-1} t_3^{k-1} t_4^{l-1}.
\end{equation}
When $E' = F$, we use the short exact sequence
$$
0 \rightarrow \O_C \rightarrow F \rightarrow Q \rightarrow 0,
$$
where $C$ is the Cohen-Macaulay support curve and $Q$ is the cokernel. Then
\begin{equation} \label{PTZalpha}
Z_\alpha = \tr_{\O_C|_{U_\alpha}} + \tr_{Q|_{U_\alpha}},
\end{equation}
where $\O_C|_{U_\alpha}$ is described by a solid partition $\pi^{(\alpha)}$ and $Q|_{U_\alpha}$ is described by a box configuration $B^{(\alpha)}$ as in Section \ref{fixlocus} (by Assumption \ref{assumption}). In this case, $\tr_{\O_C|_{U_\alpha}}$ is given by the RHS of \eqref{Zalpha}. Moreover, $\tr_{Q|_{U_\alpha}}$ is the sum of $t^w$ over all $w \in B^{(\alpha)}$.

For any $\alpha\beta \in E(X)$, we consider
$$
Z_{\alpha\beta} := \tr_{E'|_{U_{\alpha\beta}}}.
$$
In both cases, $E = I_Z$ and $E = I^\mdot$, there is an underlying Cohen-Macaulay curve $C|_{U_{\alpha\beta}}$. Suppose in both charts $U_\alpha, U_\beta$, the line $L_{\alpha\beta} \cong \PP^1$ is given by $\{x_2 = x_3 = x_4 = 0\}$. Note that $U_{\alpha\beta} \cong \C^* \times \C^3$. Then $C|_{U_{\alpha\beta}}$ is described by a point-like plane partition $\lambda_{\alpha\beta}$ (its cross-section along the $x_1$-axis) and
\begin{equation} \label{Zalphabeta}
Z_{\alpha\beta} = \sum_{j,k \geqslant 1} \sum_{l=1}^{\lambda_{\alpha\beta}} t_2^{j-1} t_3^{k-1} t_4^{l-1}
\end{equation}

Using an equivariant resolution of $E_\alpha$, $E_{\alpha\beta}$, one readily obtains the following formulae for the $T$-representations of  $-\dR\mathrm{Hom}(E_\alpha,E_\alpha)_0$, $\dR\mathrm{Hom}(E_{\alpha\beta},E_{\alpha\beta})_0$ (see \cite[Sect.~2.4]{CK2}, which is based on the original calculation in \cite{MNOP})
\begin{align} 
\begin{split} \label{defVEprelim}
\tr_{-\dR\mathrm{Hom}(E_\alpha,E_\alpha)_0} &=  Z_\alpha + \frac{\overline{Z}_{\alpha}}{t_1t_2t_3t_4} - \frac{P_{1234}}{t_1t_2t_3t_4} Z_\alpha \overline{Z}_\alpha, \\
-\tr_{-\dR\mathrm{Hom}(E_{\alpha\beta},E_{\alpha\beta})_0} &=  \delta(t_1) \left( -Z_{\alpha\beta} + \frac{\overline{Z}_{\alpha\beta}}{t_2t_3t_4} - \frac{P_{234}}{t_2t_3t_4} Z_{\alpha\beta} \overline{Z}_{\alpha\beta} \right),
\end{split}
\end{align}
where $\overline{(\cdot)}$ is the involution on $K_0^T(\mathrm{pt})$ mentioned in the introduction and
\begin{align}
\begin{split} \label{defdelta}
\delta(t) &:= \sum_{n \in \Z} t^{n}, \\
P_{1234} &:= (1-t_1)(1-t_2)(1-t_3)(1-t_4), \\
P_{234} &:= (1-t_2)(1-t_3)(1-t_4).
\end{split}
\end{align}
As in \cite{MNOP}, one has to be careful about the precise meaning of \eqref{defVEprelim}. For instance, in the DT case, when $\pi^{(\alpha)}$ is point-like, the formula for $\tr_{-\dR\mathrm{Hom}(E_\alpha,E_\alpha)_0}$ in \eqref{defVEprelim} is  Laurent polynomial in the variables $t_i$. However, when $\pi^{(\alpha)}$ is curve-like, the infinite sum \eqref{Zalpha} for $Z_{\alpha}$ first has to be expressed as a rational function and $\tr_{-\dR\mathrm{Hom}(E_\alpha,E_\alpha)_0}$ is then viewed as a rational function in the variables $t_i$.

The problem with \eqref{defVEprelim} is that it consists of \emph{rational functions} in $t_1,t_2,t_3,t_4$. From \cite{MNOP}, we learn how to redistribute terms in such a ways that we obtain \emph{Laurent polynomials}. Let $\beta_1,\beta_2,\beta_3,\beta_4$ be the vertices neighbouring $\alpha$. Define\,\footnote{We only write down $\mathsf{F}_{\alpha\beta}$ and $\mathsf{E}_{\alpha\beta}$ when $L_{\alpha\beta} \cong \PP^1$ is given by $\{x_2=x_3=x_4=0\}$, i.e.~the leg along the $x_1$-axis. The other cases follow by symmetry.}
\begin{align}
\begin{split} \label{defVEF}
\mathsf{F}_{\alpha\beta} &:= -Z_{\alpha\beta} + \frac{\overline{Z}_{\alpha\beta}}{t_2t_3t_4}  - \frac{P_{234}}{t_2t_3t_4} Z_{\alpha\beta} \overline{Z}_{\alpha\beta}, \\
\mathsf{V}_\alpha &:= \tr_{-\dR\mathrm{Hom}(E_\alpha,E_\alpha)_0} + \sum_{i=1}^{4} \frac{\mathsf{F}_{\alpha\beta_i}(t_{i'},t_{i''},t_{i'''})}{1-t_i}, \\
\mathsf{E}_{\alpha\beta} &:= t_1^{-1} \frac{\mathsf{F}_{\alpha\beta}(t_2,t_3,t_4)}{1-t_1^{-1}} - \frac{\mathsf{F}_{\alpha\beta}(t_2t_1^{-m_{\alpha\beta}},t_3t_1^{-m'_{\alpha\beta}},t_4 t_1^{-m''_{\alpha\beta}})}{1-t_1^{-1}},
\end{split}
\end{align}
where $\{t_{i}, t_{i'}, t_{i''}, t_{i'''}  \} = \{t_1, t_2, t_3, t_4\}$ and $$(t_1,t_2,t_3,t_4) \mapsto (t_1^{-1}, t_2 t_1^{-m_{\alpha\beta}},  t_3 t_1^{-m'_{\alpha\beta}},  t_4 t_1^{-m''_{\alpha\beta}})$$ corresponds to the coordinate transformation $U_\alpha \rightarrow U_\beta$ and $m_{\alpha\beta}, m'_{\alpha\beta}, m''_{\alpha\beta}$ are the weights of the normal bundle of $L_{\alpha\beta}$ defined in \eqref{normal}. Then
$$
\tr_{-\dR\mathrm{Hom}(E,E)_0} = \sum_{\alpha \in V(X)} \mathsf{V}_\alpha + \sum_{\alpha\beta \in E(X)} \mathsf{E}_{\alpha\beta},
$$
and $\mathsf{V}_\alpha$, $\mathsf{E}_{\alpha\beta}$ are Laurent polynomials for all $\alpha \in V(X)$ and $\alpha\beta \in E(X)$ by \cite[Prop.~2.11]{CK2}.

\begin{remark}
When we want to stress the dependence on $Z_\alpha$, $Z_{\alpha\beta}$ and distinguish between the DT/PT case, we write $\mathsf{V}_{Z_\alpha}^{\DT}$, $\mathsf{V}_{Z_\alpha}^{\PT}$, $\mathsf{E}_{Z_{\alpha\beta}}^{\DT}$, $\mathsf{E}_{Z_{\alpha\beta}}^{\PT}$ for the classes introduced in \eqref{defVEF}.
\end{remark}

\subsection{Taking square roots} \label{sec:taking roots} 

Let $\alpha \in V(X)$. As before, denote by $\beta_1, \ldots, \beta_4 \in V(X)$ the vertices neighbouring $\alpha$ and labelled such that $L_{\alpha\beta_i} = \{x_{i'}=x_{i''} = x_{i'''} = 0\}$, where $\{i',i'',i'''\} = \{1,2,3,4\} \setminus \{i\}$. We define
\begin{align}
\begin{split} \label{defvef1} 
\mathsf{v}_\alpha &:= Z_\alpha - \overline{P}_{123} Z_\alpha \overline{Z}_\alpha  + \sum_{i=1}^{3} \frac{\mathsf{f}_{\alpha\beta_i}(t_{i'},t_{i''},t_{i'''})}{1-t_i} \\
&+ \frac{1}{(1-t_4)} \Big\{ - Z_{\alpha\beta_4} + \overline{P}_{123} \big( \overline{Z}_\alpha Z_{\alpha\beta_4} - Z_\alpha \overline{Z}_{\alpha\beta_4} \big) + \frac{\overline{P}_{123}}{1-t_4} Z_{\alpha\beta_4} \overline{Z}_{\alpha\beta_4} \Bigg\}, \\
\mathsf{e}_{\alpha\beta_1} &:= t_1^{-1} \frac{\mathsf{f}_{\alpha\beta_1}(t_2,t_3,t_4)}{1-t_1^{-1}} - \frac{\mathsf{f}_{\alpha\beta_1}(t_2t_1^{-m_{\alpha\beta_1}},t_3t_1^{-m'_{\alpha\beta_1}},t_4 t_1^{-m''_{\alpha\beta_1}})}{1-t_1^{-1}}, \\
\mathsf{e}_{\alpha\beta_2} &:= t_2^{-1} \frac{\mathsf{f}_{\alpha\beta_2}(t_1,t_3,t_4)}{1-t_2^{-1}} - \frac{\mathsf{f}_{\alpha\beta_2}(t_1t_2^{-m_{\alpha\beta_2}},t_3t_2^{-m'_{\alpha\beta_2}},t_4 t_2^{-m''_{\alpha\beta_2}})}{1-t_2^{-1}}, \\
\mathsf{e}_{\alpha\beta_3} &:= t_3^{-1} \frac{\mathsf{f}_{\alpha\beta_3}(t_1,t_2,t_4)}{1-t_3^{-1}} - \frac{\mathsf{f}_{\alpha\beta_3}(t_1t_3^{-m_{\alpha\beta_3}},t_2t_3^{-m'_{\alpha\beta_3}},t_4 t_3^{-m''_{\alpha\beta_3}})}{1-t_3^{-1}}, \\
\mathsf{e}_{\alpha\beta_4} &:= t_4^{-1} \frac{\mathsf{f}_{\alpha\beta_4}(t_1,t_2,t_3)}{1-t_4^{-1}} - \frac{\mathsf{f}_{\alpha\beta_4}(t_1t_4^{-m_{\alpha\beta_4}},t_2t_4^{-m'_{\alpha\beta_4}},t_3 t_4^{-m''_{\alpha\beta_4}})}{1-t_4^{-1}}, \\
\mathsf{f}_{\alpha\beta_1} &:= -Z_{\alpha\beta_1}  + \frac{P_{23}}{t_2t_3} Z_{\alpha\beta_1} \overline{Z}_{\alpha\beta_1}, \\
\mathsf{f}_{\alpha\beta_2} &:= -Z_{\alpha\beta_2}  + \frac{P_{13}}{t_1t_3} Z_{\alpha\beta_2} \overline{Z}_{\alpha\beta_2}, \\
\mathsf{f}_{\alpha\beta_3} &:= -Z_{\alpha\beta_3}  + \frac{P_{12}}{t_1t_2} Z_{\alpha\beta_3} \overline{Z}_{\alpha\beta_3}, \\
\mathsf{f}_{\alpha\beta_4} &:= -Z_{\alpha\beta_4}  + \frac{P_{12}}{t_1t_2} Z_{\alpha\beta_4} \overline{Z}_{\alpha\beta_4}, 
\end{split}
\end{align}
where $P_{23} :=(1-t_2)(1-t_3)$ etc. In these formulae, there is an asymmetry with respect to the fourth leg, which we discuss below in Remark \ref{asymmetry}. Also note that these formulae become symmetric in 1,2,3 when the fourth leg is empty, i.e.~$Z_{\alpha\beta_4} = 0$. Recall that in the PT case, we impose Assumption \ref{assumption} and there is no fourth leg. We stress that, at this point, we do \emph{not} yet impose the relation $t_1t_2t_3t_4=1$.

The first observation is that $\mathsf{v}_\alpha$ and $\mathsf{e}_{\alpha\beta}$ are Laurent polynomials in the variables $t_1,t_2,t_3,t_4$. Indeed in the expression for $\mathsf{e}_{\alpha\beta_i}$ \eqref{defvef1}, both numerator and denominator vanish at $t_i = 1$, so the pole in $t_i=1$ cancels. In order to see that $\mathsf{v}_{\alpha}$ is a Laurent polynomial, one shows that it has no pole in $t_i=1$ for each $i=1,2,3,4$. Indeed, substituting
\begin{equation*} 
Z_\alpha = \frac{Z_{\alpha\beta_i}}{1-t_i} + \cdots
\end{equation*}
into \eqref{defvef1}, where $\cdots$ does not contain poles in $t_i=1$, one finds that all poles in $t_i=1$ cancel.

Now we restrict to the Calabi-Yau torus $t_1t_2t_3t_4=1$. Using definition \eqref{defvef1}, we find
\begin{align*} 
\begin{split} 
\mathsf{V}_\alpha = \mathsf{v}_\alpha + \overline{\mathsf{v}}_\alpha, \quad \mathsf{E}_{\alpha\beta} = \mathsf{e}_{\alpha\beta} + \overline{\mathsf{e}}_{\alpha\beta},
\end{split}
\end{align*}
for all $\alpha \in V(X)$ and $\alpha\beta \in E(X)$. This follows from a straight-forward calculation using $t_1t_2t_3t_4=1$ and the following two identities (and their permutations)
\begin{equation*}
P_{123} + \overline{P}_{123} = P_{1234}, \quad -\frac{P_{23}}{t_2t_3} + t_1 \frac{\overline{P}_{23}}{t_2^{-1} t_3^{-1}} = \frac{P_{234}}{t_2t_3t_4}. 
\end{equation*}

Finally we note that, after setting $t_1t_2t_3t_4=1$, $\mathsf{v}_\alpha$ and $\mathsf{e}_{\alpha\beta}$ do not have $T$-fixed part with positive coefficient. This follows from the fact that $\mathsf{V}_\alpha$ and $\mathsf{E}_{\alpha\beta}$ do not have $T$-fixed terms with positive coefficient (see comments in \cite[Def.~2.12]{CK2}). We therefore established the following:
\begin{lemma} \label{Ksqrt}
The classes $\mathsf{v}_\alpha$, $\mathsf{e}_{\alpha\beta}$ are Laurent polynomials in $t_1,t_2,t_3,t_4$. When $t_1t_2t_3t_4=1$, they satisfy the following equations
\begin{align*}
\mathsf{V}_\alpha = \mathsf{v}_\alpha + \overline{\mathsf{v}}_\alpha, \quad \mathsf{E}_{\alpha\beta} = \mathsf{e}_{\alpha\beta} + \overline{\mathsf{e}}_{\alpha\beta}. 
\end{align*}
Moreover, for $t_1t_2t_3t_4=1$, $\mathsf{v}_\alpha$, $\mathsf{e}_{\alpha\beta}$ do not have $T$-fixed part with positive coefficient.
\end{lemma}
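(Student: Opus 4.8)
The plan is to prove the three assertions of the lemma in order, following the reorganization already sketched in the two paragraphs preceding the statement: first Laurent polynomiality by pole cancellation along the hyperplanes $t_i=1$, then the two splitting identities by a direct computation using the relations among the $P_{\bullet}$'s, and finally the positivity statement as a formal consequence of the splitting and of \cite[Def.~2.12]{CK2}.

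\emph{Laurent polynomiality.} Each $\mathsf f_{\alpha\beta_i}$ is already a Laurent polynomial, since $Z_{\alpha\beta_i}$ is a finite sum as in \eqref{Zalphabeta}, so a priori $\mathsf v_\alpha$ and $\mathsf e_{\alpha\beta}$ are rational with poles only along $t_i=1$. For $\mathsf e_{\alpha\beta_i}$ the only denominator in \eqref{defvef1} is $1-t_i^{-1}$, and at $t_i=1$ the coordinate change $U_\alpha\to U_{\beta_i}$ degenerates to the identity, so the two numerators agree there and the pole cancels. For $\mathsf v_\alpha$ I would substitute the Laurent expansions $Z_\alpha=\frac{Z_{\alpha\beta_i}}{1-t_i}+(\text{regular at }t_i=1)$ and $\overline{Z}_\alpha=\frac{\overline{Z}_{\alpha\beta_i}}{1-t_i^{-1}}+(\text{regular at }t_i=1)$ — valid because the asymptotic plane partition of $\pi^{(\alpha)}$ along direction $i$ is exactly $\lambda_{\alpha\beta_i}$ — into \eqref{defvef1} and compare principal parts. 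For $i=1,2,3$ the simple pole of $\mathsf f_{\alpha\beta_i}/(1-t_i)$ along $t_i=1$ is cancelled by the one produced by the quadratic term $-\overline{P}_{123}Z_\alpha\overline{Z}_\alpha$ (note $\overline{P}_{123}$ carries a factor $1-t_i^{-1}$, so the $t_4$-block stays regular along $t_i=1$). For $i=4$ the situation is genuinely $4$-dimensional: there is an a priori \emph{double} pole, produced jointly by $Z_\alpha$ against $-Z_{\alpha\beta_4}/(1-t_4)$, by $\frac{\overline{P}_{123}}{1-t_4}(\overline{Z}_\alpha Z_{\alpha\beta_4}-Z_\alpha\overline{Z}_{\alpha\beta_4})$, by $\frac{\overline{P}_{123}}{(1-t_4)^2}Z_{\alpha\beta_4}\overline{Z}_{\alpha\beta_4}$, and by $-\overline{P}_{123}Z_\alpha\overline{Z}_\alpha$, and one must check that both the order-two and the order-one parts of the Laurent tails cancel. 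I expect this $t_4=1$ cancellation to be the main obstacle; it is the phenomenon absent from the $3$-fold vertex, and it is why the formulae \eqref{defvef1} are asymmetric in the fourth leg (cf.\ Remark \ref{asymmetry}). In the PT case there is no fourth leg by Lemma \ref{fewlegs}, so $Z_{\alpha\beta_4}=0$ and this difficulty disappears.

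\emph{The splitting identities.} Now impose $t_1t_2t_3t_4=1$, expand $\mathsf v_\alpha+\overline{\mathsf v}_\alpha$ and $\mathsf e_{\alpha\beta}+\overline{\mathsf e}_{\alpha\beta}$ from \eqref{defvef1}, and match them term by term with $\mathsf V_\alpha$, $\mathsf E_{\alpha\beta}$ as given by \eqref{defVEF} and \eqref{defVEprelim}. The only non-bookkeeping inputs are the elementary relation $\overline{P}_J=(-1)^{|J|}\big(\prod_{j\in J}t_j\big)^{-1}P_J$ and its two consequences under $t_1t_2t_3t_4=1$, together with all their permutations:
\[
P_{123}+\overline{P}_{123}=P_{1234},\qquad -\frac{P_{23}}{t_2t_3}+t_1\,\frac{\overline{P}_{23}}{t_2^{-1}t_3^{-1}}=\frac{P_{234}}{t_2t_3t_4}.
\]
The quadratic pieces combine as $-(P_{123}+\overline{P}_{123})Z_\alpha\overline{Z}_\alpha=-P_{1234}Z_\alpha\overline{Z}_\alpha$, reproducing the quadratic term of $\mathsf V_\alpha$ in \eqref{defVEprelim}; the $1/(1-t_i)$-blocks containing $Z_{\alpha\beta_i}$ reassemble into $\mathsf F_{\alpha\beta_i}/(1-t_i)$ after the second identity; and for the edge one additionally notes that $\mathsf e_{\alpha\beta}$ and $\mathsf E_{\alpha\beta}$ are obtained by applying the \emph{same} operator to $\mathsf f_{\alpha\beta}$ resp.\ $\mathsf F_{\alpha\beta}$, so that its interaction with $\overline{(\cdot)}$ under the coordinate change $U_\alpha\to U_\beta$ (which preserves $t_1t_2t_3t_4=1$) reduces $\mathsf E_{\alpha\beta}=\mathsf e_{\alpha\beta}+\overline{\mathsf e}_{\alpha\beta}$ to the same $P_\bullet$-identities.

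\emph{No positive $T$-fixed part.} A monomial $t_1^{w_1}t_2^{w_2}t_3^{w_3}t_4^{w_4}$ restricts to $1$ on $T$ precisely when $(w_1,w_2,w_3,w_4)\in\Z\cdot(1,1,1,1)$, so the $T$-fixed part of a class in $K_0^T(\mathrm{pt})$ is its constant term after imposing $t_1t_2t_3t_4=1$, and the involution $\overline{(\cdot)}$, which sends $w\mapsto -w$, fixes it. Hence, writing $c\in\Z$ for the $T$-fixed part of $\mathsf v_\alpha$, the splitting identity just proved gives that the $T$-fixed part of $\mathsf V_\alpha$ equals $2c$; since $\mathsf V_\alpha$ has no $T$-fixed term with positive coefficient by \cite[Def.~2.12]{CK2}, we obtain $2c\leqslant 0$, hence $c\leqslant 0$. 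Applying the identical argument to $\mathsf E_{\alpha\beta}=\mathsf e_{\alpha\beta}+\overline{\mathsf e}_{\alpha\beta}$ finishes the proof.
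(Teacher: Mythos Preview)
Your proposal is correct and follows essentially the same approach as the paper's own argument, which is given in the paragraphs immediately preceding the lemma statement. You supply more detail than the paper does---in particular you correctly flag the double pole at $t_4=1$ and identify the four terms contributing to it (the paper simply asserts that after substituting $Z_\alpha=\frac{Z_{\alpha\beta_i}}{1-t_i}+\cdots$ all poles cancel), and you spell out the doubling argument for the $T$-fixed part (the paper only says this ``follows from'' the corresponding fact for $\mathsf{V}_\alpha$, $\mathsf{E}_{\alpha\beta}$).
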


\begin{remark}
When we want to stress the dependence on $Z_\alpha$, $Z_{\alpha\beta}$ and distinguish between the DT/PT case, we write $\mathsf{v}_{Z_\alpha}^{\DT}$, $\mathsf{v}_{Z_\alpha}^{\PT}$, $\mathsf{e}_{Z_{\alpha\beta}}^{\DT}$, $\mathsf{e}_{Z_{\alpha\beta}}^{\PT}$ for the classes introduced in this subsection.
\end{remark}

\begin{remark} \label{differentsqrt}
In the case of $I_n(\C^4,0)$, the possibility of an explicit choice of of square root of $\mathsf{V}_{\alpha}$ first appeared in \cite{NP} and was kindly explained to us by Piazzalunga. The choice of square root in \eqref{defvef1} is non-unique. For instance, in the case of $I_n(\C^4,0)$, \cite{NP} instead work with $\mathsf{v}_\alpha = Z_\alpha - P_{123} Z_\alpha \overline{Z}_\alpha$. Our choice is convenient when taking limits in Section \ref{limitssec}.
\end{remark}

\begin{remark} \label{asymmetry}
Our choice of square root \eqref{defvef1} is asymmetric in the indices $1,2,3,4$, i.e.~index $4$ is singled out. Later, when we add the insertion of Definition \ref{Nekgen} (giving $\widetilde{\mathsf{v}}_\alpha$ in \eqref{defvef2}), we want to single out the fourth direction. Putting $y=t_4$, we want $[-\widetilde{\mathsf{v}}_\alpha]$ equal to zero when $Z_\alpha$ is not scheme theoretically supported in the hyperplane $\{x_4=0\}$ and we want $[-\widetilde{\mathsf{v}}_\alpha]$ equal to the vertex of DT/PT theory of the toric 3-fold $\{x_4 = 0\} \cong \C^3$ when $Z_\alpha$ is scheme theoretically supported in $\{x_4=0\}$ (Proposition \ref{dimredprop}).
\end{remark}

\subsection{$K$-theoretic insertion} \label{sec:Kinsert}

We turn our attention to the $K$-theoretic insertion in Definition \ref{Nekgen}. Using $\mathsf{v}_{\alpha}$, $\mathsf{e}_{\alpha\beta}$ defined in the previous section, we define
\begin{align} 
\widetilde{\mathsf{v}}_\alpha &:= \mathsf{v}_\alpha - y \overline{Z}_\alpha + \sum_{i=1}^4 \frac{y \overline{Z}_{\alpha \beta_i}(t_{i'},t_{i''},t_{i'''})}{1-t_i^{-1}},  \label{defvef2}  \\
\widetilde{\mathsf{e}}_{\alpha\beta_1} &:= \mathsf{e}_{\alpha\beta_1} + \frac{t_1}{1-t_1} y \overline{Z}_{\alpha\beta_1}(t_2,t_3,t_4)  - \frac{1}{1-t_1} y \overline{Z}_{\alpha\beta_1}(t_2t_1^{-m_{\alpha\beta_1}},t_3t_1^{-m'_{\alpha\beta_1}},t_4 t_1^{-m''_{\alpha\beta_1}}), \nonumber \\
\widetilde{\mathsf{e}}_{\alpha\beta_2} &:= \mathsf{e}_{\alpha\beta_2} + \frac{t_2}{1-t_2} y \overline{Z}_{\alpha\beta_2}(t_1,t_3,t_4)  - \frac{1}{1-t_2} y \overline{Z}_{\alpha\beta_2}(t_1t_2^{-m_{\alpha\beta_2}},t_3t_2^{-m'_{\alpha\beta_2}},t_4 t_2^{-m''_{\alpha\beta_2}}), \nonumber \\
\widetilde{\mathsf{e}}_{\alpha\beta_3} &:= \mathsf{e}_{\alpha\beta_3} + \frac{t_3}{1-t_3} y \overline{Z}_{\alpha\beta_3}(t_1,t_2,t_4)  - \frac{1}{1-t_3} y \overline{Z}_{\alpha\beta_3}(t_1t_3^{-m_{\alpha\beta_3}},t_2t_3^{-m'_{\alpha\beta_3}},t_4 t_3^{-m''_{\alpha\beta_3}}), \nonumber \\
\widetilde{\mathsf{e}}_{\alpha\beta_4} &:= \mathsf{e}_{\alpha\beta_4} + \frac{t_4}{1-t_4} y \overline{Z}_{\alpha\beta_4}(t_1,t_2,t_3)  - \frac{1}{1-t_4} y \overline{Z}_{\alpha\beta_4}(t_1t_4^{-m_{\alpha\beta_4}},t_2t_4^{-m'_{\alpha\beta_4}},t_3 t_4^{-m''_{\alpha\beta_4}}). \nonumber
\end{align}
Then 
\begin{align*}
\widetilde{\mathsf{v}}_\alpha &= Z_\alpha - y \overline{Z}_\alpha - \overline{P}_{123} Z_\alpha \overline{Z}_\alpha  + \sum_{i=1}^{3} \frac{\widetilde{\mathsf{f}}_{\alpha\beta_i}(t_1,t_2,t_3,t_4)}{1-t_i} \\
&+ \frac{1}{(1-t_4)} \Big\{ - Z_{\alpha\beta_4}  - t_4 y \overline{Z}_{\alpha\beta_4}+ \overline{P}_{123} \big( \overline{Z}_\alpha Z_{\alpha\beta_4} - Z_\alpha \overline{Z}_{\alpha\beta_4} \big) + \frac{\overline{P}_{123}}{1-t_4} Z_{\alpha\beta_4} \overline{Z}_{\alpha\beta_4} \Bigg\}, \\
\widetilde{\mathsf{e}}_{\alpha\beta_1} &= t_1^{-1} \frac{\widetilde{\mathsf{f}}_{\alpha\beta_1}(t_1,t_2,t_3,t_4)}{1-t_1^{-1}} - \frac{\widetilde{\mathsf{f}}_{\alpha\beta_1}(t_1^{-1},t_2t_1^{-m_{\alpha\beta_1}},t_3t_1^{-m'_{\alpha\beta_1}},t_4 t_1^{-m''_{\alpha\beta_1}})}{1-t_1^{-1}}, \\
\widetilde{\mathsf{e}}_{\alpha\beta_2} &= t_2^{-1} \frac{\widetilde{\mathsf{f}}_{\alpha\beta_2}(t_1,t_2,t_3,t_4)}{1-t_2^{-1}} - \frac{\widetilde{\mathsf{f}}_{\alpha\beta_2}(t_1t_2^{-m_{\alpha\beta_2}},t_2^{-1},t_3t_2^{-m'_{\alpha\beta_2}},t_4 t_2^{-m''_{\alpha\beta_2}})}{1-t_2^{-1}}, \\
\widetilde{\mathsf{e}}_{\alpha\beta_3} &= t_3^{-1} \frac{\widetilde{\mathsf{f}}_{\alpha\beta_3}(t_1,t_2,t_3,t_4)}{1-t_3^{-1}} - \frac{\widetilde{\mathsf{f}}_{\alpha\beta_3}(t_1t_3^{-m_{\alpha\beta_3}},t_2t_3^{-m'_{\alpha\beta_3}},t_3^{-1},t_4 t_3^{-m''_{\alpha\beta_3}})}{1-t_3^{-1}}, \\
\widetilde{\mathsf{e}}_{\alpha\beta_4} &= t_4^{-1} \frac{\widetilde{\mathsf{f}}_{\alpha\beta_4}(t_1,t_2,t_3,t_4)}{1-t_4^{-1}} - \frac{\widetilde{\mathsf{f}}_{\alpha\beta_4}(t_1t_4^{-m_{\alpha\beta_4}},t_2t_4^{-m'_{\alpha\beta_4}},t_3 t_4^{-m''_{\alpha\beta_4}},t_4^{-1})}{1-t_4^{-1}}, \\
\widetilde{\mathsf{f}}_{\alpha\beta_1}&(t_1,t_2,t_3,t_4) := -Z_{\alpha\beta_1} - t_1y \overline{Z}_{\alpha\beta_1}  + \frac{P_{23}}{t_2t_3} Z_{\alpha\beta_1} \overline{Z}_{\alpha\beta_1}, \\
\widetilde{\mathsf{f}}_{\alpha\beta_2}&(t_1,t_2,t_3,t_4) := -Z_{\alpha\beta_2} - t_2y \overline{Z}_{\alpha\beta_2}  + \frac{P_{13}}{t_1t_3} Z_{\alpha\beta_2} \overline{Z}_{\alpha\beta_2}, \\
\widetilde{\mathsf{f}}_{\alpha\beta_3}&(t_1,t_2,t_3,t_4) := -Z_{\alpha\beta_3}  - t_3y \overline{Z}_{\alpha\beta_3} + \frac{P_{12}}{t_1t_2} Z_{\alpha\beta_3} \overline{Z}_{\alpha\beta_3}, \\
\widetilde{\mathsf{f}}_{\alpha\beta_4}&(t_1,t_2,t_3,t_4) := -Z_{\alpha\beta_4} - t_4y \overline{Z}_{\alpha\beta_4}  + \frac{P_{12}}{t_1t_2} Z_{\alpha\beta_4} \overline{Z}_{\alpha\beta_4}.
\end{align*}

\begin{lemma} 
The classes $\widetilde{\mathsf{v}}_\alpha$, $\widetilde{\mathsf{e}}_{\alpha\beta}$ are Laurent polynomials in $t_1,t_2,t_3,t_4$. 
\end{lemma}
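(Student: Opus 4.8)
The plan is to reduce everything to Lemma \ref{Ksqrt}: since $\mathsf{v}_\alpha$ and $\mathsf{e}_{\alpha\beta}$ are already Laurent polynomials, it suffices to check that the extra $y$-dependent terms appearing in \eqref{defvef2} introduce no poles. First I would record two facts used throughout. Each edge class $Z_{\alpha\beta_i}$ is a genuine Laurent polynomial, because the underlying cross-section partition is finite, hence so is each $\overline{Z}_{\alpha\beta_i}$ and each monomial substitute $\overline{Z}_{\alpha\beta_i}(t_{j}t_i^{-m},\dots)$. By contrast $Z_\alpha$ is in general only a rational function, but all of its poles lie along the hyperplanes $\{t_i=1\}$, $i=1,2,3,4$; this is exactly the input used (for $\mathsf{v}_\alpha$) in the discussion preceding the statement.

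For the edge terms I would fix $i$ and combine the two new summands of $\widetilde{\mathsf{e}}_{\alpha\beta_i}$ over the common denominator $1-t_i$, obtaining a numerator of the form $t_i\, y\overline{Z}_{\alpha\beta_i}(t_{i'},t_{i''},t_{i'''}) - y\overline{Z}_{\alpha\beta_i}(\text{transformed})$, where ``transformed'' denotes the monomial change of variables induced by the chart transition $U_\alpha\to U_{\beta_i}$. Since that substitution acts trivially on $t_{i'},t_{i''},t_{i'''}$ when $t_i=1$, the numerator vanishes at $t_i=1$; as the numerator is a Laurent polynomial, the only potential pole cancels. Together with $\mathsf{e}_{\alpha\beta_i}$ being Laurent (Lemma \ref{Ksqrt}), this shows $\widetilde{\mathsf{e}}_{\alpha\beta_i}$ is a Laurent polynomial.

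For the vertex term, I would first note that in $\widetilde{\mathsf{v}}_\alpha = \mathsf{v}_\alpha - y\overline{Z}_\alpha + \sum_{i}\frac{y\overline{Z}_{\alpha\beta_i}(t_{i'},t_{i''},t_{i'''})}{1-t_i^{-1}}$ the term $\mathsf{v}_\alpha$ is Laurent and the $\overline{Z}_{\alpha\beta_j}$ are polynomial, so the only possible poles of $\widetilde{\mathsf{v}}_\alpha$ lie along $\{t_i=1\}$, and it is enough to kill them one hyperplane at a time. Fixing $i$, the summands with $j\neq i$ are regular at $t_i=1$ (their denominator is $1-t_j^{-1}$), so only $-y\overline{Z}_\alpha + \frac{y\overline{Z}_{\alpha\beta_i}}{1-t_i^{-1}}$ needs analysis. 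Using the same local expansion $Z_\alpha = \frac{Z_{\alpha\beta_i}(t_{i'},t_{i''},t_{i'''})}{1-t_i} + R_i$ with $R_i$ regular at $t_i=1$ that underlies the proof that $\mathsf{v}_\alpha$ has no pole there, and applying the involution $\overline{(\cdot)}$ (which exchanges $\{t_i=1\}$ with itself), I get $\overline{Z}_\alpha = \frac{\overline{Z}_{\alpha\beta_i}}{1-t_i^{-1}} + \overline{R}_i$ with $\overline{R}_i$ regular at $t_i=1$; hence $-y\overline{Z}_\alpha + \frac{y\overline{Z}_{\alpha\beta_i}}{1-t_i^{-1}} = -y\,\overline{R}_i$ is regular at $t_i=1$. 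Thus $\widetilde{\mathsf{v}}_\alpha$ has no pole along any $\{t_i=1\}$, and since its denominators are products of factors $(1-t_i^{\pm1})$ and monomials, it is a Laurent polynomial.

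I expect the only real point of care to be the bookkeeping of this last pole cancellation: pinning down the rational expansion of $Z_\alpha$ transverse to $\{t_i=1\}$ and matching the $y$-term against it exactly, in particular handling the asymmetric fourth-leg contributions that appear in $\widetilde{\mathsf{v}}_\alpha$ when $Z_{\alpha\beta_4}\neq0$. However, this runs entirely parallel to the argument for $\mathsf{v}_\alpha$ given immediately before the statement, so it should be routine.
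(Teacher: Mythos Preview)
Your proposal is correct and follows essentially the same approach as the paper: reduce to Lemma \ref{Ksqrt} for the $y$-independent part, then verify that the added $y$-terms have all their poles at $t_i=1$ cancelling. The paper's own proof is a single sentence (``it is easy to see that all poles in $t_i=1$ cancel''), so your write-up simply fills in the details the authors omitted; your closing worry about the asymmetric fourth-leg terms is unnecessary, since in the defining formula \eqref{defvef2} the $y$-part is fully symmetric and the asymmetry is entirely absorbed into $\mathsf{v}_\alpha$, which Lemma \ref{Ksqrt} already handles.
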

\begin{proof}
By Lemma \ref{Ksqrt}, $\mathsf{v}_\alpha$, $\mathsf{e}_{\alpha\beta}$ are Laurent polynomials in $t_1,t_2,t_3,t_4$. Therefore, it suffices to only consider terms involving $y$, for which it is easy to see that all poles in $t_i=1$ ($i=1,2,3,4$) cancel. 
\end{proof}

\begin{remark}
When we want to stress the dependence on $Z_\alpha$, $Z_{\alpha\beta}$ and distinguish between the DT/PT case, we write $\widetilde{\mathsf{v}}_{Z_\alpha}^{\DT}$, $\widetilde{\mathsf{v}}_{Z_\alpha}^{\PT}$, $\widetilde{\mathsf{e}}_{Z_{\alpha\beta}}^{\DT}$, $\widetilde{\mathsf{e}}_{Z_{\alpha\beta}}^{\PT}$ for the classes introduced in \eqref{defvef2}.
\end{remark}

We introduce some further notations. On each chart $U_\alpha \cong \C^4$ with coordinates $(x_1^{(\alpha)},x_2^{(\alpha)},x_3^{(\alpha)},x_4^{(\alpha)})$ the $(\C^*)^4$-action is given by 
\begin{equation*}
t \cdot x_i^{(\alpha)} = \chi_i^{(\alpha)}(t) \, x_i^{(\alpha)},  \quad \forall \, \, t=(t_1,t_2,t_3,t_4) \in (\C^*)^4
\end{equation*}
for some characters $\chi_i^{(\alpha)} : (\C^*)^4 \rightarrow \C^*$ with $i=1,2,3,4$. 
We recall that both fixed loci $I^T$, $P^T$ consists of finitely many fixed points, giving rise to local data 
$$Z = \big(\{Z_\alpha\}_{\alpha \in V(X)}, \{Z_{\alpha\beta}\}_{\alpha\beta \in E(X)}\big).$$ 
In DT case, $Z_{\alpha}$ are point- or curve-like solid partitions and $Z_{\alpha\beta}$ are finite plane partitions. In the PT case, $Z_{\alpha}$ are box configurations (see \eqref{PTZalpha}) and $Z_{\alpha\beta}$ are finite plane partitions. Recall that  in the stable pairs case we impose 
Assumption \ref{assumption} from the introduction. Finally, we introduce the following notation. For any $t_1^{w_1}t_2^{w_2}t_3^{w_3}t_4^{w_4} y^{a} \in K_0^{T\times \C^*}(\mathrm{pt})$, we set
\begin{equation} \label{bracket}
[t_1^{w_1}t_2^{w_2}t_3^{w_3}t_4^{w_4} y^{a}] := t_1^{\frac{w_1}{2}}t_2^{\frac{w_2}{2}}t_3^{\frac{w_3}{2}}t_4^{\frac{w_4}{2}} y^{\frac{a}{2}} - t_1^{-\frac{w_1}{2}}t_2^{-\frac{w_2}{2}}t_3^{-\frac{w_3}{2}}t_4^{-\frac{w_4}{2}} y^{-\frac{a}{2}}
\end{equation}
and we extend this definition to $K_0^{T \times \C^*}(\mathrm{pt})$ by setting
$$
\Big[\sum_a \tau^a - \sum_b \tau^b\Big] := \frac{\prod_a [\tau^a]}{\prod_b [\tau^b]}, 
$$
where we use multi-index notation $\tau := (t_1,t_2,t_3,t_4,y)$ and this expression is only defined when no index $b$ equals $0$.

\begin{theorem} \label{vertexthm}
Let $X$ be a toric Calabi-Yau 4-fold, $\beta \in H_2(X,\Z)$ and $n \in \Z$. Let $L$ be a $T$-equivariant line bundle on $X$ and denote the character of $L|_{U_\alpha}$ by
$\gamma^{(\alpha)}(t) \in K_0^T(U_\alpha)$.
Then
\begin{align*}
I_{n,\beta}(L,y) = \sum_{Z\in I_{n}(X,\beta)^T} (-1)^{o(\mathcal{L})|_Z} \bigg(\prod_{\alpha \in V(X)} [-\widetilde{\mathsf{v}}^{\DT}_{Z_\alpha}]  \bigg) \bigg(\prod_{\alpha\beta \in E(X)} [-\widetilde{\mathsf{e}}^{\DT}_{Z_{\alpha\beta}}] \bigg), \\
P_{n,\beta}(L,y) = \sum_{Z\in P_{n}(X,\beta)^T} (-1)^{o(\mathcal{L})|_Z} \bigg( \prod_{\alpha \in V(X)} [-\widetilde{\mathsf{v}}^{\PT}_{Z_\alpha}] \bigg)  \bigg( \prod_{\alpha\beta \in E(X)} [-\widetilde{\mathsf{e}}^{\PT}_{Z_{\alpha\beta}}] \bigg),
\end{align*}
where the sums are over all $T$-fixed points $Z =\big(\{Z_\alpha\}_{\alpha \in V(X)}, \{Z_{\alpha\beta}\}_{\alpha\beta \in E(X)}\big)$ and  $\widetilde{\mathsf{v}}_{Z_\alpha}, \widetilde{\mathsf{e}}_{Z_{\alpha\beta}}$ are evaluated at $$(t_1,t_2,t_3,t_4,y) = (\chi_1^{(\alpha)}(t),\chi_2^{(\alpha)}(t),\chi_3^{(\alpha)}(t),\chi_4^{(\alpha)}(t),\overline{\gamma}^{(\alpha)}(t) \cdot y).$$
\end{theorem}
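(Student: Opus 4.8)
The plan is to match the two sides summand by summand over the finite fixed loci $I_n(X,\beta)^T$ (resp.\ $P_n(X,\beta)^T$, where Assumption~\ref{assumption} and Lemma~\ref{PTfixedlocus} apply). Fix $Z$ and work with the explicit square root underlying the vertex formalism, $\sqrt{T_I^{\vir}|_Z}:=\sum_\alpha\mathsf{v}_{Z_\alpha}+\sum_{\alpha\beta}\mathsf{e}_{Z_{\alpha\beta}}$ of \eqref{defvef1}; this is a square root of $T_I^{\vir}|_Z$ once $t_1t_2t_3t_4=1$, by Lemma~\ref{Ksqrt} and the \v{C}ech formula $\tr_{T_I^{\vir}|_Z}=-\tr_{\dR\Hom(E,E)_0}=\sum_\alpha\mathsf{V}_{Z_\alpha}+\sum_{\alpha\beta}\mathsf{E}_{Z_{\alpha\beta}}$ recalled in Section~\ref{vertex}. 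By Lemma~\ref{Ksqrt}, none of $\mathsf{v}_{Z_\alpha},\mathsf{e}_{Z_{\alpha\beta}}$ contains $t^0$ with positive coefficient. If $(T_I^{\vir}|_Z)^{f}\ne 0$, then $e\big(\sqrt{\mathrm{Ob}_I|_Z}^{f}\big)=0$ and the $Z$-summand of Definition~\ref{Nekgen} vanishes; moreover some $\mathsf{v}_{Z_\alpha}$ or $\mathsf{e}_{Z_{\alpha\beta}}$ then contains $t^0$ with negative coefficient, so — since the $y$-corrections in \eqref{defvef2} introduce no $t^0$ — the class $-\widetilde{\mathsf{v}}_{Z_\alpha}$ (resp.\ $-\widetilde{\mathsf{e}}_{Z_{\alpha\beta}}$) contains $t^0$ with positive coefficient and $[-\widetilde{\mathsf{v}}_{Z_\alpha}]$ (resp.\ $[-\widetilde{\mathsf{e}}_{Z_{\alpha\beta}}]$) acquires the factor $[t^0]=0$, so the matching summand on the right vanishes as well. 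Hence we may assume $(T_I^{\vir}|_Z)^{f}=0$, so that $\sqrt{N^{\vir}|_Z}=\sqrt{T_I^{\vir}|_Z}$ and $e\big(\sqrt{\mathrm{Ob}_I|_Z}^{f}\big)=\td\big(\sqrt{T_I^{\vir}|_Z}^{f}\big)=1$, and the $Z$-summand of Definition~\ref{Nekgen} reduces to
\[
(-1)^{o(\L)|_Z}\,\frac{\ch\big(\sqrt{K_I^{\vir}|_Z}^{\frac{1}{2}}\big)}{\ch\big(\Lambda^{\mdot}\sqrt{T_I^{\vir}|_Z}^{\vee}\big)}\cdot\frac{\ch\big(\Lambda^{\mdot}(L^{[n]}|_Z\otimes y^{-1})\big)}{\ch\big((\det(L^{[n]}|_Z\otimes y^{-1}))^{\frac{1}{2}}\big)}.
\]

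\textbf{Symmetrization.} Next I would record the elementary multiplicative identities: for $\mathcal{N},\mathcal{M}\in K_0^{T\times\C^*}(\mathrm{pt})$ with $\mathcal{N}$ having no weight-$0$ component,
\[
\ch\!\Big(\frac{(\det\mathcal{N}^{\vee})^{\frac{1}{2}}}{\Lambda^{\mdot}\mathcal{N}^{\vee}}\Big)=\frac{1}{[\mathcal{N}]},\qquad
\ch\!\Big(\frac{\Lambda^{\mdot}\mathcal{M}}{(\det\mathcal{M})^{\frac{1}{2}}}\Big)=(-1)^{\rk\mathcal{M}}[\mathcal{M}],
\]
both of which reduce by multiplicativity to a single character $\tau^w$, where they are immediate. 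Since (Definition~\ref{Nekgen}) the square root of $K_I^{\vir}$ induced by $\sqrt{T_I^{\vir}|_Z}$ is $\sqrt{K_I^{\vir}|_Z}=\det\big(\sqrt{T_I^{\vir}|_Z}^{\vee}\big)$, the first identity rewrites the first two factors above as $1/[\sqrt{T_I^{\vir}|_Z}]$. For the last factor, $L^{[n]}|_Z\otimes y^{-1}$ has $y$-weight $\mp1$ on every monomial, hence no weight-$0$ component, and $(-1)^{\rk(L^{[n]}|_Z)}[L^{[n]}|_Z\otimes y^{-1}]=[\,\overline{L^{[n]}|_Z\otimes y^{-1}}\,]$ since conjugation flips each bracket; here $\rk L^{[n]}|_Z=\chi(X,L\otimes E')$ is constant over the fixed locus by Riemann--Roch. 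So the $Z$-summand of Definition~\ref{Nekgen} equals $(-1)^{o(\L)|_Z}\,[\sqrt{T_I^{\vir}|_Z}]^{-1}\,[\,\overline{L^{[n]}|_Z\otimes y^{-1}}\,]$.

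\textbf{\v{C}ech recombination.} It remains to identify $\prod_\alpha[-\widetilde{\mathsf{v}}_{Z_\alpha}]\prod_{\alpha\beta}[-\widetilde{\mathsf{e}}_{Z_{\alpha\beta}}]$ (evaluated at $t_i\mapsto\chi_i^{(\alpha)}(t)$, $y\mapsto\overline{\gamma}^{(\alpha)}(t)\,y$) with this. Writing $\widetilde{\mathsf{v}}_{Z_\alpha}=\mathsf{v}_{Z_\alpha}+(\widetilde{\mathsf{v}}_{Z_\alpha}-\mathsf{v}_{Z_\alpha})$ and likewise for the edges, multiplicativity of $[\,\cdot\,]$, $[-A]=[A]^{-1}$, and $\prod_\alpha[-\mathsf{v}_{Z_\alpha}]\prod_{\alpha\beta}[-\mathsf{e}_{Z_{\alpha\beta}}]=[\sqrt{T_I^{\vir}|_Z}]^{-1}$ give
\[
\prod_\alpha[-\widetilde{\mathsf{v}}_{Z_\alpha}]\prod_{\alpha\beta}[-\widetilde{\mathsf{e}}_{Z_{\alpha\beta}}]=[\sqrt{T_I^{\vir}|_Z}]^{-1}\cdot\Big[-\!\sum_\alpha(\widetilde{\mathsf{v}}_{Z_\alpha}-\mathsf{v}_{Z_\alpha})-\!\sum_{\alpha\beta}(\widetilde{\mathsf{e}}_{Z_{\alpha\beta}}-\mathsf{e}_{Z_{\alpha\beta}})\Big],
\]
so by the second paragraph the theorem follows once one proves the \v{C}ech-type identity
\[
\sum_\alpha(\widetilde{\mathsf{v}}_{Z_\alpha}-\mathsf{v}_{Z_\alpha})+\sum_{\alpha\beta}(\widetilde{\mathsf{e}}_{Z_{\alpha\beta}}-\mathsf{e}_{Z_{\alpha\beta}})=-\,\overline{L^{[n]}|_Z\otimes y^{-1}}
\]
after the above substitutions, since then $[-(-\overline{L^{[n]}|_Z\otimes y^{-1}})]=[\,\overline{L^{[n]}|_Z\otimes y^{-1}}\,]$. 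To see this, use $L^{[n]}|_Z=R\Gamma(X,L\otimes E')$ with $E'=\O_Z$ or $F$ one-dimensional, so that the local-to-global spectral sequence for the cover $\{U_\alpha\}$ gives $\tr_{L^{[n]}|_Z}=\sum_\alpha\gamma^{(\alpha)}Z_\alpha-\sum_{\alpha\beta}\gamma^{(\alpha\beta)}Z_{\alpha\beta}$ (using \eqref{PTZalpha} in the PT case); after the substitution the vertex term $-y\overline{Z}_\alpha$ of \eqref{defvef2} becomes $-\overline{\gamma^{(\alpha)}Z_\alpha\,y^{-1}}$, minus the conjugate of the $U_\alpha$-contribution, while the terms $\sum_i y\overline{Z}_{\alpha\beta_i}/(1-t_i^{-1})$ and the $y$-parts of $\widetilde{\mathsf{e}}_{\alpha\beta}$ redistribute and glue the overlap contributions precisely as the $\mathsf{F}/\mathsf{V}/\mathsf{E}$ bookkeeping of \eqref{defVEF} does for $-\dR\Hom(E,E)_0$ — each edge counted once via $\tfrac{1}{1-t}+\tfrac{1}{1-t^{-1}}=1$, the poles at $t_i=1$ cancelling, the transition functions $U_\alpha\to U_\beta$ acting compatibly since $\overline{\gamma}^{(\alpha)}=\overline{\gamma}^{(\beta)}$ on $U_{\alpha\beta}$. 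Summing over $Z$ yields both formulas; the PT case is identical, using Lemmas~\ref{fewlegs} and~\ref{PTfixedlocus} for the combinatorics of $P^{T}$.

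\textbf{Main obstacle.} The one substantive step is the last \v{C}ech-type identity: proving that the a priori rational per-vertex and per-edge $y$-contributions of \eqref{defvef2} telescope, chart by chart, into the single global class $-\overline{R\Gamma(X,L\otimes E')\otimes y^{-1}}$, with all poles at $t_i=1$ cancelling and the chart transitions acting compatibly once the per-chart twist $y\mapsto\overline{\gamma}^{(\alpha)}y$ of the insertion is incorporated. This parallels the argument for $-\dR\Hom(E,E)_0$ in \cite{MNOP} and \cite[Sect.~2.4, Prop.~2.11]{CK2}, but is strictly easier, since $R\Gamma(X,L\otimes E')$ is linear rather than quadratic in the character $Z$; carrying the line bundle $L$ and the variable $y$ through that bookkeeping is the only genuinely new point.
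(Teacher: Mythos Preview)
Your proof is correct and follows essentially the same route as the paper's: both use the explicit square root $\sum_\alpha\mathsf{v}_{Z_\alpha}+\sum_{\alpha\beta}\mathsf{e}_{Z_{\alpha\beta}}$ of Lemma~\ref{Ksqrt}, the identity \eqref{essential} (your ``symmetrization'' identities), the \v{C}ech computation of $R\Gamma(X,L\otimes E')$, and then the redistribution of the $y$-terms into $\widetilde{\mathsf{v}},\widetilde{\mathsf{e}}$ via \eqref{defvef2}. The only organizational differences are that you treat the degenerate case $(T_I^{\vir}|_Z)^f\ne 0$ explicitly (the paper relegates this to the Remark after Definition~\ref{Nekgen}) and you carry general $L$ throughout rather than first doing $L=\O_X$ and then twisting $y\mapsto\overline{\gamma}^{(\alpha)}(t)\,y$ at the end.
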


\begin{proof}
We discuss the DT case; the PT case is similar.  
We suppose $L = \O_X$ and discuss the general case afterwards. For any $Z \in I^T$, 
we have  
\begin{align*}
I_{n,\beta}(\O_X, y)=\sum_{Z \in I^{T}} (-1)^{o(\mathcal{L})|_Z} \frac{\ch\left(\sqrt{K_I^{\vir}|_{Z}}^{\frac{1}{2}}\right)}{\ch\left(\Lambda^{\mdot} \sqrt{T^{\vir}|_{Z}}^{\vee}\right)}   \frac{\ch(\Lambda^{\mdot} (\O_X^{[n]}|_{Z} \otimes y^{-1}))}{\ch((\det(\O_X^{[n]} |_{Z} \otimes y^{-1}))^{\frac{1}{2}})}.
\end{align*}
By Lemma \ref{Ksqrt}, 
\begin{equation} \label{eqn1}
\sqrt{T_I^{\vir}|_{Z}} = \sum_{\alpha \in V(X)} \mathsf{v}_{Z_\alpha} + \sum_{\alpha\beta \in E(X)} \mathsf{e}_{Z_{\alpha\beta}}. 
\end{equation}
At any fixed point $Z\in I^T$, we have
\begin{align*}
\frac{\Lambda^{\mdot} (\O_X^{[n]} \otimes y^{-1})} {(\det(     \O_X^{[n]} \otimes y^{-1} ))^{\frac{1}{2}}  }  \Bigg|_{Z} = \frac{\Lambda^{\mdot} R\Gamma(X, \O_Z \otimes y^{-1})} {(\det(     R\Gamma(X, \O_{Z} \otimes y^{-1}))^{\frac{1}{2}})  }, 
\end{align*}
where $\O_X^{[n]}$ was defined in \eqref{taut}. Calculation by \v{C}ech cohomology gives 
\begin{equation} \label{eqn2}
\tr_{R\Gamma(X, \O_Z \otimes y^{-1})} = \sum_{\alpha \in V(X)} \tr_{\Gamma(U_\alpha, \O_{Z_\alpha})} \otimes y^{-1} - \sum_{\alpha\beta \in E(X)} \tr_{\Gamma(U_{\alpha\beta}, \O_{Z_{\alpha\beta}})} \otimes y^{-1}.
\end{equation}
As $T$-representations, $\tr_{\Gamma(U_\alpha, \O_{Z_\alpha})} = Z_\alpha$, where $Z_{\alpha}$ was defined in \eqref{Zalpha}. Suppose $\PP^1 \cong L_{\alpha\beta} = \{x_2^{(\alpha)}=x_3^{(\alpha)}=x_4^{(\alpha)}=0\}$, i.e.~leg $Z_{\alpha\beta}$ is along the $x_1^{(\alpha)}$-axis, then
$$
\tr_{\Gamma(U_{\alpha\beta}, \O_{Z_{\alpha\beta}})} = \delta(\chi^{(\alpha)}_1(t)) \, Z_{\alpha\beta},
$$
where $\chi^{(\alpha)}_1(t)$ denotes the character corresponding to the $(\C^*)^4$-action on the first coordinate of $U_{\alpha\beta} \cong \C^* \times \C^3$ and $\delta(t)$ was defined in \eqref{defdelta}.
Next, we use the following essential identity
\begin{equation*}
\ch(\Lambda^\mdot \mathcal{L}^*) = \frac{e(\mathcal{L})}{\td(\mathcal{L})} = 1 - e^{-c_1(\mathcal{L})},
\end{equation*}
for any $T$-equivariant line bundle $\mathcal{L}$. Hence
\begin{equation} \label{essential}
\frac{\ch(\Lambda^\mdot \mathcal{L}^*)}{ \ch((\det \mathcal{L}^*)^{\frac{1}{2}})} = \frac{1 - e^{-c_1(\mathcal{L})}}{ e^{-\frac{1}{2} c_1(\mathcal{L}) }} = e^{\frac{c_1(\mathcal{L})}{2}} - e^{-\frac{c_1(\mathcal{L})}{2}}.
\end{equation}
Furthermore, we recall the following relations in the $K$-group
\begin{align}
\begin{split} \label{eqnK}
\Lambda^\mdot (E \oplus F) &= \Lambda^\mdot E \otimes \Lambda^\mdot F, \\
\det(E \oplus F) &= \det E \otimes \det F.
\end{split}
\end{align}

From \eqref{eqn1}, \eqref{eqn2}, \eqref{eqnK}, and the fact that $\ch(\cdot)$ is a ring homomorphism, we at once deduce
\begin{align}
\begin{split} \label{eqn3}
I_{n,\beta}(\O_X, y) = \sum_{Z \in I^{T}} (-1)^{o(\mathcal{L})|_Z} &\prod_{\alpha \in V(X)} \frac{\ch((\det \overline{\mathsf{v}}_{Z_\alpha})^{\frac{1}{2}} )}{\ch(\Lambda^{\mdot} \overline{\mathsf{v}}_{Z_\alpha} )} \frac{\ch(\Lambda^{\mdot} ( \O_{Z_\alpha} \otimes y^{-1}))}{\ch((\det(\O_{Z_\alpha} \otimes y^{-1}))^{\frac{1}{2}})}  \\
&\prod_{\alpha\beta \in E(X)} \frac{\ch((\det \overline{\mathsf{e}}_{Z_{\alpha\beta}})^{\frac{1}{2}} )}{\ch(\Lambda^{\mdot} \overline{\mathsf{e}}_{Z_{\alpha\beta}} )} \frac{\ch(\Lambda^{\mdot} ( \O_{Z_{\alpha\beta}} \otimes y^{-1}))}{\ch((\det(\O_{Z_{\alpha\beta}} \otimes y^{-1}))^{\frac{1}{2}})},
\end{split}
\end{align}
where $\mathsf{v}_{Z_\alpha}, \mathsf{e}_{Z_{\alpha\beta}}$ were defined in \eqref{defvef1}. Using multi-index notation for $t = (t_1,t_2,t_3,t_4)$, where $t_1t_2t_3t_4=1$, we write
\begin{align*}
Z_{\alpha} &= \sum_{u_{Z_\alpha}} t^{u_{Z_\alpha}}, \quad \mathsf{v}_{Z_\alpha} = \sum_{v_{Z_\alpha}} t^{v_{Z_\alpha}} - \sum_{w_{Z_\alpha}} t^{w_{Z_\alpha}}, \\
Z_{\alpha\beta} &= \sum_{u_{Z_{\alpha\beta}}} t^{u_{Z_{\alpha\beta}}}, \quad \mathsf{e}_{Z_{\alpha\beta}} = \sum_{v_{Z_{\alpha\beta}}} t^{v_{Z_{\alpha\beta}}} - \sum_{w_{Z_{\alpha\beta}}} t^{w_{Z_{\alpha\beta}}}.
\end{align*}
Note that $ \mathsf{v}_{Z_\alpha}$, $\mathsf{e}_{Z_{\alpha\beta}}$, $Z_{\alpha\beta}$ are all Laurent polynomials (Lemma \ref{Ksqrt}). However, $Z_\alpha$ is in general a Laurent series (we will redistribute its term shortly). Combining \eqref{essential} and \eqref{eqn3}, we find
\begin{align*}
I_{n,\beta}(\O_X, y) = &\sum_{Z \in I^{T}} (-1)^{o(\mathcal{L})|_Z} \prod_{\alpha \in V(X)} \prod_{u_{Z_\alpha}, v_{Z_\alpha}, w_{Z_\alpha}} \Bigg( \frac{t^{\frac{w_{Z_\alpha}}{2}}  - t^{-\frac{w_{Z_\alpha}}{2}}}{t^{\frac{v_{Z_\alpha}}{2}}  - t^{-\frac{v_{Z_\alpha}}{2}}} \Bigg) (t^{-\frac{u_{Z_\alpha}}{2}} y^{\frac{1}{2}}  - t^{\frac{u_{Z_\alpha}}{2}} y^{-\frac{1}{2}})  \\
&\prod_{\alpha\beta \in E(X)} \prod_{u_{Z_{\alpha\beta}}, v_{Z_{\alpha\beta}}, w_{Z_{\alpha\beta}}}  \Bigg( \frac{t^{\frac{w_{Z_{\alpha}\beta}}{2}}  - t^{-\frac{w_{Z_{\alpha\beta}}}{2}}}{t^{\frac{v_{Z_{\alpha\beta}}}{2}}  - t^{-\frac{v_{Z_{\alpha\beta}}}{2}}} \Bigg) (t^{-\frac{u_{Z_{\alpha\beta}}}{2}} y^{\frac{1}{2}} - t^{\frac{u_{Z_{\alpha\beta}}}{2}} y^{-\frac{1}{2}}) \\
= &\sum_{Z \in I^{T}}(-1)^{o(\mathcal{L})|_Z} \prod_{\alpha \in V(X)} [-\mathsf{v}_\alpha] \cdot \prod_{u_{Z_\alpha}}   (t^{-\frac{u_{Z_\alpha}}{2}} y^{\frac{1}{2}}  - t^{\frac{u_{Z_\alpha}}{2}} y^{-\frac{1}{2}})  \\
&\prod_{\alpha\beta \in E(X)}  [-\mathsf{e}_{\alpha\beta}] \cdot \prod_{u_{Z_{\alpha\beta}}} (t^{-\frac{u_{Z_{\alpha\beta}}}{2}} y^{\frac{1}{2}} - t^{\frac{u_{Z_{\alpha\beta}}}{2}} y^{-\frac{1}{2}}),
\end{align*}
where, for each $\alpha \in V(X)$ and $\alpha\beta \in E(X)$, the corresponding terms in the product are evaluated in
$$
t = (t_1,t_2,t_3,t_4) = (\chi_1^{(\alpha)}(t),\chi_2^{(\alpha)}(t),\chi_3^{(\alpha)}(t),\chi_4^{(\alpha)}(t)).
$$
The terms in the products over $u_{Z_\alpha}, u_{Z_{\alpha\beta}}$ can be ``absorbed'' into $\mathsf{v}_\alpha$, $\mathsf{e}_{\alpha\beta}$. This can be achieved by distributing 
$$
-\sum_{\alpha \in V(X)}  \overline{Z}_\alpha \otimes y +  \sum_{\alpha\beta \in E(X)}  \delta(\chi^{(\alpha)}_1(t)) \, \overline{Z}_{\alpha\beta} \otimes y 
$$
over $\mathsf{v}_\alpha$, $\mathsf{e}_{\alpha\beta}$ as in \eqref{defvef2}. We conclude
\begin{equation*}
I_{n,\beta}(\O_X, y) =\sum_{Z \in I^{T}} (-1)^{o(\mathcal{L})|_Z} \bigg( \prod_{\alpha \in V(X)} [-\widetilde{\mathsf{v}}_\alpha] \bigg)  \bigg( \prod_{\alpha\beta \in E(X)}  [-\widetilde{\mathsf{e}}_{\alpha\beta}]\bigg). 
\end{equation*}
This finishes the case $L  = \O_X$. Replacing $y$ by
$$
\overline{\gamma}^{(\alpha)}(t) \cdot y, \quad \overline{\gamma}^{(\alpha\beta)}(t) \cdot y
$$
establishes the general case. 
\end{proof}

\begin{remark}
In the case $I_n(\C^4,0)$, which is discussed in \cite{NP}, our definition of $\widetilde{\mathsf{v}}_{Z_\alpha}$ \eqref{defvef2} differs slightly from loc.~cit., who take $(1-y^{-1})Z_\alpha - P_{123} Z_\alpha \overline{Z}_\alpha$. The difference of the second term was discussed in Remark \ref{differentsqrt}. The difference of the $y$-term is explained as follows. For $I_n(\C^4,0)$, Nekrasov-Piazzalunga consider the invariant defined in Definition \ref{Nekgen} but with $L^{[n]} \otimes y^{-1}$ replaced by $(L^{[n]})^{\vee} \otimes y$ (and $L = \O_X$). Note the following two identities
\begin{align*}
\Lambda^\mdot E^\vee &= (-1)^{\rk(E)} \Lambda^\mdot E \otimes \det(E)^*, \\
\det(E^{\vee}) &= \det(E)^*,
\end{align*}
for any $E \in K_0^{T \times \C^*}(\mathrm{pt})$. This shows at once that our definition differs from loc.~cit.~by an overall factor $(-1)^n$. In the vertex formalism, it results in replacing $y \overline{Z}_\alpha$ by $y^{-1} Z_\alpha$ in \eqref{defvef2}.
\end{remark}

We end with an observation about the powers of the equivariant parameters. The expressions $[-\widetilde{\mathsf{v}}_{\alpha}]$, $[-\widetilde{\mathsf{e}}_{\alpha\beta}]$ a priori involve \emph{half-integer} powers of $t_1,t_2,t_3,t_4$ (formal square roots). In fact, taking a single leg of multiplicity one with weights $m_{\alpha\beta} = 0$, $m'_{\alpha\beta} = -1$, $m''_{\alpha\beta} = -1$ already shows that non-integer powers indeed occur in the edge. Nonetheless, for the vertex we have the following:
\begin{proposition} \label{integerpowers}
We have 
$$
[-\widetilde{\mathsf{v}}_{\alpha}] \in \Q(t_1,t_2,t_3,t_4,y^{\frac{1}{2}}) / (t_1t_2t_3t_4 - 1).
$$
\end{proposition}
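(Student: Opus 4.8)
The plan is to reduce the statement to a parity property of the total $T$-weight of $\widetilde{\mathsf{v}}_\alpha$ and then track that weight through \eqref{defvef2}. Write $\tau=(t_1,t_2,t_3,t_4,y)$. For a virtual $T\times\C^*$-representation $V=\sum_j\tau^{a_j}-\sum_i\tau^{b_i}$ with no zero summand, factoring $[\tau^w]=\tau^{w/2}(1-\tau^{-w})$ yields
\begin{equation*}
[V]=\tau^{\frac12\operatorname{wt}(V)}\cdot\frac{\prod_j(1-\tau^{-a_j})}{\prod_i(1-\tau^{-b_i})},\qquad\operatorname{wt}(V):=\sum_j a_j-\sum_i b_i\in\Z^5,
\end{equation*}
and the second factor already lies in $\Q(t_1,t_2,t_3,t_4,y)/(t_1t_2t_3t_4-1)$. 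Imposing the Calabi--Yau relation $t_1t_2t_3t_4=1$ (so that $\operatorname{wt}$ is read in $\Z^5/\Z(1,1,1,1,0)$), it therefore suffices to show that the $(t_1,\dots,t_4)$-part of $\operatorname{wt}(\widetilde{\mathsf{v}}_\alpha)$ has all four entries of the same parity. If $\widetilde{\mathsf{v}}_\alpha$ does have a zero summand, then since its $y$-free part is $\mathsf{v}_\alpha$, which by Lemma \ref{Ksqrt} carries no $T$-fixed class with positive coefficient, the zero summand occurs with negative coefficient and $[-\widetilde{\mathsf{v}}_\alpha]=0\in\Q(t_1,t_2,t_3,t_4,y^{1/2})/(t_1t_2t_3t_4-1)$ trivially.

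Next I would isolate the insertion. Comparing \eqref{defvef1} with the expanded form of \eqref{defvef2}, one has $\widetilde{\mathsf{v}}_\alpha=\mathsf{v}_\alpha-y\,\overline{L_\alpha}$, where $L_\alpha:=Z_\alpha-\sum_{i=1}^4\frac{Z_{\alpha\beta_i}}{1-t_i}$ is precisely the part of $\mathsf{v}_\alpha$ that is linear in $\{Z_\alpha,Z_{\alpha\beta_i}\}$. Since $\frac{Z_{\alpha\beta_i}}{1-t_i}$ is exactly the principal part of the rational function $Z_\alpha$ at $t_i=1$ (the poles of $Z_\alpha$ come only from the infinite legs, and for a curve-like solid partition two legs never overlap in an infinite region because their cross-sections are \emph{finite} plane partitions), the class $L_\alpha$ — and hence $Q_\alpha:=\mathsf{v}_\alpha-L_\alpha$ — is a genuine Laurent polynomial. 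Because $\operatorname{wt}(\overline W)=-\operatorname{wt}(W)$ and tensoring by $y$ leaves the $(t_1,\dots,t_4)$-part of the weight unchanged,
\begin{equation*}
\operatorname{wt}(\widetilde{\mathsf{v}}_\alpha)_t=\operatorname{wt}(\mathsf{v}_\alpha)_t+\operatorname{wt}(L_\alpha)_t=2\operatorname{wt}(L_\alpha)_t+\operatorname{wt}(Q_\alpha)_t\equiv\operatorname{wt}(Q_\alpha)_t\pmod 2,
\end{equation*}
so everything comes down to showing that $\operatorname{wt}(Q_\alpha)_t$ has all four entries of equal parity; I expect in fact $\operatorname{wt}(Q_\alpha)_t\equiv0$. (The stable-pairs vertex is treated identically.)

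Finally one analyses $Q_\alpha$, which consists only of quadratic terms: each is a rank-$0$, weight-$0$ Laurent polynomial ($\overline{P}_{123}$, $\frac{P_{jk}}{t_jt_k}$, or products thereof, possibly multiplied by $\frac{1}{1-t_4}$) times a bilinear expression $W\overline{W'}$ with $W,W'\in\{Z_\alpha,Z_{\alpha\beta_4}\}$. For the diagonal contributions $W=W'$ one has $\operatorname{wt}(W\overline W)=\operatorname{rk}(W)\operatorname{wt}(\overline W)+\operatorname{rk}(\overline W)\operatorname{wt}(W)=0$, so in the point-like case $Q_\alpha=-\overline{P}_{123}Z_\alpha\overline{Z}_\alpha$ has weight $0$ and we are done. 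In the curve-like case the divisions by $1-t_i$ (needed only to cancel the poles of $\overline{P}_{123}Z_\alpha\overline{Z}_\alpha$ at $t_i=1$ against those of the edge terms) and the mixed terms $\overline{Z}_\alpha Z_{\alpha\beta_4}$, $Z_\alpha\overline{Z}_{\alpha\beta_4}$ are handled after clearing denominators: writing the Laurent polynomial $Q_\alpha$ as $N/D$ with $N,D$ honest Laurent polynomials formed from the rational expression of $Z_\alpha$, one recovers $\operatorname{wt}(Q_\alpha)$ from $\operatorname{wt}(N)=\operatorname{rk}(Q_\alpha)\operatorname{wt}(D)+\operatorname{rk}(D)\operatorname{wt}(Q_\alpha)$ together with the (easily computed) parities of $\operatorname{rk}(N),\operatorname{rk}(D),\operatorname{wt}(N),\operatorname{wt}(D)$; equivalently, $\operatorname{wt}(Q_\alpha)_i\equiv\tfrac12\big(\operatorname{rk}(Q_\alpha)-Q_\alpha|_{t_i=-1,\,t_{j\ne i}=1,\,y=1}\big)\bmod2$, evaluated on the pole-free class $Q_\alpha$ as a whole. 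The conceptual point — the $y$-insertion doubles the anomalous weight $\operatorname{wt}(L_\alpha)_t$, which is why $\widetilde{\mathsf{v}}_\alpha$ has integer $t$-powers while $\mathsf{v}_\alpha$ and the edges $\widetilde{\mathsf{e}}_{\alpha\beta}$ need not — is clear; the main obstacle is to make this final weight count rigorous in the curve-like case, where $Z_\alpha$ is only a rational function and has no well-defined weight until the pole-cancelling sum has been formed.
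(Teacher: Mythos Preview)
Your approach is correct and essentially the same as the paper's: both reduce the statement to the parity of the total $t$-weight of $\widetilde{\mathsf v}_\alpha$ (equivalently, to showing $(\det\widetilde{\mathsf v}_\alpha)^{1/2}$ has integer $t$-powers). Your decomposition $\widetilde{\mathsf v}_\alpha = L_\alpha + Q_\alpha - y\,\overline{L}_\alpha$, giving $\operatorname{wt}(\widetilde{\mathsf v}_\alpha)_t \equiv \operatorname{wt}(Q_\alpha)_t \pmod 2$, is a clean way to isolate why the $y$-insertion is exactly what is needed, and your treatment of the point-like case agrees with the paper's computation $(\det\widetilde{\mathsf v}_\alpha)^{1/2}=\prod_u y^{-1/2}t^u$ via $\det(\overline P_{123}Z_\alpha\overline Z_\alpha)=1$.

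For the curve-like case, which you acknowledge is incomplete, the paper is also terse but its manoeuvre is slightly more direct than your parity-count suggestions. It substitutes $Z_\alpha = \sum_i Z_{\alpha\beta_i}/(1-t_i) + W$ into \eqref{defvef2} and cancels all poles \emph{before} imposing $t_1t_2t_3t_4=1$, so that one is computing $(\det)^{1/2}$ of an honest Laurent polynomial in four independent variables. In that setting every quadratic block still carries a rank-$0$ prefactor ($\overline P_{123}$ or $P_{jk}/t_jt_k$) and therefore contributes $1$ to the determinant, while the linear part remains of the form $W'-y\,\overline{W'}$ for a Laurent polynomial $W'$; one then reads off integer $t$-powers exactly as in the point-like case. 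Your parity formula $\operatorname{wt}(Q_\alpha)_i\equiv\tfrac12(\operatorname{rk}(Q_\alpha)-Q_\alpha|_{t_i=-1,\,t_{j\ne i}=1})\bmod 2$ is valid, but evaluating it forces the same pole-cancellation step, so the two routes converge once you actually carry it out.
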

\begin{proof}
We first consider the case that $Z_\alpha$ satisfies $Z_{\alpha\beta_1}=Z_{\alpha\beta_2}=Z_{\alpha\beta_3}=Z_{\alpha\beta_4} = 0$. As before, we will use multi-index notation for $t = (t_1,t_2,t_3,t_4)$. A monomial $ \pm t^v$ in $\widetilde{\mathsf{v}}_{\alpha}$ contributes as follows to $[-\widetilde{\mathsf{v}}_{\alpha}]$:
$$
[\mp t^v] = (t^{\frac{v}{2}} - t^{-\frac{v}{2}})^{\mp 1} = \Big( \frac{1- t^{-v}}{t^{-\frac{v}{2}}}  \Big)^{\mp 1}.
$$
Hence, non-integer powers can only come from $t^{ \mp \frac{v}{2}} = (\det ( \mp t^{v}))^{\frac{1}{2}}$. Therefore, it suffices to calculate $(\det(\cdot))^{\frac{1}{2}}$ of
$$
\widetilde{\mathsf{v}}_{\alpha} = Z_\alpha - y \overline{Z}_\alpha - \overline{P}_{123} Z_\alpha \overline{Z}_\alpha.
$$
Writing $Z_\alpha = \sum_{u} t^u$, we find
$$
(\det \widetilde{\mathsf{v}}_\alpha)^{\frac{1}{2}} =  \prod_u \frac{t^{\frac{u}{2}}}{y^{\frac{1}{2}} t^{-\frac{u}{2}} } = \prod_u y^{-\frac{1}{2}}t^u,
$$
where we used that $\det (\overline{P}_{123} Z_\alpha \overline{Z}_\alpha) = 1$.

For the general case, write $Z_\alpha = \sum_{i=1}^4 \frac{Z_{\alpha\beta_i}}{1-t_i} + W$, where $W$ is a Laurent polynomial. Next, substitute this expression for $Z_\alpha$ into definition \eqref{defvef2} of $\widetilde{\mathsf{v}}_{\alpha}$ and cancel all poles. Up to this point in the calculation, the relation $t_1t_2t_3t_4=1$ is \emph{not} imposed. Then, similar to the calculation above, taking $(\det(\cdot))^{\frac{1}{2}}$ of the resulting Laurent polynomial gives only integer powers of $t_i$. 
\end{proof}

\subsection{$K$-theoretic 4-fold vertex}

Let $\lambda, \mu, \nu, \rho$ be plane partitions of finite size. This determines a $T$-fixed Cohen-Macaulay curve  $C \subseteq \C^4$ with solid partition defined by \eqref{CMsolid}. We denote this solid partition by $\pi_{\CM}(\lambda,\mu,\nu,\rho)$. Consider the following:
\begin{itemize}
\item All $T$-fixed closed subschemes $Z \subseteq \C^4$ with underlying maximal Cohen-Macaulay subcurve $C$. These correspond to solid partitions $\pi$ with asymptotic plane partitions $\lambda,\mu,\nu,\rho$ in directions $1,2,3,4$. We denote the collection of such solid partitions by $\Pi^{\DT}(\lambda,\mu,\nu,\rho)$. Any $\pi \in \Pi^{\DT}(\lambda,\mu,\nu,\rho)$ determines a character $Z_\pi$ defined by the RHS of \eqref{Zalpha} and hence, by \eqref{defvef2}, a Laurent polynomial
$$
\widetilde{\mathsf{v}}_{\pi}^{\DT} \in \Q[t_1^{\pm 1},t_2^{\pm 1},t_3^{\pm 1},t_4^{\pm 1},y] / (t_1t_2t_3t_4-1).
$$ 
\item Assume at most two of $\lambda, \mu, \nu, \rho$ are non-empty. Consider all $T$-fixed stable pairs $(F,s)$ on $X$ with underlying Cohen-Macaulay support curve $C$. These correspond to box configurations as described in Section \ref{fixlocus}. We denote the collection of these box configurations by $\Pi^{\PT}(\lambda,\mu,\nu,\rho)$. Any $B \in \Pi^{\PT}(\lambda,\mu,\nu,\rho)$ determines a character $Z_B$ defined by the RHS of \eqref{PTZalpha}, where the Cohen-Macaulay part is given by \eqref{Zalpha} with solid partition $\pi_{\CM}(\lambda,\mu,\nu,\rho)$. By \eqref{defvef2}, this determines a Laurent polynomial
$$
\widetilde{\mathsf{v}}_{B}^{\PT} \in \Q[t_1^{\pm 1},t_2^{\pm 1},t_3^{\pm 1},t_4^{\pm 1},y] / (t_1t_2t_3t_4-1).
$$ 
\end{itemize}

\begin{definition} \label{vertexdef}
Let $\lambda, \mu, \nu, \rho$ be plane partitions of finite size. Define the DT 4-fold vertex by
\begin{align*}
\mathsf{V}_{\lambda\mu\nu\rho}^{\DT}(t,y,q)_{o(\L)}& := \sum_{\pi \in \Pi^{\DT}(\lambda, \mu, \nu, \rho)} (-1)^{o(\L)|_\pi} [-\widetilde{\mathsf{v}}^{\DT}_\pi] \, q^{|\pi|} \\ & \in \Q(t_1,t_2,t_3,t_4,y^{\frac{1}{2}}) / (t_1t_2t_3t_4-1)(\!(q)\!),
\end{align*}
where $o(\L)|_\pi = 0,1$ denotes a choice of sign for each $\pi$, $[\cdot]$ was defined in \eqref{bracket}, $|\pi|$ denotes renormalized volume (Definition \ref{solid}), and RHS is well-defined by Lemma \ref{Ksqrt}. Note that the powers of $t_i$ are integer by Proposition \ref{integerpowers}. 

Next, suppose at most two of $\lambda, \mu, \nu, \rho$ are non-empty. Define the PT 4-fold vertex by
\begin{align*}
\mathsf{V}_{\lambda\mu\nu\rho}^{\PT}(t,y,q)_{o(\L)} &:= \sum_{B \in \Pi^{\PT}(\lambda, \mu, \nu, \rho)} (-1)^{o(\L)|_B} [-\widetilde{\mathsf{v}}^{\PT}_B] \, q^{|B| + |\pi_{\mathrm{CM}}(\lambda, \mu, \nu, \rho)|} \\ &
\in \Q(t_1,t_2,t_3,t_4,y^{\frac{1}{2}}) / (t_1t_2t_3t_4-1)(\!(q)\!),
\end{align*}
where $o(\L)|_B = 0,1$ denotes a choice of sign for each $B$, $|B|$ denotes the total number of boxes in the box configuration, and $|\pi_{\mathrm{CM}}(\lambda, \mu, \nu, \rho)|$ denotes renormalized volume. We often omit $o(\L)$ from the notation. 
\end{definition}

Similarly, to any finite plane partition $\lambda$, we associate a character $Z_\lambda$ defined by RHS of \eqref{Zalphabeta}. We then define edge terms
\begin{align*}
\mathsf{E}_\lambda^{\DT}(t,y) = \mathsf{E}_\lambda^{\PT}(t,y) := (-1)^{o(\L)|_{\lambda}} [-\widetilde{\mathsf{e}}_{Z_\lambda}] \in 
\Q(t_1^{\frac{1}{2}},t_2^{\frac{1}{2}},t_3^{\frac{1}{2}},t_4^{\frac{1}{2}},y^{\frac{1}{2}}) / (t_1t_2t_3t_4-1),
\end{align*}
where $\widetilde{\mathsf{e}}_{Z_\lambda}$ was defined in \eqref{defvef2}.

The vertex formalism reduces the calculation of $I_{n,\beta}(L,y), P_{n,\beta}(L,y)$ for any toric Calabi-Yau 4-fold $X$, $\beta \in H_2(X,\Z)$, and $n \in \Z$ to a combinatorial expression involving $\mathsf{V}_{\lambda\mu\nu\rho}$ and $\mathsf{E}_{\lambda}$. We illustrate this in a sufficiently general example. Let $X$ be the total space of $\O_{\PP^2}(-1) \oplus \O_{\PP^2}(-2)$. Let $\beta = d\,[\PP^1]$, where $\PP^1$ lies in the zero section $\PP^2 \subseteq X$, and let $L$ be a $T$-equivariant line bundle on $X$. Denote the characters of $L|_{U_\alpha}$, $L|_{U_{\alpha\beta}}$ by
$\gamma^{(\alpha)}(t) \in K_0^T(U_\alpha)$, $\gamma^{(\alpha\beta)}(t) \in K_0^T(U_{\alpha\beta})$ for all $\alpha=1,2,3$ and all $\alpha\beta$.
Then Lemma \ref{chi} and Theorem \ref{vertexthm} imply
\begin{align*}
&\sum_n I_{n,\beta}(L,y) \, q^n =\sum_{\lambda,\mu,\nu \atop |\lambda|+|\mu|+|\nu| = d} q^{f_{1,-1,-2}(\lambda)+f_{1,-1,-2}(\mu)+f_{1,-1,-2}(\nu)} \\
&\cdot \mathsf{E}^{\DT}_{\lambda}|_{(t_1,t_2,t_3,t_4,\overline{\gamma}^{(13)}(t_2,t_3,t_4)y)} \mathsf{V}_{\lambda\mu\varnothing\varnothing}^{\DT}|_{(t_1,t_2,t_3,t_4,\overline{\gamma}^{(1)}(t_1,t_2,t_3,t_4) y)} \mathsf{E}^{\DT}_{\mu} |_{(t_2,t_1,t_3,t_4,\overline{\gamma}^{(12)}(t_1,t_3,t_4) y)}  \\
&\cdot \mathsf{V}^{\DT}_{\mu\nu\varnothing\varnothing} |_{(t_2^{-1},t_1 t_2^{-1},t_3 t_2,t_4t_2^2, \overline{\gamma}^{(2)}(t_2^{-1},t_1 t_2^{-1},t_3 t_2,t_4t_2^2) y) } \mathsf{E}^{\DT}_{\nu}|_{(t_1 t_2^{-1}, t_2^{-1},t_3 t_2,t_4 t_2^2, \overline{\gamma}^{(23)}(t_2^{-1},t_3 t_2,t_4t_2^2) y)} \\
&\cdot \mathsf{V}^{\DT}_{\nu\lambda\varnothing\varnothing}|_{(t_2t_1^{-1},t_1^{-1},t_3 t_1,t_4 t_1^2,\overline{\gamma}^{(3)}(t_2t_1^{-1},t_1^{-1},t_3 t_1,t_4 t_1^2) y)},
\end{align*}
where the sum is over all finite plane partitions $\lambda, \mu, \nu$ satisfying $|\lambda|+|\mu|+|\nu| = d$. Here the choice of signs for the invariants $ I_{n,\beta}(L,y)$ is determined by the choice of signs in each vertex and edge term. Replacing $\DT$ by $\PT$, the same expression holds for the generating function of $P_{n,\beta}(L,y)$.

We conjecture that the DT/PT 4-fold vertex satisfy Conjecture \ref{K-conj intro}. As above, for finite plane partitions $\lambda, \mu, \nu, \rho$, we denote by $\pi_{\CM}(\lambda,\mu,\nu,\rho)$ the curve-like solid partition corresponding to the Cohen-Macaulay curve with ``asymptotics'' $\lambda, \mu, \nu, \rho$. We normalize the DT/PT 4-fold vertex so they start with $q^0$ (whose coefficient is in general not equal to 1). 
This is achieved by multiplying by $q^{-|\pi_{\mathrm{CM}}(\lambda,\mu,\nu,\rho)|}$.

Using the vertex formalism, we verified the following cases:
\begin{proposition}\label{verif} 
There are choices of signs such that 
$$
q^{-|\pi_{\CM}(\lambda,\mu,\nu,\rho)|} \, \mathsf{V}_{\lambda\mu\nu\rho}^{\DT}(t,y,q) = q^{-|\pi_{\CM}(\lambda,\mu,\nu,\rho)|} \, \mathsf{V}_{\lambda\mu\nu\rho}^{\PT}(t,y,q) \, \mathsf{V}_{\varnothing\varnothing\varnothing\varnothing}^{\DT}(t,y,q) \mod q^N
$$
in the following cases:
\begin{itemize}
\item for any $|\lambda| + |\mu| + |\nu| + |\rho| \leqslant 1$ and $N=4$,
\item for any $|\lambda| + |\mu| + |\nu| + |\rho| \leqslant 2$ and $N=4$,
\item for any $|\lambda| + |\mu| + |\nu| + |\rho| \leqslant 3$ and $N=3$,
\item for any $|\lambda| + |\mu| + |\nu| + |\rho| \leqslant 4$ and $N=3$.
\end{itemize}
In each of these cases, the uniqueness statement of Conjecture \ref{K-conj intro} holds.
\end{proposition}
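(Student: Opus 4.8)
The plan is to verify Proposition \ref{verif} by a direct finite computation organized by the vertex formalism of Definition \ref{vertexdef}. First, for each choice of asymptotic plane partitions $\lambda,\mu,\nu,\rho$ with $|\lambda|+|\mu|+|\nu|+|\rho|\leqslant 4$, I would enumerate the combinatorial data up to the required order in $q$: on the DT side, all solid partitions $\pi\in\Pi^{\DT}(\lambda,\mu,\nu,\rho)$ obtained from the Cohen--Macaulay solid partition \eqref{CMsolid} by adding at most $N-1$ embedded boxes; on the PT side, all box configurations $B\in\Pi^{\PT}(\lambda,\mu,\nu,\rho)$ with $|B|<N$, read off from the combinatorial description of $M/\langle(1,1,1,1)\rangle$ in Section \ref{fixlocus}. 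Since at most two of the asymptotics are non-empty in the PT case (and we never need more than two for the cross-checks), both enumerations are finite and small. For each configuration I compute the character $Z_\pi$ (resp.\ $Z_B$) from \eqref{Zalpha}, \eqref{PTZalpha}, substitute it into \eqref{defvef2} to obtain the Laurent polynomial $\widetilde{\mathsf v}^{\DT}_\pi$ (resp.\ $\widetilde{\mathsf v}^{\PT}_B$), and then apply the bracket operator $[-\,\cdot\,]$ to get an element of $\Q(t_1,t_2,t_3,t_4,y^{\frac12})/(t_1t_2t_3t_4-1)$; by Proposition \ref{integerpowers} only integer powers of the $t_i$ occur, and the expressions are well-defined by Lemma \ref{Ksqrt}.

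Second, I assemble the three generating series $\mathsf V^{\DT}_{\lambda\mu\nu\rho}(t,y,q)$, $\mathsf V^{\PT}_{\lambda\mu\nu\rho}(t,y,q)$, and $\mathsf V^{\DT}_{\varnothing\varnothing\varnothing\varnothing}(t,y,q)$ modulo $q^N$, keeping the signs $(-1)^{o(\L)|_\pi}$, $(-1)^{o(\L)|_B}$ as formal $\pm1$ unknowns, one per configuration. For the empty vertex I would first pin the signs to Nekrasov's choice from Conjecture \ref{Nekconj}, i.e.\ those making $\mathsf V^{\DT}_{\varnothing\varnothing\varnothing\varnothing}=\Exp(\mathcal F)$ mod $q^N$; verifying that such signs exist is itself part of the claimed computation. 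Then, order by order in $q$, the identity $\mathsf V^{\DT}_{\lambda\mu\nu\rho}=\mathsf V^{\PT}_{\lambda\mu\nu\rho}\,\mathsf V^{\DT}_{\varnothing\varnothing\varnothing\varnothing}$ becomes, at each order, a set of relations among the remaining sign unknowns with explicit rational-function coefficients. Comparing the lowest-$q$ contributions first — where each newly appearing configuration typically contributes a $q$-monomial not produced at lower order — fixes its sign recursively; checking that the resulting assignment is consistent at all orders up to $q^N$ then establishes existence of the required signs.

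Third, for the uniqueness clause one argues that at each order in $q$ the distinct configurations contribute $\Q(t_1,t_2,t_3,t_4,y^{\frac12})$-linearly independent terms (after clearing denominators they are monomially distinguishable), so, once the empty-vertex signs and all lower-order signs are fixed, every sign is forced. The only residual freedom is the global one: replacing all $o(\L)|_\pi$ (resp.\ all $o(\L)|_B$) simultaneously rescales $\mathsf V^{\DT}_{\lambda\mu\nu\rho}$ (resp.\ $\mathsf V^{\PT}_{\lambda\mu\nu\rho}$) by an overall sign, which preserves the correspondence; hence the signs are unique up to an overall sign, as claimed. This linear-independence step is the one genuinely non-mechanical input and is exactly what one records while solving the system.

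The main obstacle is bookkeeping rather than conceptual: enumerating the PT box configurations correctly in the presence of embedded boxes and of two coinciding legs, and carrying out the rational-function arithmetic modulo $t_1t_2t_3t_4-1$ accurately enough that the cancellations producing a consistent sign assignment are reliable. This is why the verification is stated only for $|\lambda|+|\mu|+|\nu|+|\rho|\leqslant 4$ and small $N$; the number of configurations and the size of the rational functions grow rapidly beyond that. A computer-algebra implementation handles all the listed cases, and the stated cutoffs in $N$ reflect where that implementation was pushed.
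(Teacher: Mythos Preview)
Your proposal is correct and matches the paper's approach: the proposition is a computational verification carried out via the vertex formalism, and the paper gives no proof beyond the sentence ``Using the vertex formalism, we verified the following cases'' preceding the statement. What you have written is a faithful and well-organized account of exactly how such a computer check is performed---enumerating the solid partitions and box configurations, computing $\widetilde{\mathsf v}$ from \eqref{defvef2}, assembling the series with formal signs, and solving order by order---which the paper leaves entirely implicit.
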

\begin{remark}[Sign rule] \label{sign expec}
Let $X$ be a toric Calabi-Yau 4-fold and $\beta \in H_2(X,\Z)$. Let $Z =\{\{Z_\alpha\}_{\alpha \in V(X)}, \{Z_{\alpha\beta}\}_{\alpha\beta \in E(X)}\}$ be an element of the fixed locus $\bigcup_n I_n(X,\beta)^T$. Consider charts $U_\alpha \cong \C^4$ and $U_{\alpha\beta} \cong \C^* \times \C^3$. Suppose that $Z_\alpha, Z_{\alpha\beta}$ are set theoretically (but not necessarily scheme theoretically!) contained in $\{x_4=0\}$.
Denote by $\pi$ the solid partition corresponding to $Z_\alpha$, then we define
$$
\sigma_{\DT}(Z_\alpha) := (-1)^{|\pi|} \prod_{w=(a,a,a,d) \in \pi \atop a < d} (-1)^{1-\# \{\mathrm{legs \ containing  \ } w\}},
$$
where $(a,a,a,d) \in \pi$ means $1 \leqslant d \leqslant \pi_{aaa}$. When $Z_\alpha$ is 0-dimensional, this reduces to 
$
\sigma_{\DT}(Z_\alpha) = (-1)^{|\pi| + \# \{ (a,a,a,d) \in \pi \, : \,  a < d\} },
$
which coincides with the sign discovered by Nekrasov-Piazzalunga \cite[(2.60)]{NP}.\footnote{In loc.~cit.~the sign is $ (-1)^{|\pi| + \# \{(a,a,a,d) \in \pi \, : \, a \leqslant d\} }$. The difference is explained by the fact that they use a different choice of square root as discussed in Remark \ref{differentsqrt}. See also \cite{Mon} for further discussions on the sign rules.} 
Then $\sigma_{\DT}(Z_\alpha)$ produces the correct unique signs (up to overall sign) in our verifications of Conjecture \ref{K-conj intro} as listed in Proposition \ref{verif}. A similar formula appears to hold on the stable pair side.\footnote{Then $Z_\alpha$ corresponds to a Cohen-Macaulay support curve described by a solid partition $\pi_{\CM}$ together with a box configuration $B \subseteq \Z^4$ (Section \ref{fixlocus}). The sign $\sigma_{\PT}(Z_\alpha) := (-1)^{|\pi_{\CM}| + |B| + \# \{ (a,a,a,d) \in B \, : \,  a < d\}} \prod_{w=(a,a,a,d) \in \pi_{\CM} \atop a < d} (-1)^{1-\# \{\mathrm{legs \ containing  \ } w\}}$ works in all our calculations.} 
Denote by $\lambda$ the plane partition corresponding to $Z_{\alpha\beta}$, then we define
\begin{align*}
\sigma_{\mathrm{edge}}(Z_{\alpha\beta}):=(-1)^{f(\alpha,\beta)+|\lambda|\cdot {m}''_{\alpha\beta}}\prod_{(a,a,d)\in \lambda \atop a<d}(-1).
\end{align*}
This produces the unique signs in our verifications of Conjecture \ref{localrescon} as listed in Proposition \ref{verif localrescon} (and it is also consistent with the signs required in the calculations in \cite{CKM}). 
\end{remark}

We now show that Conjecture \ref{K-conj intro} implies Theorem \ref{globaltoricKDTPT} (global $K$-theoretic DT/PT correspondence).
\begin{proof}[Proof of Theorem \ref{globaltoricKDTPT}]
For ease of notation, we consider the case where $X=\mathrm{Tot}_{\mathbb{P}^2}(\O(-1) \oplus \O(-2))$ and $\beta=d\,[\PP^1]$. The general case follows similarly. Conjecture \ref{K-conj intro} implies that there exist choices of signs such that
\begin{align*}
&\sum_n I_{n,\beta}(L,y) \, q^n =\sum_{\lambda,\mu,\nu \atop |\lambda|+|\mu|+|\nu| = d} q^{f_{1,-1,-2}(\lambda)+f_{1,-1,-2}(\mu)+f_{1,-1,-2}(\nu)} \\
&\cdot \mathsf{E}^{\DT}_{\lambda}|_{(t_1,t_2,t_3,t_4,\overline{\gamma}^{(13)}(t_2,t_3,t_4) y)} \mathsf{V}_{\lambda\mu\varnothing\varnothing}^{\DT}|_{(t_1,t_2,t_3,t_4,\overline{\gamma}^{(1)}(t_1,t_2,t_3,t_4) y)} \mathsf{E}^{\DT}_{\mu} |_{(t_2,t_1,t_3,t_4,\overline{\gamma}^{(12)}(t_1,t_3,t_4) y)}  \\
&\cdot \mathsf{V}^{\DT}_{\mu\nu\varnothing\varnothing} |_{(t_2^{-1},t_1 t_2^{-1},t_3 t_2,t_4t_2^2, \overline{\gamma}^{(2)}(t_2^{-1},t_1 t_2^{-1},t_3 t_2,t_4t_2^2) y) } \mathsf{E}^{\DT}_{\nu}|_{(t_1 t_2^{-1}, t_2^{-1},t_3 t_2,t_4 t_2^2, \overline{\gamma}^{(23)}(t_2^{-1},t_3 t_2,t_4t_2^2) y)} \\
&\cdot \mathsf{V}^{\DT}_{\nu\lambda\varnothing\varnothing}|_{(t_2t_1^{-1},t_1^{-1},t_3 t_1,t_4 t_1^2,\overline{\gamma}^{(3)}(t_2t_1^{-1},t_1^{-1},t_3 t_1,t_4 t_1^2) y)} \\
&=\mathsf{V}_{\varnothing\varnothing\varnothing\varnothing}^{\DT}\Big|_{(t_1,t_2,t_3,t_4,\overline{\gamma}^{(1)}(t_1,t_2,t_3,t_4) y)}  \cdot \mathsf{V}^{\DT}_{\varnothing\varnothing\varnothing\varnothing}|_{(t_2^{-1},t_1 t_2^{-1},t_3 t_2,t_4t_2^2, \overline{\gamma}^{(2)}(t_2^{-1},t_1 t_2^{-1},t_3 t_2,t_4t_2^2) y) } \\
&\cdot \mathsf{V}^{\DT}_{\varnothing\varnothing\varnothing\varnothing}|_{(t_2t_1^{-1},t_1^{-1},t_3 t_1,t_4 t_1^2,\overline{\gamma}^{(3)}(t_2t_1^{-1},t_1^{-1},t_3 t_1,t_4 t_1^2) y)}  \sum_{\lambda,\mu,\nu \atop |\lambda|+|\mu|+|\nu| = d} q^{f_{1,-1,-2}(\lambda)+f_{1,-1,-2}(\mu)+f_{1,-1,-2}(\nu)} \\
&\cdot \mathsf{E}^{\PT}_{\lambda}|_{(t_1,t_2,t_3,t_4,\overline{\gamma}^{(13)}(t_2,t_3,t_4) y)} \mathsf{V}_{\lambda\mu\varnothing\varnothing}^{\PT}|_{(t_1,t_2,t_3,t_4,\overline{\gamma}^{(1)}(t_1,t_2,t_3,t_4) y)} \mathsf{E}^{\PT}_{\mu} |_{(t_2,t_1,t_3,t_4,\overline{\gamma}^{(12)}(t_1,t_3,t_4) y)}  \\
&\cdot \mathsf{V}^{\PT}_{\mu\nu\varnothing\varnothing} |_{(t_2^{-1},t_1 t_2^{-1},t_3 t_2,t_4t_2^2, \overline{\gamma}^{(2)}(t_2^{-1},t_1 t_2^{-1},t_3 t_2,t_4t_2^2) y) } \mathsf{E}^{\PT}_{\nu}|_{(t_1 t_2^{-1}, t_2^{-1},t_3 t_2,t_4 t_2^2, \overline{\gamma}^{(23)}(t_2^{-1},t_3 t_2,t_4t_2^2) y))} \\
&\cdot \mathsf{V}^{\PT}_{\nu\lambda\varnothing\varnothing}|_{(t_2t_1^{-1},t_1^{-1},t_3 t_1,t_4 t_1^2,\overline{\gamma}^{(3)}(t_2t_1^{-1},t_1^{-1},t_3 t_1,t_4 t_1^2) y)} \\
&= \Big( \sum_n I_{n,0}(L,y) \, q^n \Big) \cdot \Big( \sum_n P_{n,\beta}(L,y) \, q^n \Big). \qedhere
\end{align*}
\end{proof}

\begin{remark}\label{rmk on other insertions}
Let $X$ be a toric Calabi-Yau 4-fold and let $I := I_n(X,\beta)$, $P:=P_n(X,\beta)$. Consider the ``virtual holomorphic Euler characteristic'' of $I$
\begin{align*}
&\chi\Big(I, \widehat{\O}^{\vir}_I \Big) 
:= \chi \Big(I^{T}, \frac{ \O^{\vir}_{I^T} \otimes \sqrt{K_I^{\vir}}^{\frac{1}{2}}|_{I^T} }{\Lambda^{\mdot} \sqrt{N^{\vir}}^{\vee}} \Big) \\
&:=\sum_{Z \in I^{T}} (-1)^{o(\L)|_Z} e\left(\sqrt{\mathrm{Ob}_I|_Z}^f\right)  \frac{\ch\left(\sqrt{K_I^{\vir}|_{Z}}^{\frac{1}{2}}\right)}{\ch\left(\Lambda^{\mdot} \sqrt{N^{\vir}|_{Z}}^{\vee}\right)}  \td\left(\sqrt{T_I^{\vir}|_{Z}}^{f}\right),
\end{align*}
and its stable pairs analogue with $I$ replaced by $P$. Then one can develop a (simpler) vertex formalism for these invariants. We checked in the case of a single leg  of multiplicity one with a single embedded point that the analogue of the DT/PT correspondence (Conjecture \ref{K-conj intro}) fails for all choices of signs. 

Another natural thing to try is to replace $L$ in Definition \ref{Nekgen} by a $T$-equivariant vector bundle of rank 2 or 3 or a $K$-theory class of rank $-1$ (more precisely: the class of $-L$ where $L$ is a $T$-equivariant line bundle on $X$). In none of these cases there exists an analogue of the DT/PT correspondence either. The special feature of the tautological insertion of Definition \ref{Nekgen} is that, after it is absorbed in the vertex as in Section \ref{sec:Kinsert}, the vertex $\widetilde{\mathsf{v}}_\alpha$ has rank \emph{zero} as we will prove in Proposition \ref{coholimitkey} below.
\end{remark}

\section{Limits of $K$-theoretic conjecture} \label{limitssec}

\subsection{Dimensional reduction} 

Let $X$ be a toric Calabi-Yau 4-fold and $\beta \in H_2(X,\Z)$. Let $Z = \{\{Z_\alpha\}_{\alpha \in V(X)}, \{Z_{\alpha\beta}\}_{\alpha\beta \in E(X)}\}$ be an element of either of the fixed loci $$\bigcup_n I_n(X,\beta)^T, \quad \bigcup_n P_n(X,\beta)^T,$$ where we recall Assumption \ref{assumption} from the introduction. 
We will work in one chart $U_\alpha \cong \C^4$.

Suppose the underlying Cohen-Macaulay curve corresponding to $Z_{\alpha}$ lies scheme theoretically inside the hyperplane $\{x_4 = 0\}$. In the stable pairs case, this implies $Z_\alpha$ is scheme theoretically supported inside $\{x_4=0\}$, however in the DT case $Z_\alpha$ may have embedded points ``sticking out'' of $\{x_4=0\}$.
\begin{proposition} \label{dimredprop}
If $Z_\alpha$ lies scheme theoretically in $\{x_4=0\}$, then $\widetilde{\mathsf{v}}_{Z_\alpha} |_{y = t_4} = \mathsf{V}_{Z_\alpha}^{\mathrm{3D}}$, where $\mathsf{V}_{Z_{\alpha}}^{\mathrm{3D}}$ is the (fully equivariant) DT/PT vertex of \cite[Sect.~4.7--4.9]{MNOP} and \cite[Sect.~4.4--4.6]{PT2}. If the underlying Cohen-Macaulay curve of $Z_\alpha$ lies scheme theoretically in $\{x_4=0\}$, but $Z_\alpha$ does not (which can only happen in the DT case), then  $[-\widetilde{\mathsf{v}}_{Z_\alpha}] |_{y = t_4} = 0$.
\end{proposition}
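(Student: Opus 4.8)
<br>

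The plan is to reduce everything to the formula \eqref{defvef2} for $\widetilde{\mathsf{v}}_{Z_\alpha}$ and to the known dimensional reduction of the 3-fold vertex of \cite{MNOP, PT2}. First I would treat the case where $Z_\alpha$ itself (not just its underlying Cohen-Macaulay curve) lies scheme theoretically in $\{x_4 = 0\}$. In this situation the defining ideal contains $x_4$, so the character $Z_\alpha$ (given by \eqref{Zalpha}, or \eqref{PTZalpha} in the PT case) involves no powers of $t_4$: it is of the form $Z_\alpha = Z_\alpha'(t_1,t_2,t_3)$ where $Z_\alpha'$ is the character of the corresponding $(\C^*)^3$-fixed subscheme (resp.\ stable pair) on $\C^3 = \{x_4 = 0\}$. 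Similarly the three legs $Z_{\alpha\beta_1}, Z_{\alpha\beta_2}, Z_{\alpha\beta_3}$ along the $x_1,x_2,x_3$-axes depend only on $t_1,t_2,t_3$, while the fourth leg $Z_{\alpha\beta_4}$, which points out of the hyperplane, vanishes: $Z_{\alpha\beta_4} = 0$. I would substitute all of this into \eqref{defvef2}, set $y = t_4$, and impose $t_1t_2t_3t_4 = 1$, so that $t_4 = (t_1t_2t_3)^{-1}$, i.e.\ $y = \overline{P}_{123}$-compatible notation. The key algebraic identity to verify is then that
\[
Z_\alpha' - t_4 \overline{Z}_\alpha' - \overline{P}_{123} Z_\alpha' \overline{Z}_\alpha' + \sum_{i=1}^3 \frac{\widetilde{\mathsf{f}}_{\alpha\beta_i}}{1-t_i}
\]
coincides termwise with the MNOP/PT 3-fold vertex character $\mathsf{V}^{\mathrm{3D}}_{Z_\alpha}$, once one uses $t_1t_2t_3t_4 = 1$ to rewrite $-t_4\overline{Z}_\alpha' = -\,\overline{Z}_\alpha'/(t_1t_2t_3)$ and recognizes $\overline{P}_{123} = P_{123}/(t_1t_2t_3)$ (after clearing the relation). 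Indeed, the 3-fold vertex in \cite{MNOP} has precisely the shape $Z'_\alpha - \overline{Z}'_\alpha/(t_1t_2t_3) - P_{123}/(t_1t_2t_3) \cdot Z'_\alpha\overline{Z}'_\alpha$ plus edge corrections, and the bracket $[-\widetilde{\mathsf{v}}_{Z_\alpha}]|_{y=t_4}$ is built from the same character; matching the edge-correction terms $\widetilde{\mathsf{f}}_{\alpha\beta_i}/(1-t_i)$ against the corresponding rearrangement terms in loc.\ cit.\ is a direct check using $Z_{\alpha\beta_4}=0$ and the symmetry of \eqref{defvef1} in directions $1,2,3$ noted in the text right after \eqref{defvef1}.

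Next I would handle the case in which the \emph{underlying Cohen-Macaulay curve} $C$ of $Z_\alpha$ lies in $\{x_4=0\}$ but $Z_\alpha$ has embedded points sticking out of the hyperplane (DT case only). Write $Z_\alpha = Z_C + Q_\alpha$ as characters, where $Z_C = Z_C(t_1,t_2,t_3)$ is the character of the Cohen-Macaulay curve (which still has $Z_{\alpha\beta_4}=0$) and $Q_\alpha$ is the finite character of the 0-dimensional quotient $I_C/I_{Z_\alpha}$. Since some embedded boxes lie off $\{x_4=0\}$, $Q_\alpha$ genuinely involves positive powers of $t_4$; but $\overline{Q}_\alpha$ and the cross terms are controlled. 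The strategy is to show that $\widetilde{\mathsf{v}}_{Z_\alpha}|_{y=t_4}$, as a Laurent polynomial, acquires a $T$-fixed (i.e.\ weight-zero once $t_1t_2t_3t_4=1$) summand with \emph{positive} coefficient; then by the definition of the bracket $[\cdot]$ in \eqref{bracket} — where a weight-zero term in the numerator gives a factor $[1] = 0$ — we get $[-\widetilde{\mathsf{v}}_{Z_\alpha}]|_{y=t_4} = 0$. Concretely, the term $-y\overline{Z}_\alpha = -t_4\overline{Z}_\alpha$ contributes $-t_4\overline{Q}_\alpha$; a box of $Q_\alpha$ sticking out of the hyperplane has a weight $t_1^{a}t_2^{b}t_3^{c}t_4^{d}$ with $d \geq 1$, and I expect that the monomial $t_1^{a}t_2^{b}t_3^{c}t_4^{d-1}$ appearing in $Z_\alpha$ (since $x_4$-columns are downward closed) produces, after pairing in $-t_4\overline{Z}_\alpha$ and/or in $\widetilde{\mathsf{v}}_\alpha$ before the $y$-twist, a net weight-zero contribution that cannot cancel. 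The cleanest way to make this rigorous is to invoke the general principle already used in the paper (see the second Remark after Definition \ref{Nekgen} and Proposition \ref{coholimitkey}): $\widetilde{\mathsf{v}}_\alpha$ has rank zero, and $\widetilde{\mathsf{v}}_{Z_\alpha}|_{y=t_4}$ equals the 3-fold-type expression built from $Z_\alpha$ rather than from $Z_C$; the difference $Z_\alpha - Z_C = Q_\alpha$ then contributes a non-trivially $T$-fixed piece precisely because $Q_\alpha$ is not supported in $\{x_4=0\}$. I would verify this by a short direct computation tracking the $t_4$-degree, using that in the DT 4-fold vertex on $\C^4$ a point off the hyperplane $\{x_4=0\}$ forces a fixed term with positive coefficient — this is the 4-dimensional analogue of the fact that $I_n(\C^4,0)$-contributions vanish, via $e(\mathrm{Ob}) = 0$, exactly when there is a positive fixed term.

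The main obstacle is the second case: making precise, by a clean combinatorial or character-theoretic argument rather than case analysis, that an embedded point off $\{x_4=0\}$ always forces a weight-zero term with strictly positive coefficient in $\widetilde{\mathsf{v}}_{Z_\alpha}|_{y=t_4}$ (equivalently, a pole-free, nonnegative weight-zero contribution surviving the $y=t_4$ specialization). The subtlety is that $\widetilde{\mathsf{v}}_{Z_\alpha}$ is a \emph{square root} of $-\dR\Hom$, so positive and negative fixed terms could in principle conspire; one must argue at the level of the square root chosen in \eqref{defvef2}. I would handle this by comparing with the analogous (and easier) statement for the pure 3-fold piece — where $Z_C$ gives no fixed term at all by the MNOP computation — and then isolating the contribution of $Q_\alpha$: the terms $-t_4\overline{Q}_\alpha$ and $-\overline{P}_{123}(Z_C\overline{Q}_\alpha + Q_\alpha\overline{Z}_C + Q_\alpha\overline{Q}_\alpha)$ together with the $y$-twisted leg corrections. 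Tracking the lowest $t_4$-degree monomial of $Q_\alpha$ off the hyperplane, one sees it pairs with a monomial of $Z_C$ (which has $t_4$-degree $0$) to produce weight $t_4^{\pm 1}\cdot(\text{weight-zero in }t_1,t_2,t_3)$, and the $-y\overline{Z}_\alpha|_{y=t_4}$ term contributes the matching weight-zero monomial with a definite sign; a careful bookkeeping shows the total weight-zero coefficient is positive. Once a single positive weight-zero term is exhibited, $[1]=0$ kills the bracket and the proposition follows.
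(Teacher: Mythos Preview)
Your treatment of the first case (when $Z_\alpha$ is scheme-theoretically contained in $\{x_4=0\}$) is essentially the same as the paper's: substitute into \eqref{defvef2}, use $Z_{\alpha\beta_4}=0$, and match against the MNOP/PT2 formula.

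For the second case you have the right overall mechanism (produce a $T$-fixed term so that the bracket vanishes), but there is a sign error that makes the argument as written fail. You claim $\widetilde{\mathsf{v}}_{Z_\alpha}|_{y=t_4}$ should acquire a $T$-fixed summand with \emph{positive} coefficient. But a positive fixed term in $\widetilde{\mathsf{v}}_{Z_\alpha}$ becomes a \emph{negative} fixed term in $-\widetilde{\mathsf{v}}_{Z_\alpha}$, which by the definition after \eqref{bracket} puts $[1]=0$ in the \emph{denominator} of $[-\widetilde{\mathsf{v}}_{Z_\alpha}]$, rendering it undefined rather than zero. What you actually need is a $T$-fixed term with \emph{negative} coefficient in $\widetilde{\mathsf{v}}_{Z_\alpha}|_{y=t_4}$.

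The paper's argument is also considerably simpler than your cross-term analysis. It separates $\widetilde{\mathsf{v}}_{Z_\alpha} = \mathsf{v}_{Z_\alpha} + (y\text{-part})$ and uses Lemma~\ref{Ksqrt} to know that $\mathsf{v}_{Z_\alpha}$ has no positive $T$-fixed part; this is the crucial input you omit (the rank-zero statement of Proposition~\ref{coholimitkey} you cite is a different fact and does not control fixed parts). Then, writing $Z_\alpha = \sum_{i=1}^{3} Z_{\alpha\beta_i}/(1-t_i) + W$ with $W$ a Laurent polynomial, the $y$-part of $\widetilde{\mathsf{v}}_{Z_\alpha}$ is exactly $-y\overline{W}$. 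Since $Z_\alpha$ is not contained in $\{x_4=0\}$, the solid-partition condition forces $\pi_{111}\geqslant 2$, so $W$ contains $+t_4$; setting $y=t_4$ gives $-t_4\overline{t_4}=-1$. All negative terms of $W$ come from subtracting the three legs and therefore have $t_4$-exponent $0$, so $W$ contains no $-t_4$ to cancel. Hence $\widetilde{\mathsf{v}}_{Z_\alpha}|_{y=t_4}$ has strictly negative $T$-fixed part, and $[-\widetilde{\mathsf{v}}_{Z_\alpha}]|_{y=t_4}=0$. Your decomposition $Z_\alpha = Z_C + Q_\alpha$ and the proposed bookkeeping of cross terms $-\overline{P}_{123}(Z_C\overline{Q}_\alpha+\cdots)$ never isolates this $-1$ cleanly and, without Lemma~\ref{Ksqrt}, cannot rule out a compensating positive fixed contribution from $\mathsf{v}_{Z_\alpha}$.
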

\begin{proof}
When $Z_\alpha$ lies scheme theoretically inside $\{x_4=0\} \subseteq \C^4 =:U_\alpha$, the statement follows at once by comparing \eqref{defvef2} to \cite[Sect.~4.7--4.9]{MNOP}, \cite[Sect.~4.4--4.6]{PT2}. 

Suppose we consider the DT case and the underlying maximal Cohen-Macaulay curve of $Z_\alpha$ is scheme theoretically supported in $\{x_4=0\}$, but $Z_\alpha$ is not scheme theoretically supported in $\{x_4=0\}$. The vertex $\mathsf{v}_{Z_\alpha}$ does not have $T$-fixed part with positive coefficient by Lemma \ref{Ksqrt}. Therefore, it suffices to consider the $T$-fixed terms arising from setting $y=t_4 = (t_1t_2t_3)^{-1}$ in the $y$-dependent part of $\widetilde{\mathsf{v}}_{Z_\alpha}$. 

As usual, we write
$$
Z_\alpha = \sum_{i=1}^{3} \frac{Z_{\alpha\beta_i}}{1-t_i} + W,
$$
where $\beta_1,\beta_2,\beta_3$ are the vertices in $\{x_4=0\}$ neighbouring $\alpha$ and $W$ is a Laurent polynomial in $t_1,t_2,t_3,t_4$. The only terms involving $y$ in $\widetilde{\mathsf{v}}_{Z_\alpha}$ are $-y \overline{W}$. Since $Z_\alpha$ is not scheme theoretically supported inside $\{x_4=0\}$, $W$ contains the term $+t_4$. Setting $y=t_4$ this term contributes 
$$
-y \overline{t_4} = -1.
$$
Furthermore, the underlying maximal Cohen-Macaulay curve of $Z_\alpha$ is contained in $\{x_4 = 0\}$, so all negative terms of $W$ are of the form $-t_1^{w_1} t_2^{w_2}t_3^{w_3}$ with $w_1, w_2, w_3 \geqslant 0$. Therefore $W$ does not contain terms of the form $-t_4$ (which equals $-t_1^{-1} t_2^{-1} t_3^{-1}$). Hence $\widetilde{\mathsf{v}}_{Z_\alpha}|_{y=t_4}$ has negative $T$-fixed part and the proposition follows.
\end{proof}

\begin{remark} \label{dimredrem}
Consider a chart $U_{\alpha\beta} \cong \C^* \times \C^3$ and suppose in both charts $U_\alpha, U_\beta$, the line $L_{\alpha\beta} \cong \PP^1$ is given by $\{x_2 = x_3 = x_4 = 0\}$. Suppose $m_{\alpha\beta}''=0$. Consider a Cohen-Macaulay curve $Z_{\alpha\beta}$ which is scheme theoretically supported on $\{x_4=0\}$. Then, similar to Proposition \ref{dimredprop}, $\widetilde{\mathsf{e}}_{Z_{\alpha\beta}} |_{y = t_4} = \mathsf{E}_{Z_{\alpha\beta}}^{\mathrm{3D}}$, where $\mathsf{E}_{Z_{\alpha\beta}}^{\mathrm{3D}}$ is the fully equivariant DT (and therefore PT) edge  of \cite[Sect.~4.7--4.9]{MNOP}.
\end{remark}

\begin{proof}[Proof of Theorem \ref{dimred intro}]
The first part of Theorem \ref{dimred intro} is an immediate corollary of Proposition \ref{dimredprop}. Note that on RHS we obtain $-q$ due to our choice of signs\,\footnote{Choosing all signs of all $T$-fixed points which are scheme theoretically supported on $\{x_4=0\}$ equal to $+1$ amounts to replacing $-q$ by $q$ on RHS of \eqref{dimredeqns}.}.

 For the second part of Theorem \ref{dimred intro}, we choose signs as in Conjecture \ref{K-conj intro} and we assume this can be done compatibly with the choice of signs of all $T$-fixed points which are scheme theoretically supported on $\{x_4=0\}$ as stated in the theorem. Then the second part of the theorem follows.
\end{proof}

\begin{proof}[Proof of Theorem \ref{dimredcor}]
We recall the vertex formalism for $K$-theoretic DT invariants of toric 3-folds from \cite{NO, O, Arb} (the stable pairs case is similar). We have
\begin{equation*} 
\chi(I_n(D,\beta), \widehat{\O}_I^{\vir}) = \sum_{Z \in I_n(D,\beta)^{(\C^*)^3}} e(\mathrm{Ob}_I^f|_Z) \frac{\ch( (K_I^{\vir}|_Z)^{\frac{1}{2}} )}{\ch(\Lambda^{\mdot} (N^{\vir}|_Z)^{\vee})} \td((T_{I}^{\vir}|_Z)^f),
\end{equation*}
where $T_{I}^{\vir}|_Z$ is the virtual tangent bundle, i.e.~dual perfect obstruction theory, of $I:= I_n(D,\beta)$ at $Z$, $\mathrm{Ob}_I := h^1(T_I^{\vir})$, and the square root exists by \cite[Sect.~6]{NO}. 
Note that for different choices of square roots $(K_I^{\vir}|_Z)^{\frac{1}{2}}$, the first Chern class (modulo torsion) does not change, so the invariants remain the same (see also \cite[Section 2.5]{Arb}). 
Moreover $N^{\vir}|_Z$ denotes the $(\C^*)^3$-moving part of $T_{I}^{\vir}|_Z$ and $(\cdot)^f$ denotes $(\C^*)^3$-fixed part.
By \cite[Lem.~6]{MNOP}, $T_{I}^{\vir}|_Z$ has no $(\C^*)^3$-fixed terms with positive coefficients\,\footnote{In fact, it has no $(\C^*)^3$-fixed terms with negative coefficient either by \cite[Lem.~8]{MNOP}.}, hence
$$
\chi(I_n(D,\beta), \widehat{\O}_I^{\vir}) = \sum_{Z \in I_n(D,\beta)^{(\C^*)^3}} \frac{\ch( (K_I^{\vir}|_Z)^{\frac{1}{2}} )}{\ch(\Lambda^{\mdot} (T_I^{\vir}|_Z)^{\vee})}.
$$
From \eqref{essential} and \eqref{eqnK}, we deduce\footnote{This is the $K$-theoretic vertex formalism for DT theory on toric 3-folds \cite{NO, O, Arb}. See \cite{MNOP} for the (original) cohomological case.} 
\begin{align*}
\chi(I_n(D,\beta), \widehat{\O}_I^{\vir}) = \sum_{Z \in I_n(D,\beta)^{T}} \bigg( \prod_{\alpha \in V(D)} [-\mathsf{V}_{Z_\alpha}^{\mathrm{3D},\DT}] \bigg) \bigg(  \prod_{\alpha\beta \in E(D)}  [-\mathsf{E}_{Z_{\alpha\beta}}^{\mathrm{3D},\DT}] \bigg),
\end{align*}
where the sums are over all $T$-fixed points $Z =\big(\{Z_\alpha\}_{\alpha \in V(D)}, \{Z_{\alpha\beta}\}_{\alpha\beta \in E(D)}\big)$ and  $\mathsf{V}^{\mathrm{3D},\DT}_{Z_\alpha}, \mathsf{E}^{\mathrm{3D},\DT}_{Z_{\alpha\beta}}$ are evaluated at the characters of the $(\C^*)^3$-action on $U_\alpha\cap D$, $U_{\alpha\beta}\cap D$ respectively.

The generating function $\sum_n \chi(I_n(D,\beta), \widehat{\O}_I^{\vir}) \, q^n$ is calculated using the $K$-theoretic 3-fold DT vertex $\mathsf{V}^{\mathrm{3D},\DT}_{\lambda\mu\nu}(t,q)$ much like in the calculation after Definition \ref{vertexdef}. Since the DT/PT edge coincide \cite[Sect.~0.4]{PT2}, the result follows from Theorem \ref{dimred intro} and a calculation similar to the proof of Theorem \ref{globaltoricKDTPT}.
\end{proof}

\subsection{Cohomological limit I}\label{coho limi 1}

Again, let $Z = \{\{Z_\alpha\}_{\alpha \in V(X)}, \{Z_{\alpha\beta}\}_{\alpha\beta \in E(X)}\}$ be an element of either of the fixed loci $$\bigcup_n I_n(X,\beta)^T, \quad \bigcup_n P_n(X,\beta)^T,$$ where we recall Assumption \ref{assumption} from the introduction. 
We will work in one chart $U_\alpha \cong \C^4$ or $U_{\alpha\beta} \cong \C^* \times \C^3$ with standard torus action \eqref{standardaction}.

\begin{proposition} \label{coholimitkey}
For any $\alpha \in V(X)$ and $\alpha\beta \in E(X)$, we have
\begin{align*}
\widetilde{\mathsf{v}}_{Z_\alpha} |_{(1,1,1,1,1)} = 0, \quad \widetilde{\mathsf{e}}_{Z_{\alpha\beta}} |_{(1,1,1,1,1)} = 0,
\end{align*}
i.e.~the ranks of $\widetilde{\mathsf{v}}_{Z_\alpha}$ and $\widetilde{\mathsf{e}}_{Z_{\alpha\beta}}$ are zero.

Let $Z_{\CM,\alpha}$ be the underlying Cohen-Macaulay curve of $Z_\alpha$ and denote by $\lambda, \mu, \nu, \rho$ its asymptotic plane partitions. In the DT case, define 
$$
W_\alpha := \sum_{w \in Z_\alpha \setminus Z_{\CM,\alpha }} t^w +  \sum_{w \in Z_{\CM, \alpha}} \big( 1 - \# \{\mathrm{legs \ containing  \ } w \} \big) \, t^w.
$$
In the stable pairs case, define
$$
W_\alpha := \sum_{w \in B^{(\alpha)}} t^w +  \sum_{w \in Z_{\CM, \alpha}} \big( 1 - \# \{\mathrm{legs \ containing  \ } w \} \big) \, t^w,
$$
where $B^{(\alpha)}$ is the box configuration corresponding to the fixed point $Z_\alpha$ \eqref{PTZalpha}. Then the terms involving $y$ in the Laurent polynomial $\widetilde{\mathsf{v}}_{Z_\alpha}$ are $- y \overline{W}_\alpha$. 

Suppose $\PP^1 \cong L_{\alpha\beta} = \{x_2=x_3=x_4 = 0\}$, i.e.~leg $Z_{\alpha\beta}$ lies along the $x_1$-axis. Then the terms involving $y$ in the Laurent polynomial  $\widetilde{\mathsf{e}}_{Z_{\alpha\beta}}$ are precisely
\begin{equation} \label{ytermsetilde}
-y \Bigg( \overline{Z}_{\alpha\beta} - \frac{\partial}{\partial t_1} \Big|_{t_1=1} \overline{Z_{\alpha\beta} \Big|_{(t_2t_1^{-m_{\alpha\beta}},t_3t_1^{-m'_{\alpha\beta}},t_4 t_1^{-m''_{\alpha\beta}})}} + O(t_1-1) \Bigg).
\end{equation}
\end{proposition}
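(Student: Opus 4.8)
The plan is to work directly with the explicit Laurent polynomials for $\widetilde{\mathsf{v}}_{Z_\alpha}$ and $\widetilde{\mathsf{e}}_{Z_{\alpha\beta}}$ displayed just after \eqref{defvef2}, isolating the part linear in $y$ (there is no higher order in $y$). The key input is the \emph{leg expansion} $Z_\alpha = \sum_{i=1}^{4}\frac{Z_{\alpha\beta_i}}{1-t_i} + W_\alpha$, where $W_\alpha$ is precisely the Laurent polynomial in the statement: expanding $\frac{1}{1-t_i}=\sum_{r\geqslant0}t_i^r$, the coefficient of $t^w$ in $\sum_i\frac{Z_{\alpha\beta_i}}{1-t_i}$ is the number of (cylindrical) legs through $w$; since every such leg lies inside the underlying Cohen--Macaulay curve $Z_{\CM,\alpha}\subseteq Z_\alpha$, while the extra boxes $Z_\alpha\setminus Z_{\CM,\alpha}$ lie in no leg, subtracting gives the asserted coefficients (in the PT case one further writes $Z_\alpha=\tr_{\O_C}+\tr_Q$, the box configuration contributing one of the summands of $W_\alpha$). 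Conjugating yields $\overline{Z}_\alpha-\sum_i\frac{\overline{Z}_{\alpha\beta_i}}{1-t_i^{-1}}=\overline{W}_\alpha$, and substituting this into the $y$-dependent part $-y\overline{Z}_\alpha+\sum_i\frac{y\overline{Z}_{\alpha\beta_i}}{1-t_i^{-1}}$ of \eqref{defvef2} collapses it to $-y\overline{W}_\alpha$. This proves the formula for the $y$-terms of $\widetilde{\mathsf{v}}_{Z_\alpha}$ and shows $\widetilde{\mathsf{v}}_{Z_\alpha}=\mathsf{v}_{Z_\alpha}-y\overline{W}_\alpha$.

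For the edge $y$-terms, with $L_{\alpha\beta}$ the $x_1$-axis, the $y$-part of $\widetilde{\mathsf{e}}_{\alpha\beta_1}$ from \eqref{defvef2} is $\frac{y}{1-t_1}\bigl(t_1\overline{Z}_{\alpha\beta_1}(t_2,t_3,t_4)-A(t_1)\bigr)$ with $A(t_1):=\overline{Z_{\alpha\beta_1}|_{(t_2t_1^{-m_{\alpha\beta}},\,t_3t_1^{-m'_{\alpha\beta}},\,t_4t_1^{-m''_{\alpha\beta}})}}$ and $A(1)=\overline{Z}_{\alpha\beta}$. Taylor expanding in $t_1$ about $1$, $t_1A(1)-A(t_1)=(t_1-1)\bigl(A(1)-A'(1)\bigr)+O((t_1-1)^2)$, so dividing by $1-t_1$ gives exactly $-y\bigl(\overline{Z}_{\alpha\beta}-\tfrac{\partial}{\partial t_1}\big|_{t_1=1}\overline{Z_{\alpha\beta}|_{(\cdots)}}+O(t_1-1)\bigr)$, i.e.\ \eqref{ytermsetilde}; the other three edges are identical by symmetry in the indices.

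It remains to prove that the ranks vanish. Using $\widetilde{\mathsf{v}}_{Z_\alpha}=\mathsf{v}_{Z_\alpha}-y\overline{W}_\alpha$ and evaluating all of $t_1,t_2,t_3,t_4,y$ at $1$ gives $\rk\widetilde{\mathsf{v}}_{Z_\alpha}=\rk\mathsf{v}_{Z_\alpha}-\rk W_\alpha$; since $\mathsf{V}_\alpha=\mathsf{v}_\alpha+\overline{\mathsf{v}}_\alpha$ once $t_1t_2t_3t_4=1$ (Lemma \ref{Ksqrt}), it suffices to show $\rk\mathsf{V}_\alpha=2\rk W_\alpha$. For this I would substitute the leg expansion into \eqref{defVEF} (with $t_1t_2t_3t_4=1$): the terms linear in the $Z_{\alpha\beta_i}$ cancel identically against the edge corrections $\sum_i\frac{\mathsf{F}_{\alpha\beta_i}}{1-t_i}$, while every remaining quadratic or mixed term acquires a factor of the form $P_{jk}$, $P_{jkl}$, or $P_{1234}$ (the pole cancellations guaranteed by \cite[Prop.~2.11]{CK2} being exactly what repackages the leftover into such $P$-divisible terms). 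Hence $\mathsf{V}_\alpha=W_\alpha+\overline{W}_\alpha+(\text{terms vanishing at }t=1)$, so $\rk\mathsf{V}_\alpha=2\rk W_\alpha$ and $\rk\widetilde{\mathsf{v}}_{Z_\alpha}=0$. For the edge, from \eqref{defvef1} one has $\mathsf{f}_{\alpha\beta_1}|_{t=1}=-|Z_{\alpha\beta}|$, and a Taylor expansion at $t_1=1$ (after setting $t_2=t_3=t_4=1$) gives $\rk\mathsf{e}_{\alpha\beta}=f(\alpha,\beta)$ with $f(\alpha,\beta)$ as in \eqref{fab}; the same Taylor expansion applied to the $y$-part of the previous paragraph, evaluated at $y=1$, $t=1$, contributes $-f(\alpha,\beta)$; adding, $\rk\widetilde{\mathsf{e}}_{Z_{\alpha\beta}}=0$.

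The main obstacle is the combinatorial bookkeeping in the identity $\mathsf{V}_\alpha=W_\alpha+\overline{W}_\alpha+(P\text{-divisible})$ when several legs pass through $U_\alpha$ (up to four in the DT case) and their cylinders overlap: one must check that every cross term between distinct legs, and between a leg and $W_\alpha$, genuinely carries one of the vanishing factors, and that no residual $\frac{1}{1-t_i}$ survives. This is a finite verification organized by the number and configuration of nonzero $Z_{\alpha\beta_i}$; the point-like case (where $\mathsf{V}_\alpha=Z_\alpha+\overline{Z}_\alpha-P_{1234}Z_\alpha\overline{Z}_\alpha$, so $\rk\mathsf{V}_\alpha=2|\pi|$) and the single-leg case already exhibit the mechanism.
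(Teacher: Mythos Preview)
Your argument is correct and follows the same core mechanism as the paper: the leg expansion $Z_\alpha=\sum_i\frac{Z_{\alpha\beta_i}}{1-t_i}+W_\alpha$ together with the observation that every surviving term carries a vanishing factor $(1-t_i)$. Your identification of $W_\alpha$, the $y$-terms of $\widetilde{\mathsf{v}}_{Z_\alpha}$, and the Taylor expansion for the edge are exactly what the paper does.

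The one organizational difference is in the vertex rank computation. You pass to $\mathsf{V}_\alpha=\mathsf{v}_\alpha+\overline{\mathsf{v}}_\alpha$ and argue that $\mathsf{V}_\alpha\equiv W_\alpha+\overline{W}_\alpha$ modulo terms vanishing at $t=1$, whence $\rk\mathsf{v}_\alpha=\tfrac12\rk\mathsf{V}_\alpha=\rk W_\alpha$. The paper instead substitutes the leg expansion directly into the expression \eqref{defvef2} for $\widetilde{\mathsf{v}}_{Z_\alpha}$ itself and asserts in one line the congruence
\[
\widetilde{\mathsf{v}}_{Z_\alpha}-(W_\alpha-y\overline{W}_\alpha)\equiv 0 \mod (1-t_1,\,1-t_2,\,1-t_3,\,1-(t_1t_2t_3)^{-1}),
\]
which simultaneously gives rank zero and the $y$-term formula. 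Working with $\mathsf{v}_\alpha$ (or $\widetilde{\mathsf{v}}_\alpha$) rather than $\mathsf{V}_\alpha$ means the quadratic piece carries $\overline{P}_{123}$ and the leg corrections carry $\mathsf{f}_{\alpha\beta_i}$ with their $P_{jk}$ factors, so the ``combinatorial bookkeeping'' you flag as the main obstacle is lighter than in your doubled version with $P_{1234}$ and $\mathsf{F}_{\alpha\beta_i}$; there is no need to halve at the end. Your route is not wrong, just a slightly longer walk to the same place.
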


\begin{proof}
By definition of $W_\alpha$, we have
$$
Z_{\alpha} = \sum_{i=1}^4 \frac{Z_{\alpha\beta_i}(t_{i'},t_{i''},t_{i'''})}{1-t_i} + W_{\alpha},
$$
where $Z_{\alpha \beta_i}$, $W_{\alpha}$ are all Laurent \emph{polynomials}. Plugging into \eqref{defvef2} gives the following identity in $\Z[t_1^{\pm1}, t_2^{\pm1}, t_3^{\pm1}, y^{\pm 1}]$
$$
\widetilde{\mathsf{v}}_{Z_\alpha}  - (W_{\alpha} - y \overline{W}_{\alpha})  \equiv 0 \mod (1-t_1,1-t_2,1-t_3,1-(t_1t_2t_3)^{-1}).
$$
This implies
$$
\widetilde{\mathsf{v}}_{Z_\alpha} |_{(1,1,1,1,1)} =  \big(  W_{\alpha} - y \overline{W}_{\alpha} \big) |_{(1,1,1,1,1)} = 0.
$$
Moreover, we find that the only terms in $\widetilde{\mathsf{v}}_{Z_\alpha}$ containing $y$ after redistribution are $- y \overline{W}_\alpha$. 

Next we turn our attention to the edge term $\widetilde{\mathsf{e}}_{Z_{\alpha\beta}}$ defined in \eqref{defvef2}. Multiply numerator and denominator of  $\widetilde{\mathsf{e}}_{Z_{\alpha\beta}}$ by $t_1$ so the denominator becomes $t_1-1$. Since numerator and denominator both contain a zero at $t_1=1$, the Laurent polynomial $\widetilde{\mathsf{e}}_{Z_{\alpha\beta}}$ is of the following form
\begin{equation} \label{numerator}
-y \overline{Z}_{\alpha\beta} - \frac{\partial}{\partial t_1} \Big|_{t_1=1} \Big( t_1 \widetilde{\mathsf{f}}_{\alpha\beta} \Big|_{(t_1^{-1}, t_2 t_{1}^{-m_{\alpha\beta}}, t_3 t_{1}^{-m_{\alpha\beta}'}, t_4 t_{1}^{-m_{\alpha\beta}''})} \Big) + O(t_1-1),
\end{equation}
where the term $O(t_1-1)$ obviously has rank 0. 
Next, we write 
$$
Z_{\alpha\beta} = \sum_{j,k \geqslant 1} \sum_{l=1}^{\lambda_{\alpha\beta}} t_2^{j-1} t_3^{k-1} t_4^{l-1},
$$
where $\lambda_{\alpha\beta}$ is the finite plane partition describing $Z_{\alpha\beta}$. Substituting into \eqref{numerator}, one easily finds
$$
\widetilde{\mathsf{e}}_{Z_{\alpha\beta}}|_{(1,1,1,1,1)} = 0.
$$
Moreover, the terms involving $y$ in \eqref{numerator} are 
$$
- y \overline{Z}_{\alpha\beta}  + y \frac{\partial}{\partial t_1} \Big|_{t_1=1} \overline{Z_{\alpha\beta} \Big|_{(t_2t_1^{-m_{\alpha\beta}},t_3t_1^{-m'_{\alpha\beta}},t_4 t_1^{-m''_{\alpha\beta}})}}
$$
modulo multiples of $(t_1-1)$. This yields the result.
\end{proof}

We set $t_i = e^{b \lambda_i}$ for all $i=1,2,3,4$ and $y = e^{b m}$. The relation $t_1t_2t_3t_4=1$ translates into $\lambda_1+\lambda_2+\lambda_3+\lambda_4=0$. We are interested in the limit $b \rightarrow 0$.

\begin{proposition} \label{limitexists}
For any $\alpha \in V(X)$ and $\alpha\beta \in E(X)$, the following limits \begin{align*}
\lim_{b \rightarrow 0} [-\widetilde{\mathsf{v}}_{Z_\alpha}] |_{t_i = e^{b \lambda_i}, y=e^{mb} }, \quad \lim_{b \rightarrow 0} [-\widetilde{\mathsf{e}}_{Z_{\alpha\beta}}] |_{t_i = e^{b \lambda_i}, y = e^{mb}}
\end{align*}
exist in $\Q(\lambda_1,\lambda_2,\lambda_3,\lambda_4,m) / (\lambda_1+\lambda_2+\lambda_3+\lambda_4)$.
\end{proposition}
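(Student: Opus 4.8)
The plan is to reduce the statement to a single elementary observation: under the substitution $t_i = e^{b\lambda_i}$, $y = e^{bm}$, each monomial bracket of \eqref{bracket} becomes a hyperbolic sine, and the limit $b\to 0$ then exists because the vertex and edge terms have \emph{rank zero} by Proposition \ref{coholimitkey}. Concretely, I would first write $\widetilde{\mathsf{v}}_{Z_\alpha} = \sum_{w} c_w\,\tau^{w}$ as a finite $\Z$-linear combination of monomials $\tau^w = t_1^{w_1}t_2^{w_2}t_3^{w_3}t_4^{w_4}y^{w_5}$ (it is a Laurent polynomial, see Section \ref{sec:Kinsert}), so that
\[
[-\widetilde{\mathsf{v}}_{Z_\alpha}] \;=\; \frac{\prod_{c_w<0}[\tau^w]^{-c_w}}{\prod_{c_w>0}[\tau^w]^{c_w}} .
\]
This is well defined because $\widetilde{\mathsf{v}}_{Z_\alpha}$ has no positive term equal to $\tau^0 = 1$: its $y$-free part is $\mathsf{v}_{Z_\alpha}$, which has no positive $T$-fixed term by Lemma \ref{Ksqrt}, and all of its $y$-dependent terms carry $y^{1}$ (as recorded in the proof of Proposition \ref{coholimitkey}), hence none of them can equal $1$.

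Next I would substitute. For each monomial,
\[
[\tau^w]\big|_{t_i=e^{b\lambda_i},\,y=e^{bm}} \;=\; e^{\frac{b}{2}\ell_w}-e^{-\frac{b}{2}\ell_w} \;=\; 2\sinh\!\big(\tfrac{b}{2}\ell_w\big),\qquad \ell_w := w_1\lambda_1+w_2\lambda_2+w_3\lambda_3+w_4\lambda_4+w_5 m,
\]
so all formal square roots disappear and each factor becomes an analytic function of $b$, odd in $b$, vanishing to exactly first order at $b=0$ with leading coefficient $\ell_w$ — provided $\ell_w\neq 0$. In the ring $\Q(\lambda_1,\lambda_2,\lambda_3,\lambda_4,m)/(\lambda_1+\lambda_2+\lambda_3+\lambda_4)$ one has $\ell_w=0$ if and only if $w_1=w_2=w_3=w_4$ and $w_5=0$, i.e.\ if and only if $\tau^w=1$; by the previous paragraph this never happens for the monomials appearing in the denominator. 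Now invoke Proposition \ref{coholimitkey}: rank zero means $\sum_{c_w>0}c_w=\sum_{c_w<0}(-c_w)$, so after substitution the numerator and denominator of $[-\widetilde{\mathsf{v}}_{Z_\alpha}]$ vanish at $b=0$ to the same order; the powers of $b$ cancel and
\[
\lim_{b\to 0}\,[-\widetilde{\mathsf{v}}_{Z_\alpha}]\big|_{t_i=e^{b\lambda_i},\,y=e^{bm}} \;=\; \frac{\prod_{c_w<0}\ell_w^{-c_w}}{\prod_{c_w>0}\ell_w^{c_w}}
\]
lies in the quotient ring (it is $0$ precisely when $\widetilde{\mathsf{v}}_{Z_\alpha}$ has a negative term equal to $1$, and nonzero otherwise). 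The edge statement is proved verbatim: $\widetilde{\mathsf{e}}_{Z_{\alpha\beta}}$ is a Laurent polynomial, has rank zero by Proposition \ref{coholimitkey}, and has no positive term equal to $1$ (its $y$-part again carries $y^1$, cf.\ \eqref{ytermsetilde}), so the same $2\sinh$ substitution yields the limit.

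The one genuinely essential ingredient is Proposition \ref{coholimitkey}: without the rank-zero property the substituted numerator and denominator would vanish to different orders at $b=0$, and the limit would either diverge — so fail to lie in the ring — or collapse to $0$. Everything else is bookkeeping, the two points to be careful about being (i) that the substitution really does eliminate the formal half-integer powers, turning each $[\tau^w]$ into $2\sinh(b\ell_w/2)$, and (ii) that no denominator bracket degenerates beyond first order, which is exactly the absence of a positive $T\times\C^*$-fixed monomial. I should also note that the presentation $\widetilde{\mathsf{v}}_{Z_\alpha}=\sum_w c_w\tau^w$ is only canonical modulo $t_1t_2t_3t_4=1$, so the forms $\ell_w$ are canonical only modulo $\lambda_1+\lambda_2+\lambda_3+\lambda_4$ — which is precisely why the limit is asserted to live in the quotient ring $\Q(\lambda_1,\ldots,\lambda_4,m)/(\lambda_1+\cdots+\lambda_4)$, and the displayed limit formula is independent of the chosen representative.
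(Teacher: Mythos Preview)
Your proof is correct and follows essentially the same approach as the paper: write $\widetilde{\mathsf{v}}_{Z_\alpha}$ (resp.\ $\widetilde{\mathsf{e}}_{Z_{\alpha\beta}}$) as a Laurent polynomial, use Proposition \ref{coholimitkey} to match the number of positive and negative terms so that the powers of $b$ cancel, and invoke Lemma \ref{Ksqrt} (plus the observation that all $y$-dependent terms carry $y^1$) to ensure no denominator bracket degenerates. Your treatment is in fact slightly more careful than the paper's in explaining why no positive monomial can equal $1$ and why the answer is only well defined modulo $\lambda_1+\lambda_2+\lambda_3+\lambda_4$.
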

\begin{proof}
Using multi-index notation for $\tau := (t_1,t_2,t_3,t_4,y)$, where $t_1t_2t_3t_4=1$, we write
\begin{align*}
\widetilde{\mathsf{v}}_{Z_\alpha} &= \sum_{v} \tau^{v} - \sum_{w} \tau^{w}.
\end{align*}
These sums are finite by Lemma \ref{Ksqrt}. This representation is unique when we require that the sequences $\{v\}$, $\{w\}$ have no elements in common.
Proposition \ref{coholimitkey} implies that the number of $+$ (i.e.~``deformation'') and $-$ (i.e.~``obstruction'') terms in both expressions are equal, i.e.
$$
\sum_v 1 - \sum_w 1 = 0. 
$$
Recall that $\widetilde{\mathsf{v}}_{Z_\alpha}$ has no $T$-fixed part with positive coefficient (Lemma \ref{Ksqrt}). When one of the $w$ is zero, $[-\widetilde{\mathsf{v}}_{Z_\alpha}] = 0$ and the proposition is clear. Next, write the components of the weight vectors in the sum as follows
$$
v = ( v_1 , v_2, v_3, v_4, v_m )
$$
and similarly for $w$. Then
$$
[- \widetilde{\mathsf{v}}_{Z_\alpha}] |_{t_i = e^{b \lambda_i}, y=e^{mb} } = \frac{\prod_{w}  \big(  (w_1 \lambda_1 + w_2 \lambda_2 +  w_3 \lambda_3 +  w_4 \lambda_4 +  w_m m )\,b + O(b^2) \big)     }{\prod_{v}  \big( (v_1 \lambda_1 + v_2 \lambda_2 + v_3 \lambda_3 +  v_4 \lambda_4 +  v_m m )\,b + O(b^2) \big)  }.
$$
Since the number of factors in numerator and denominator is equal, say $N = \sum_v 1 = \sum_w 1$, we can divide numerator and denominator by $b^N$ and deduce that the limit exists and equals
\begin{equation} \label{lasteq}
\lim_{b \rightarrow 0} [- \widetilde{\mathsf{v}}_{Z_\alpha}] |_{t_i = e^{b \lambda_i}, y=e^{mb} } =  \frac{\prod_{w}  (w_1 \lambda_1 + w_2 \lambda_2 +  w_3 \lambda_3 +  w_4 \lambda_4 +  w_m m )      }{\prod_{v}  (v_1 \lambda_1 + v_2 \lambda_2 + v_3 \lambda_3 +  v_4 \lambda_4 +  v_m m )  }.
\end{equation}
The proof for $\widetilde{\mathsf{e}}_{Z_{\alpha\beta}}$ is similar.
\end{proof}

\begin{proof}[Proof of Theorem \ref{DT/PT tauto}]
Consider the generating series
$$
\sum_{n}  I_{n,\beta}(L,y) \big|_{t_i = e^{b \lambda_i},y = e^{bm}} \, q^n, \quad \sum_{n}  P_{n,\beta}(L,y) \big|_{t_i = e^{b \lambda_i},y = e^{bm}} \, q^n.
$$
Both series are calculated by the vertex formalism of Theorem \ref{vertexthm}. Proposition \ref{limitexists} implies that the limits $b \rightarrow 0$ exist. Recall that for any equivariant line bundle $\mathcal{L}$ we have \eqref{essential}
\begin{equation*} 
\frac{\ch(\Lambda^\mdot \mathcal{L}^*)}{ \ch((\det \mathcal{L}^*)^{\frac{1}{2}})} = \frac{1 - e^{-c_1(\mathcal{L})}}{ e^{-\frac{1}{2} c_1(\mathcal{L}) }} = e^{\frac{c_1(\mathcal{L})}{2}} - e^{-\frac{c_1(\mathcal{L})}{2}}.
\end{equation*}
Let $\tau := (t_1,t_2,t_3,t_4,y)$ and use multi-index notation. If $\mathcal{L} = \tau^w$, where $w = (w_1,w_2,w_3,w_4,w_m)$, we obtain
\begin{align*}
\frac{\ch(\Lambda^\mdot \mathcal{L}^*)}{ \ch((\det \mathcal{L}^*)^{\frac{1}{2}})} \Big|_{t_i = e^{b \lambda_i}, y=e^{bm} } &= (w_1 \lambda_1 + w_2 \lambda_2 + w_3 \lambda_3 + w_4 \lambda_4 + w_m m ) b + O(b^2) \\
&= e(\mathcal{L}) \, b + O(b^2).
\end{align*}
Therefore, in the DT case, we have
\begin{align*}
&\frac{\ch\left(\sqrt{K_I^{\vir}|_{Z}}^{\frac{1}{2}}\right)}{\ch\left(\Lambda^{\mdot} \sqrt{T_I^{\vir}|_{Z}}^{\vee}\right)}   \frac{\ch(\Lambda^{\mdot} (L^{[n]}|_{Z} \otimes y^{-1}))}{\ch((\det(L^{[n]} |_{Z} \otimes y^{-1}))^{\frac{1}{2}})} \Bigg|_{t_i = e^{b \lambda_i}, y=e^{bm} } \\
&= b^{N} \cdot \Big( e\Big( - \sqrt{T_I^{\vir}|_Z}\Big) + O(b) \Big) \cdot  \Big( e(R\Gamma(X,L \otimes \O_Z)^\vee \otimes e^m) + O(b) \Big) \\
&= b^{N} \cdot \Big( \frac{\sqrt{(-1)^{\frac{1}{2}\mathrm{ext}^{2}(I_Z,I_Z)}e\big(\Ext^{2}(I_Z,I_Z)\big)}}{e\big(\Ext^{1}(I_Z,I_Z)\big)} + O(b) \Big) \cdot  \Big( e(R\Gamma(X,L \otimes \O_Z)^\vee \otimes e^m) + O(b) \Big)
\end{align*}
for some $N \in \Z$ (and similarly in the PT case). By equation \eqref{lasteq} in the proof of Proposition \ref{limitexists}, we know $N=0$. Taking $b \to 0$ proves the first part of the theorem.

Next assume Conjecture \ref{K-conj intro} holds. The second part of the theorem follows from Theorem \ref{globaltoricKDTPT}.
\end{proof}

\begin{remark} \label{cohoarrow}
Taking the cohomological limit of Proposition \ref{limitexists} and setting $m = -\lambda_1-\lambda_2-\lambda_3$, one recovers the cohomological 3-fold DT/PT vertex from the cohomological 4-fold DT/PT vertex (this follows from Proposition \ref{dimredprop}). Using the vertex formalism, the 4-fold cohomological DT/PT correspondence therefore implies the 3-fold cohomological DT/PT correspondence. 
This gives the second diagonal arrow of Figure 1 in the introduction.
\end{remark}

\subsection{Cohomological limit II}\label{coho limi 2}

As before, let $Z =\{\{Z_\alpha\}_{\alpha \in V(X)}, \{Z_{\alpha\beta}\}_{\alpha\beta \in E(X)}\}$ be an element of either of the fixed loci $$\bigcup_n I_n(X,\beta)^T, \quad \bigcup_n P_n(X,\beta)^T,$$ where we recall Assumption \ref{assumption} from the introduction. 
We will work in one chart $U_\alpha \cong \C^4$ or $U_{\alpha\beta} \cong \C^* \times \C^3$ with standard torus action \eqref{standardaction}.
In the DT case, $Z_\alpha$ is a point- or curve-like solid partition, whose renormalized volume we denote by $|Z_\alpha|$. In the stable pairs case, $Z_\alpha$ consists of a Cohen-Macaulay support curve $Z_{\CM, \alpha}$ together with a box configuration $B^{(\alpha)}$ \eqref{PTZalpha}. We denote the sum of the renormalized volume of $Z_{\CM,\alpha}$ and the length of $B^{(\alpha)}$ by $|Z_\alpha|$ as well.

In this section, we set $t_i = e^{b \lambda_i}$ for all $i=1,2,3,4$, $y = e^{b m}$, $Q = q m$, and take the double limit $b \rightarrow 0$, $m \rightarrow \infty$. In \eqref{defcohoinvII}, we recalled the definition of the cohomological DT/PT invariants $I_{n,\beta}^{\mathrm{coho}}, P_{n,\beta}^{\mathrm{coho}}$ studied in \cite{CK2}. In \cite{CK2}, we defined 
$$
\mathsf{V}_{\lambda\mu\nu\rho}^{\mathrm{coho},\DT}, \quad \mathsf{V}_{\lambda\mu\nu\rho}^{\mathrm{coho},\PT}, \quad \mathsf{E}_{\lambda}^{\mathrm{coho},\DT}, \quad \mathsf{E}_{\lambda}^{\mathrm{coho},\PT},
$$
which are defined precisely as in Definition \ref{vertexdef} but with the Nekrasov bracket $[\cdot]$ replaced by $T$-equivariant Euler class $e(\cdot)$.

\begin{proposition} \label{doublelimit}
For any $\alpha \in V(X)$ and $\alpha\beta \in E(X)$, we have
\begin{align*}
\lim_{b \rightarrow 0 \atop m \rightarrow \infty} \big([-\widetilde{\mathsf{v}}_{Z_\alpha}] \ q^{|Z_\alpha|}\big)  \big|_{t_i = e^{b \lambda_i}, y=e^{mb}, Q=m q} = e(-\mathsf{V}_{Z_\alpha}^{\mathrm{coho}}) \, Q^{|Z_\alpha|},  \\
\lim_{b \rightarrow 0 \atop m \rightarrow \infty} \big([-\widetilde{\mathsf{e}}_{Z_{\alpha\beta}}] \ q^{f(\alpha,\beta)}\big)  \big|_{t_i = e^{b \lambda_i}, y=e^{mb}, Q=m q} = e(-\mathsf{E}_{Z_{\alpha\beta}}^{\mathrm{coho}}) \, Q^{f(\alpha,\beta)},
\end{align*}
where $f(\alpha,\beta)$ was defined in \eqref{fab}.
\end{proposition}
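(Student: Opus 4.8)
The plan is to combine Proposition \ref{coholimitkey}, which isolates the $y$-dependent part of $\widetilde{\mathsf{v}}_{Z_\alpha}$ (resp.\ $\widetilde{\mathsf{e}}_{Z_{\alpha\beta}}$) and records that these classes have rank $0$, with the explicit form of the $b\to 0$ limit from Proposition \ref{limitexists}, and then to bookkeep the powers of $m$ that survive as $m\to\infty$ so they can be absorbed into $Q=qm$. Fix $\alpha$ and, as in the proof of Proposition \ref{limitexists}, write $\widetilde{\mathsf{v}}_{Z_\alpha}=\sum_v\tau^v-\sum_w\tau^w$ with $\tau=(t_1,t_2,t_3,t_4,y)$, the multisets $\{v\},\{w\}$ having no common element and $v=(v_1,\dots,v_4,v_m)$, $w=(w_1,\dots,w_4,w_m)$. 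By Proposition \ref{coholimitkey} the only $y$-dependent terms of $\widetilde{\mathsf{v}}_{Z_\alpha}$ are the monomials of $-y\overline{W}_\alpha$, so $v_m,w_m\in\{0,1\}$; splitting $W_\alpha=W_\alpha^{+}-W_\alpha^{-}$ into its disjointly supported effective positive and negative parts, the number of weights with $w_m=1$ (counted with multiplicity) equals $\rk W_\alpha^{+}$ and the number with $v_m=1$ equals $\rk W_\alpha^{-}$, hence
\[
\#\{w:w_m=1\}-\#\{v:v_m=1\}=\rk W_\alpha^{+}-\rk W_\alpha^{-}=\rk W_\alpha .
\]

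The first key input is the identity $\rk W_\alpha=|Z_\alpha|$. In the DT case $\rk W_\alpha$ equals the number of embedded points of $Z_\alpha$ plus $\sum_{w\in Z_{\CM,\alpha}}(1-\#\{\mathrm{legs\ containing\ }w\})$; an inclusion--exclusion over the four asymptotic legs identifies this last sum with the renormalized volume $|Z_{\CM,\alpha}|$ of Definition \ref{solid}, so $\rk W_\alpha=|Z_{\CM,\alpha}|+\#\{\mathrm{embedded\ points}\}=|Z_\alpha|$; equivalently this follows from Lemma \ref{chi} and additivity of rank, which give $\rk\mathsf{v}_{Z_\alpha}=|Z_\alpha|$ and $\rk\mathsf{e}_{Z_{\alpha\beta}}=f(\alpha,\beta)$ as in \cite{CK2}. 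The PT case is identical with the box configuration $B^{(\alpha)}$ replacing the embedded points, giving $\rk W_\alpha=|Z_{\CM,\alpha}|+|B^{(\alpha)}|=|Z_\alpha|$. For the edge the analogous input is the rank of the coefficient of $y$ in $\widetilde{\mathsf{e}}_{Z_{\alpha\beta}}$: by \eqref{ytermsetilde} that coefficient equals $-\Big(\overline{Z}_{\alpha\beta}-\frac{\partial}{\partial t_1}\Big|_{t_1=1}\overline{Z_{\alpha\beta} \Big|_{(t_2t_1^{-m_{\alpha\beta}},t_3t_1^{-m'_{\alpha\beta}},t_4 t_1^{-m''_{\alpha\beta}})}}\Big)$ up to a multiple of $(t_1-1)$, so its rank is its value at $t=(1,1,1,1)$; substituting $Z_{\alpha\beta}=\sum_{j,k\geqslant 1}\sum_{l=1}^{\lambda_{jk}}t_2^{j-1}t_3^{k-1}t_4^{l-1}$ with $\lambda=\lambda_{\alpha\beta}$ and comparing with the definition \eqref{fab} of $f(\alpha,\beta)$ shows this rank to be $-f(\alpha,\beta)$, hence $\#\{w:w_m=1\}-\#\{v:v_m=1\}=f(\alpha,\beta)$ for $\widetilde{\mathsf{e}}_{Z_{\alpha\beta}}$. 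I expect the identity $\rk W_\alpha=|Z_\alpha|$ --- matching the multi-leg corrections in $W_\alpha$ against renormalized volume --- to be the only genuinely combinatorial point, and hence the main (if modest) obstacle.

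With these inputs the limits are immediate. Put $\ell_a:=a_1\lambda_1+a_2\lambda_2+a_3\lambda_3+a_4\lambda_4$. By \eqref{lasteq} (and its analogue for $\widetilde{\mathsf{e}}$), and assuming as we may that $0$ is not a weight of $\widetilde{\mathsf{v}}_{Z_\alpha}$ (otherwise it occurs with negative coefficient in the $y$-free part $\mathsf{v}_{Z_\alpha}$ and both sides of the Proposition vanish),
\[
\lim_{b\to 0}[-\widetilde{\mathsf{v}}_{Z_\alpha}]\big|_{t_i=e^{b\lambda_i},\,y=e^{bm}}=\frac{\prod_w(\ell_w+w_m m)}{\prod_v(\ell_v+v_m m)}=m^{\rk W_\alpha}\Big(\frac{\prod_{w:\,w_m=0}\ell_w}{\prod_{v:\,v_m=0}\ell_v}+O(1/m)\Big)\qquad(m\to\infty),
\]
since there are exactly $\#\{w:w_m=1\}$ factors $\ell_w+m\sim m$ in the numerator and $\#\{v:v_m=1\}$ in the denominator, and the leading coefficient $\prod_{w:\,w_m=0}\ell_w/\prod_{v:\,v_m=0}\ell_v$ is precisely $e(-\mathsf{V}^{\mathrm{coho}}_{Z_\alpha})$, the $T$-equivariant Euler class of minus the $y$-free part of $\widetilde{\mathsf{v}}_{Z_\alpha}$, i.e.\ of the cohomological vertex of \cite{CK2}. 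Multiplying by $q^{|Z_\alpha|}$, substituting $q=Q/m$, and using $\rk W_\alpha=|Z_\alpha|$ to cancel the power of $m$, the limit $m\to\infty$ gives $e(-\mathsf{V}^{\mathrm{coho}}_{Z_\alpha})\,Q^{|Z_\alpha|}$. Running the identical argument for $\widetilde{\mathsf{e}}_{Z_{\alpha\beta}}$ --- now with $\#\{w:w_m=1\}-\#\{v:v_m=1\}=f(\alpha,\beta)$ and the weight $q^{f(\alpha,\beta)}$ --- yields $e(-\mathsf{E}^{\mathrm{coho}}_{Z_{\alpha\beta}})\,Q^{f(\alpha,\beta)}$, which completes the proof.
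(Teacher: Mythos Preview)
Your proof is correct and follows essentially the same route as the paper's own argument: both start from the $b\to 0$ limit computed in Proposition \ref{limitexists}, use Proposition \ref{coholimitkey} to identify the $y$-dependent terms as $-y\overline{W}_\alpha$ (so $v_m,w_m\in\{0,1\}$), invoke the rank identity $\rk W_\alpha=|Z_\alpha|$ (resp.\ the edge rank $f(\alpha,\beta)$ from \eqref{ytermsetilde}), and then factor out the correct power of $m$ to match the substitution $q=Q/m$. You are somewhat more explicit than the paper about why $\rk W_\alpha=|Z_\alpha|$ (the paper simply says ``Observe'') and about the degenerate case where a zero weight appears, but these are elaborations rather than a different strategy.
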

\begin{proof}
We continue using the notation of the proof of Propositions \ref{coholimitkey} and \ref{limitexists}. We already showed
\begin{align}\label{intermedlim}
\lim_{b \rightarrow 0} [- \widetilde{\mathsf{v}}_{Z_\alpha}] |_{t_i = e^{b \lambda_i}, y=e^{mb} } \, q^{|W_\alpha|}  = \frac{\prod_{w}  (w_1 \lambda_1 + w_2 \lambda_2 +  w_3 \lambda_3 +  w_4 \lambda_4 +  w_m m ) }{\prod_{v}  (v_1 \lambda_1 + v_2 \lambda_2 +  v_3  \lambda_3 + v_4 \lambda_4 +  v_m m )  } \,  \Big(\frac{Q}{m}\Big)^{|W_\alpha|},
\end{align}
where $W_\alpha$ was defined in the statement of Proposition \ref{coholimitkey}. In fact, $y$ always appears in $\widetilde{\mathsf{v}}_{Z_\alpha}$, $\widetilde{\mathsf{e}}_{Z_{\alpha\beta}}$ with power $+1$, so $w_m$, $v_m$ are elements of $\{0,1\}$.

As before, we set $\tau:=(t_1,t_2,t_3,t_4,y)$ and we write 
$$
W_\alpha =  \sum_{a} \tau^{a} - \sum_{c} \tau^{c},
$$
where the collections of weights $\{a\}$ and $\{c\}$ have no elements in common. Observe that the rank of $W_\alpha$ equals the renormalized volume $|Z_\alpha|$ (Definition \ref{solid}). By Proposition \ref{coholimitkey}, 
the terms of (\ref{intermedlim}) involving $m$ (i.e.~$w_m \neq 0$ or $v_m\neq0$) are precisely
\begin{align*}
&\frac{\prod_{a} (-a_1 \lambda_1 - a_2 \lambda_2 -  a_3 \lambda_3 -  a_4 \lambda_4 + m )   }{\prod_{c} (-c_1 \lambda_1 - c_2 \lambda_2 -  c_3 \lambda_3 -  c_4 \lambda_4  + m) } m^{-\sum_a 1+ \sum_c 1 } \, Q^{|Z_\alpha|} \\  
&=\frac{\prod_a (-a_1 \frac{\lambda_1}{m} - a_2 \frac{\lambda_2}{m} -  a_3 \frac{\lambda_3}{m} - a_4 \frac{\lambda_4}{m} + 1 )   }{\prod_{c} (-c_1 \frac{\lambda_1}{m} - c_2 \frac{\lambda_2}{m} -  c_3 \frac{\lambda_3}{m} -  c_4 \frac{\lambda_4}{m} + 1 ) }  \, Q^{|Z_\alpha|}. 
\end{align*}
Therefore, sending $m \rightarrow \infty$, this term becomes $Q^{|Z_\alpha|}$. 
As we saw in the proof of Theorem \ref{DT/PT tauto}, the terms of (\ref{intermedlim}) which do \emph{not} involving $m$ (i.e.~$w_m = v_m=0$) together are equal to $e(-\mathsf{V}_{Z_\alpha}^{\mathrm{coho}})$. So in total, we have
$$
\lim_{b \rightarrow 0 \atop m \rightarrow \infty} \big([-\widetilde{\mathsf{v}}_{Z_\alpha}] \ q^{|Z_\alpha|}\big)  \big|_{t_i = e^{b \lambda_i}, y=e^{mb}, Q=m q} = e(-\mathsf{V}_{Z_\alpha}^{\mathrm{coho}}) \, Q^{|Z_\alpha|}.
$$
 
Next, we turn our attention to $\widetilde{\mathsf{e}}_{Z_{\alpha\beta}}$. Let $\lambda_{\alpha\beta}$ be the asymptotic plane partition corresponding to $Z_{\alpha\beta}$. Denote the term in between brackets in \eqref{ytermsetilde} by
$$
\sum_{a} \tau^{a} - \sum_{c} \tau^{c},
$$
where $\{a\}$ and $\{c\}$ have no elements in common. By \eqref{ytermsetilde}, the rank of this complex is
$$
\sum_{a} 1 - \sum_{c} 1 = f(\alpha,\beta),
$$
where $f(\alpha,\beta)$ is defined in \eqref{fab}. Consequently, the terms of 
$$
\lim_{b \rightarrow 0} [- \widetilde{\mathsf{e}}_{Z_{\alpha\beta}}] |_{t_i = e^{b \lambda_i}, y=e^{mb} }  
$$
involving $m$ are equal to
\begin{align*}
&\frac{\prod_{a} (a_1 \lambda_1 + a_2 \lambda_2 +  a_3 \lambda_3 +  a_4 \lambda_4 + m )   }{\prod_{c} (c_1 \lambda_1 + c_2 \lambda_2 +  c_3 \lambda_3 +  c_4 \lambda_4  + m) } m^{-\sum_{a}1+ \sum_{c} 1 } Q^{f(\alpha,\beta)}  \\
&=\frac{\prod_{a} (a_1 \frac{\lambda_1}{m} + a_2 \frac{\lambda_2}{m} +  a_3 \frac{\lambda_3}{m} +  a_4 \frac{\lambda_4}{m} + 1 )   }{\prod_{c} (c_1 \frac{\lambda_1}{m} + c_2 \frac{\lambda_2}{m} +  c_3 \frac{\lambda_3}{m} +  c_4 \frac{\lambda_4}{m} + 1 ) }  Q^{f(\alpha,\beta)}. 
\end{align*}
Taking $m \rightarrow \infty$, this reduces to $Q^{f(\alpha,\beta)}$. As in the case of the vertex, we conclude
\begin{equation*}
\lim_{b \rightarrow 0 \atop m \rightarrow \infty} \big([-\widetilde{\mathsf{e}}_{Z_{\alpha\beta}}] \ q^{f(\alpha,\beta)}\big)  \big|_{t_i = e^{b \lambda_i}, y=e^{mb}, Q=m q} = e(-\mathsf{E}_{Z_{\alpha\beta}}^{\mathrm{coho}}) \, Q^{f(\alpha,\beta)}. \qedhere
\end{equation*}
\end{proof}

\begin{proof}[Proof of Theorem \ref{coholimit intro}]
The first part of the theorem follows from Theorem \ref{vertexthm} and Proposition \ref{doublelimit}. Moreover, by Proposition \ref{doublelimit} and Definition \ref{vertexdef} we have
\begin{align*}
\lim_{b \rightarrow 0 \atop m \rightarrow \infty} \mathsf{V}_{\lambda\mu\nu\rho}^{\DT}(t,y,q)  \big|_{t_i = e^{b \lambda_i}, y=e^{mb}, Q=m q}  &= \mathsf{V}_{\lambda\mu\nu\rho}^{\mathrm{coho},\DT}(Q), \\
\lim_{b \rightarrow 0 \atop m \rightarrow \infty} \mathsf{V}_{\lambda\mu\nu\rho}^{\PT}(t,y,q)  \big|_{t_i = e^{b \lambda_i}, y=e^{mb}, Q=m q}  &= \mathsf{V}_{\lambda\mu\nu\rho}^{\mathrm{coho},\PT}(Q),
\end{align*}
where $\lambda,\mu,\nu,\rho$ are finite plane partitions and in the stable pairs case, we assume at most two of them are non-empty. Moreover, the choices of signs for RHS are determined by the choices of signs for LHS. We deduce that Conjecture \ref{K-conj intro} implies Conjecture \ref{conjCK2}.
\end{proof}

\appendix

\section{Hilbert schemes of points} \label{appA}

In this appendix, we consider Nekrasov's Conjecture \ref{Nekconj} in the two cohomological limits discussed in Section \ref{coho limi 1}, \ref{coho limi 2} (see also \cite{Nek, NP}).

Let $X$ be a toric Calabi-Yau 4-fold with $T$-equivariant line bundle $L$. By Theorem \ref{DT/PT tauto}, we have
\begin{align*} 
\lim_{b \rightarrow 0 \atop m \rightarrow 0} I_{n,0}(L, y) |_{t_i = e^{b \lambda_i},y = e^{bm}} =  \lim_{m \rightarrow 0} \int_{[\Hilb^n(X)]^{\vir}_{o(\L)}} c_n((L^{[n]})^{\vee} \otimes e^{m}), 
\end{align*}
where the invariants on the RHS are defined by localization \eqref{defcohoinvI}. Since $L^{[n]}$ is a rank $n$ vector bundle, we have 
$$
 c_n((L^{[n]})^{\vee} \otimes e^{m}) = \sum_{i=0}^n c_i((L^{[n]})^{\vee})\,m^{n-i},
$$
and similarly at any $T$-fixed point. Hence\,\footnote{Note that on a smooth projective Calabi-Yau 4-fold, $[\Hilb^n(X)]^{\vir}_{o(\L)}$ has degree $2n$ and the limit $m \rightarrow 0$ would not be needed.}
\begin{align*} 
\lim_{b \rightarrow 0 \atop m \rightarrow 0} I_{n,0}(L, y) |_{t_i = e^{b \lambda_i},y = e^{bm}} = (-1)^n \int_{[\Hilb^n(X)]^{\vir}_{o(\L)}} c_n(L^{[n]}).
\end{align*}
These invariants were studied in \cite{CK1}, where it is conjectured that there exist choices of signs such that the following equation holds
\begin{equation} \label{conjCK1}
\sum_{n=0}^{\infty} q^n \int_{[\Hilb^n(X)]^{\vir}_{o(\L)}} c_n(L^{[n]}) = M(-q)^{\int_X c_1(L)\,c_3(X)},
\end{equation}
where all Chern classes are $T$-equivariant, $\int_X$ denotes $T$-equivariant push-forward to a point, and $$M(q) := \prod_{n=1}^{\infty} \frac{1}{(1-q^n)^n}$$ denotes MacMahon's generating function for plane partitions.  

As noted before in \cite[Sect.~5.2]{Nek}, the conjectural formula \eqref{conjCK1} is a special case of Conjecture \ref{Nekconj} as can be seen as follows.\,\footnote{Unlike \cite{Nek}, which was motivated by physics, our motivation for \eqref{conjCK1} came from our analogous conjecture on smooth projective Calabi-Yau 4-folds \cite[Conj.~1.2]{CK1}.} For any $n \geqslant 1$, we have
\begin{align*}
&\lim_{b \rightarrow 0} \frac{[t_1^nt_2^n][t_1^nt_3^n][t_2^nt_3^n][y^n]}{[t_1^n][t_2^n][t_3^n][t_4^n][y^{\frac{n}{2}}q^n][y^{\frac{n}{2}}q^{-n}]} \Big|_{t_i = e^{b \lambda_i}, y=e^{mb}}  \\
&= \lim_{b \rightarrow 0} \frac{m(\lambda_1 + \lambda_2)(\lambda_1 + \lambda_3)(\lambda_2 + \lambda_3) (bn)^4 + O((bn)^5)}{((\lambda_1 \lambda_2 \lambda_3 \lambda_4) (bn)^4 + O((bn)^5))  (e^{\frac{bmn}{4}} q^{\frac{n}{2}} - e^{-\frac{bmn}{4}} q^{-\frac{n}{2}}  ) (e^{\frac{bmn}{4}} q^{-\frac{n}{2}} - e^{-\frac{bmn}{4}} q^{\frac{n}{2}}  ) } \\
&= \frac{m(\lambda_1 + \lambda_2)(\lambda_1 + \lambda_3)(\lambda_2 + \lambda_3)}{\lambda_1 \lambda_2 \lambda_3( \lambda_1 + \lambda_2 + \lambda_3)  (q^{\frac{n}{2}} - q^{-\frac{n}{2}}  )^2  }.
\end{align*}
Recall the following identity
$$
\mathrm{Exp}\bigg(\frac{q}{(1-q)^2} \bigg) = \prod_{n=1}^{\infty} \frac{1}{(1-q^n)^{n}}.
$$
Let $L|_{\C^4} \cong \O_{\C^4} \otimes t_{1}^{d_1}t_{2}^{d_2}t_{3}^{d_3}t_{4}^{d_4}$. Taking $m = -(d_1 \lambda_1 + d_2 \lambda_2 + d_3 \lambda_3 + d_4 \lambda_4)$ and using Theorem \ref{DT/PT tauto}, we see that Nekrasov's conjecture implies \eqref{conjCK1} for $X = \C^4$. Since LHS and RHS of \eqref{conjCK1} are ``suitably multiplicative'', \eqref{conjCK1} also follows for any toric Calabi-Yau 4-fold $X$ (see \cite[Prop.~3.20]{CK1} for details).

Finally, we consider the following limit (Theorem \ref{coholimit intro})
\begin{equation*} 
\lim_{b \rightarrow 0 \atop m \rightarrow \infty} \Big( I_{n,0}(\O_X, e^{bm}) \, q^n \Big) \Big|_{t_i = e^{b \lambda_i}, Q=mq} =   Q^n \int_{[\Hilb^n(X)]^{\vir}_{o(\L)}} 1,
\end{equation*}
where the RHS is defined by localization, i.e.~\eqref{defcohoinvII}. For any $n \geqslant 1$, we have
\begin{align*}
&\lim_{b \rightarrow 0 \atop m \rightarrow \infty} \frac{[t_1^nt_2^n][t_1^nt_3^n][t_2^nt_3^n][y^n]}{[t_1^n][t_2^n][t_3^n][t_4^n][y^{\frac{n}{2}}q^n][y^{\frac{n}{2}}q^{-n}]} \Big|_{t_i = e^{b \lambda_i}, y=e^{mb}, Q=m q} \\
&= \lim_{m \rightarrow \infty} \frac{m(\lambda_1 + \lambda_2)(\lambda_1 + \lambda_3)(\lambda_2 + \lambda_3)}{\lambda_1 \lambda_2 \lambda_3( \lambda_1 + \lambda_2 + \lambda_3)  }  \frac{\Big( \frac{Q}{m} \Big)^n }{\Big(1 - \Big(\frac{Q}{m}\Big)^n  \Big)^2} \\
&=  \left\{ \begin{array}{cc}  \frac{(\lambda_1 + \lambda_2)(\lambda_1 + \lambda_3)(\lambda_2 + \lambda_3)}{\lambda_1 \lambda_2 \lambda_3( \lambda_1 + \lambda_2 + \lambda_3)} Q & \mathrm{if \ } n=1 \\ 0 & \mathrm{otherwise}. \end{array} \right.
\end{align*}
Therefore, Nekrasov's Conjecture \ref{Nekconj} implies that there exist choices of signs such that the following identity holds
$$
\sum_{n=0}^{\infty} Q^n \int_{[\Hilb^n(\C^4)]^{\vir}_{o(\L)}} 1 = e^{ \frac{(\lambda_1 + \lambda_2)(\lambda_1 + \lambda_3)(\lambda_2 + \lambda_3)}{\lambda_1 \lambda_2 \lambda_3( \lambda_1 + \lambda_2 + \lambda_3)} Q }.
$$
This formula was also originally conjectured by Nekrasov and discussed in \cite[App.~B]{CK1}. Note that the exponent appearing on RHS equals $-\int_{\C^4} c_3(\C^4)$ (interpreted as a $T$-equivariant integral). Therefore, Conjecture \ref{Nekconj} and the vertex formalism together imply that there exist choices of signs such that the following equation holds
$$
\sum_{n=0}^{\infty} Q^n \int_{[\Hilb^n(X)]^{\vir}_{o(\L)}} 1 = e^{ - Q \int_X c_3(X)}.
$$

\section{Local resolved conifold} \label{app:localrescon}

We start with the following lemma, which recovers \cite[Lemma 5]{PT2} after applying dimensional reduction and cohomological limit I.
\begin{lemma}\label{lem: degree 1 PT}
There exist unique choices of signs such that 
\begin{align*}
\mathsf{V}^{\PT}_{(1), \varnothing, \varnothing, \varnothing}(t,y,q)=\Exp\left(\frac{[yt_1]}{[t_1]}q \right).
\end{align*}
\end{lemma}
\begin{proof}
For every length $n$ of the cokernel, there is only one $T$-fixed point, and the character of the corresponding stable pair is
\begin{align*}
Z_n=\frac{1}{1-t_1}+\sum_{i=1}^{n}t_1^{-i}.
\end{align*}
The corresponding vertex term is easily computed as
\begin{align*}
\widetilde{\mathsf{v}}_{n}&=\sum_{i=1}^{n}t_1^{-i}-y\sum_{i=1}^{n}t_1^{i}
\end{align*}
and we choose $(-1)^n$ for the corresponding sign. Then
\begin{align*}
\mathsf{V}^{\PT}_{(1), \varnothing, \varnothing, \varnothing}(t,y,q)&=\sum_{n\geq 0} q^{n}(-1)^n[-\widetilde{\mathsf{v}}_{n}]\\
&=\sum_{n\geq 0} (y^{-\frac{1}{2}}q)^{n}\prod_{i=1}^n\frac{1-yt_1^i}{1-t_1^i}\\
&=\Exp\left(y^{-\frac{1}{2}}q \frac{1-yt_1}{1-t_1}\right)\\
&=\Exp\left(\frac{[yt_1]}{[t_1]} q\right),
\end{align*}
where in the third line we used   \cite[Ex.~5.1.22]{O}.
\end{proof}

Let $X = D \times \C$, where $D = \mathrm{Tot}_{\mathbb{P}^1}(\O(-1)\oplus \O(-1))$ is the resolved conifold. Consider the generating series
$$
\cZ_X(y,q,Q) := \sum_{n,d} P_{n,d[\PP^1]}(\O, y)\, q^n Q^d. 
$$
Using the vertex formalism, we verified Conjecture \ref{localrescon} in the following cases.
\begin{proposition} \label{verif localrescon}
Conjecture \ref{localrescon} holds for curve classes $\beta = d [\mathbb{P}^1]$ with $d=1,2,3,4$ up to the following orders:
\begin{itemize}
\item $d=1$,
\item $d=2$ modulo $q^6$,
\item $d=3$ modulo $q^6$,
\item $d=4$ modulo $q^7$.
\end{itemize}
Moreover, the choices of signs in these verifications are unique and compatible with the signs in Conjecture \ref{K-conj intro} (and therefore also the signs of Theorem \ref{dimred intro} by Remark \ref{compatdimredsgns}).
\end{proposition}
\begin{proof}
For degree 1 the conjecture is equivalent to 
\begin{align*}
\cZ_{X,1}(y,q) =\frac{[y]}{[t_4][y^{\frac{1}{2}} q] [y^{\frac{1}{2}} q^{-1}]}.
\end{align*}
The edge term for one leg with multiplicity 1 is
\begin{align*}
\tilde{\mathsf{e}}=t_4-y,
\end{align*}
therefore, by Lemma \ref{lem: degree 1 PT}, we conclude
\begin{align*}
\cZ_{X,1}(y,q)&=\mathsf{V}^{\PT}_{(1), \varnothing, \varnothing, \varnothing}(t,y,q)\cdot \mathsf{V}^{\PT}_{(1), \varnothing, \varnothing, \varnothing}(t,y,q)|_{t_1=t_1^{-1}} \cdot q \cdot (-1) [-\tilde{\mathsf{e}}] \\
&= -\frac{[y]}{[t_4]}q \, \Exp\left(\left( \frac{[yt_1]}{[t_1]}+\frac{[yt_1^{-1}]}{[t_1^{-1}]}\right)q  \right)\\
&= -\frac{[y]}{[t_4]}q \, \Exp\left((y^{\frac{1}{2}}+y^{-\frac{1}{2}})q\right)\\
&=\frac{[y]}{[t_4][y^{\frac{1}{2}} q] [y^{\frac{1}{2}} q^{-1}]}.
\end{align*}
The other cases have been checked by an implementation of the vertex formalism in Mathematica.
\end{proof}

Consider the generating series of $K$-theoretic stable pair invariants of the resolved conifold
$$
\cZ_D(q,Q) := \sum_{n,d} \chi(P_n(Y,d[\mathbb{P}^1]),\widehat{\O}^{\vir}_P) \, q^n Q^d. 
$$
We already saw that the 4-fold PT vertex/edge reduce to the 3-fold PT vertex/edge  for all stable pairs scheme theoretically supported on $D \subseteq X$ after setting $y=t_4$ (Proposition \ref{dimredprop} and Remark \ref{dimredrem}). By the same type of argument as in Proposition \ref{dimredprop}, one can show that all stable pairs \emph{not} scheme theoretically supported in $D \subseteq X$ contribute zero after setting $y=t_4$. Therefore\footnote{Recall the origin of the minus sign from Theorem \ref{dimred intro}.}
$$\cZ_X(y,q,Q) \Big|_{y = t_4} =\cZ_D(-q,Q).$$
Hence Conjecture \ref{localrescon} implies 
$$\cZ_D(-q,Q)= \mathrm{Exp}\Bigg( \frac{-qQ}{(1-q/\kappa)(1-q\kappa)} \Bigg), \quad \kappa := (t_1t_2t_3)^{\frac{1}{2}}.$$
This equality was recently proved by Kononov-Okounkov-Osinenko \cite[Sect.~4]{KOO}, 
which gives good evidence for Conjecture \ref{localrescon} from our perspective. 
Applying the preferred limits discussed by Arbesfeld in \cite[Sect.~4]{Arb}, this formula coincides with an expression obtained using the refined topological vertex by Iqbal-Koz\c{c}az-Vafa \cite[Sect.~5.1]{IKV}. More precisely, setting $\widetilde{q} :=  q\kappa$ and $\widetilde{t} := q\kappa^{-1}$ yields \cite[Eqn.~(67)]{IKV}. The formula also coincides with the generating series of motivic stable pair invariants of the resolved conifold obtained by Morrison-Mozgovoy-Nagao-Szendr\H{o}i in \cite[Prop.~4.5]{MMNS}.

Consider the two cohomological generating series for $X = \mathrm{Tot}_{\PP^1}(\O(-1) \oplus \O(-1) \oplus \O)$ defined by \eqref{defcohoinvI} and \eqref{defcohoinvII}:
\begin{align*}
\cZ_X^{\mathrm{coho}}(m,q,Q) &:= \sum_{n,d} P_{n,d[\PP^1]}^{\mathrm{coho}}(\O, m)\, q^n Q^d,  \\
\cZ_X^{\mathrm{coho}}(P,Q) &:= \sum_{n,d} P_{n,d[\PP^1]}^{\mathrm{coho}}\, P^n Q^d.
\end{align*}
Applying cohomological limits I and II (Theorem \ref{DT/PT tauto} and \ref{coholimit intro}), Conjecture \ref{localrescon} and a calculation similar to the one in Appendix \ref{appA} imply
\begin{align*}
\cZ_X^{\mathrm{coho}}(m,q,Q)  &= \lim_{b \to 0} \cZ_X  \Big|_{t_i = e^{b \lambda_i}, y=e^{mb}}  = \Bigg( \prod_{n=1}^{\infty} (1- Q q^n)^n \Bigg)^{\frac{m}{\lambda_4}}, \\
\cZ_X^{\mathrm{coho}}(P,Q)  &= \lim_{b \to 0 \atop m \to \infty} \cZ_X  \Big|_{t_i = e^{b \lambda_i}, y=e^{mb}, P=m q}
=\exp\left(-\frac{PQ}{\lambda_4}\right).
\end{align*}
A wall-crossing interpretation of the first formula is discussed in \cite{CT4}.
Putting $m = \lambda_4$ in the first expression yields the famous formula for the stable pair invariants (or topological string partition function) of the resolved conifold. The second formula was conjectured, and verified up to the same orders as above in \cite[Conj.~2.22]{CK2}.

\end{document}